\newtheorem{thm}{Theorem}[section]
\newtheorem{mainthm}{Theorem}
\newtheorem{lemma}[thm]{Lemma}
\newtheorem{claim}{Claim}[thm]
\newtheorem{fact}[thm]{Fact}
\newtheorem{corollary}[thm]{Corollary}
\newtheorem{proposition}[thm]{Proposition}
\theoremstyle{definition}
\newtheorem{defn}[thm]{Definition}
\newtheorem{q}{Question}
\theoremstyle{remark}
\newtheorem{remark}[thm]{Remark}
\renewcommand{\mid}{\mathrel{|}\allowbreak}
\renewcommand{\restriction}{\mathbin\upharpoonright}
\DeclareMathOperator{\ord}{On}
\DeclareMathOperator{\acc}{acc}
\DeclareMathOperator{\id}{id}
\DeclareMathOperator{\cf}{cf}
\DeclareMathOperator{\club}{Club}
\DeclareMathOperator{\dom}{dom}
\DeclareMathOperator{\h}{ht}
\DeclareMathOperator{\ult}{Ult}
\DeclareMathOperator{\ind}{ind}
\DeclareMathOperator{\add}{\mathrm{Add}}
\DeclareMathOperator{\I}{\mathbb I}
\newcommand\s{\subseteq}
\newcommand*\axiomfont[1]{\textsf{\textup{#1}}}
\newcommand\zfc{\axiomfont{ZFC}}
\newcommand\gch{\axiomfont{GCH}}
\begin{document}
\title[Ketonen's question]{Ketonen's question and other cardinal sins}

\author{Assaf Rinot}
\address{Department of Mathematics, Bar-Ilan University, Ramat-Gan 5290002, Israel.}
\urladdr{http://www.assafrinot.com}

\author[Zhixing You]{Zhixing You}
\address{Department of Mathematics, Bar-Ilan University, Ramat-Gan 5290002, Israel}
\email{zhixingy121@gmail.com}

\author[Jiachen Yuan]{Jiachen Yuan}
\address{School of Mathematics, University of Leeds, Leeds LS2 9JT, UK}
\email{yuanjachen@gmail.com}

\date{Preprint as of November 27, 2025. For updates, visit \textsf{http://p.assafrinot.com/69}.}

\begin{abstract}
Answering a question of Ketonen from the late 1970's,
it is proved that a weakly compact cardinal carrying an indecomposable ultrafilter need not be measurable.

The result is obtained by analyzing the limit of a decreasing sequence of models of $\zfc$.
The utility of this proof technique is demonstrated further in this paper,
where a problem by Bagaria and Magidor concerning strong compactness,
and a problem by Lambie-Hanson and Rinot concerning the $C$-sequence number are solved as well.
\end{abstract}
\subjclass[2010]{03E05, 03E35, 03E55}
\keywords{Trees with ascent path, commutative projection system, $C$-sequence number, $\lambda$-strong compactness, intersection model}
\maketitle

\section{Introduction}
\subsection{Compactness principles}
A fundamental theme in set theory is the investigation of compactness
principles of uncountable cardinals.
This includes classical large cardinal notions such as the following four (where arrow denotes logical implication):
$$\text{supercompact}\rightarrow\text{strongly compact}\rightarrow\text{measurable}\rightarrow\text{weakly compact},$$
as well as finer compactness principles which we will be discussing momentarily.
A milestone result in this vein is Magidor's \emph{identity crisis} theorem \cite{MR429566}
asserting that the least strongly compact cardinal may be as small as the first measurable cardinal
and as large as the first supercompact cardinal.

Our first example of a refined compactness principle is a weakening of measurability.
Recall that an uncountable cardinal $\kappa$ is \emph{measurable} iff there exists a uniform $\kappa$-complete ultrafilter $U$ over $\kappa$.
Note that in this case, for every function $f:\kappa\rightarrow\mu$ with $\mu$ a cardinal smaller than $\kappa$,
there exists a set $X\in U$ on which $f$ is constant.
Following Keisler (see \cite{MR0214474}) and Prikry \cite{prikrythesis}, an ultrafilter $U$ over $\kappa$ is \emph{indecomposable}
iff for every function $f:\kappa\rightarrow\mu$ with $\mu<\kappa$,
there exists a set $X\in U$ on which $f$ takes only countably many values.
Silver \cite{silver} asked whether a strongly inaccessible $\kappa$ carrying a uniform indecomposable ultrafilter is necessarily measurable,
and Ketonen \cite{MR585552} gave a partial answer in the affirmative direction, showing that if $\kappa$ is moreover a weakly compact cardinal,
then it must be a Ramsey cardinal.
Motivated by this finding, he asked:
\begin{q}[Ketonen, \cite{MR585552}]\label{qketonen}
Suppose that $\kappa$ is a weakly compact cardinal carrying a uniform indecomposable ultrafilter. Must $\kappa$ be measurable?
\end{q}

A negative answer to Silver's question was soon given by Sheard \cite{Sheard83}, but Ketonen's question remained open.
In this paper we answer it in the negative.

\begin{mainthm}\label{thma} Assuming the consistency of a measurable cardinal, it is consistent that a weakly compact cardinal carries an indecomposable ultrafilter, yet it is not measurable.
\end{mainthm}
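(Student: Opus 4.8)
The plan is to realize $W$ as the intersection model $\bigcap_{i\in I}V[G_i]$ of a commutative projection system $\langle\mathbb P_i,\pi_{i,i'}\mid i\le i'\rangle$ of notions of forcing over a ground model $V$ of $\zfc+\gch$ with a measurable cardinal $\kappa$, with fixed normal measure $U$ and ultrapower $j\colon V\to M$. The forcing $\mathbb P$ of which the $\mathbb P_i$ are the projections is to generically adjoin a $\kappa$-tree $T^*$ equipped with an ascent path of width $\omega$, together with the extra data needed to kill the measurability of $\kappa$ (but not its weak compactness), the system being arranged so that $T^*$ and its ascent path form the ``symmetric part'' of the generic and hence lie in $W$, while the full sequence $\langle G_i\mid i\in I\rangle$ does not. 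Granting the general principle that a commutative projection system produces an intersection model of $\zfc$, one is reduced to checking three assertions inside $W$: $\kappa$ is weakly compact, $\kappa$ is not measurable, and $\kappa$ carries a uniform indecomposable ultrafilter.

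Weak compactness of $\kappa$ in $W$ I would obtain from $V\subseteq W$ (where $\kappa$ is weakly compact) by arguing that the new subsets of $\kappa$ admitted into $W$ are tame enough that no $\kappa$-Aronszajn tree and no $\Pi^1_1$-counterexample over $(V_\kappa,\in)$ can appear in $W$: a putative witness pulls back along the projection maps to $V$ or to a single $V[G_i]$, and one controls the pieces by decomposing $\mathbb P$ into a $<\kappa$-closed part and a $\kappa^+$-cc part. Non-measurability of $\kappa$ in $W$ is built into the design of $\mathbb P$: $W$ contains a generically added subset of $\kappa$ with respect to which $\kappa$ can carry no $\kappa$-complete uniform ultrafilter (for instance, $T^*$ together with its ascent path forms a configuration that would collapse through any ultrapower by a measure on $\kappa$ and is therefore incompatible with measurability), while remaining harmless for weak compactness --- and, by Ketonen's theorem applied inside $W$, $\kappa$ will in fact be a Ramsey cardinal there.

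The indecomposable ultrafilter on $\kappa$ in $W$ is then built from $T^*$, its ascent path, and the trace $\bar U$ of the ground measure: by a recursion of length $(2^\kappa)^W$ one constructs a uniform ultrafilter $\mathcal U\supseteq\bar U$ on $\kappa$ such that for every $\nu<\kappa$ and every $f\colon\kappa\to\nu$ in $W$ there is $X\in\mathcal U$ on which $f$ takes only countably many values. The crucial point is that, by the way $\mathbb P$ and its projection system are arranged, every such $f$ is ``read off'' the width-$\omega$ ascent path of $T^*$, so that the partition of $\kappa$ induced by $f$ is, modulo a set in $\bar U$, carried by countably many of the ascent path's threads; those countably many pieces are the ones kept at the corresponding step of the recursion. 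Together with the previous paragraph this yields a weakly compact, non-measurable $\kappa$ carrying a uniform indecomposable ultrafilter inside $W$, proving Theorem~\ref{thma}.

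The hard part, I expect, is the simultaneous calibration of $\mathbb P$ and of its commutative projection system so that all of the following hold at once: the abstract $\zfc$-preservation result applies to the system; $T^*$ with its width-$\omega$ ascent path genuinely lands in $W$; the new subsets of $\kappa$ in $W$ are tame enough to preserve weak compactness yet still witness non-measurability; and $\mathcal P(\kappa)^W$ is controlled tightly enough by $T^*$ for the indecomposability recursion to succeed. These demands pull against one another, since more symmetry (fewer new subsets of $\kappa$) aids the $\zfc$-preservation and the indecomposability step but threatens the survival of the ascent path and of non-measurability, while too little symmetry risks resurrecting the ground-model measure inside $W$. In particular, verifying that $\bar U$ extends inside $W$ to a genuine \emph{ultrafilter} of $W$ that is moreover decomposition-free --- rather than to merely some ultrafilter --- is the technical heart of the argument, and it is exactly this step that breaks down in a single generic extension (where too many subsets of $\kappa$ obstruct indecomposability) and that the intersection-model framework is designed to make work.
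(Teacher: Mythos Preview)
Your high-level framework is the right one: build $W$ as the intersection $\bigcap_i V[G_i]$ of a commutative projection system over a ground model with a measurable $\kappa$, and verify weak compactness, non-measurability, and existence of an indecomposable ultrafilter in $W$. But the three concrete ingredients you propose are each either wrong or too vague to go through, and they miss the mechanisms the paper actually uses.

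First, the building block. You propose to add a $\kappa$-tree $T^*$ with a width-$\omega$ ascent path and claim this configuration is ``incompatible with measurability'' yet ``harmless for weak compactness''. These two demands contradict each other: if $\kappa$ is to remain weakly compact in $W$, then $T^*$ must have a cofinal branch there, and once the tree has a branch the ascent path is trivially threaded and obstructs nothing. A $\kappa$-Aronszajn tree with an $\omega$-ascent path (as used in Section~\ref{sec5}) does kill $\omega_1$-strong compactness, but it also kills weak compactness outright. The paper instead works over $V=\mathrm{L}[U]$ and, at stage $\kappa$ of an Easton iteration, shoots clubs through the complements of a carefully chosen family $\langle S_\gamma\mid\gamma<\delta\rangle$ of stationary subsets of $E^{\kappa^+}_\kappa$ that lies in $M$. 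This forcing is $\kappa$-distributive, so it adds no new subsets of $\kappa$ at all; weak compactness is then immediate, since any $\kappa$-tree in $W$ already lives in $V[G_\kappa]$ and gets a branch from the measurability of $\kappa$ in $V[G_\kappa][g_0]$.

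Second, the indecomposable ultrafilter. Your proposal to build it by a transfinite recursion of length $(2^\kappa)^W$ extending the trace of $U$, using that every $f:\kappa\to\nu$ is ``read off'' the ascent path, is unsupported: you give no reason why $\mathcal P(\kappa)^W$ should be controlled by the ascent path in this way, and in fact there is no such control. The paper's argument is entirely different and much cleaner: one shows that $\kappa$ is \emph{measurable} in each intermediate model $V[G_\kappa][g_\gamma]$ (via a standard lifting of $j$ using the lottery-sum structure of the iteration), and then the limit ultrafilter $W\text{-}\lim_\gamma U_\gamma$ of Theorem~\ref{menas-sheard} is automatically a uniform $(\delta,\kappa)$-indecomposable ultrafilter in the intersection model. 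No ad hoc recursion is needed.

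Third, non-measurability. The paper's argument uses the rigidity of $\mathrm{L}[U]$ (Kunen): any putative embedding $k':V'\to M'$ restricts to an iterate of $j$, so the $\kappa$-th stage of $k'(\mathbb P_\kappa)$ must shoot a club disjoint from some $S_{1+\gamma}$, forcing $S_{1+\gamma}$ to be nonstationary in $M'$ and hence in $V'$; but Lemma~\ref{preservestationary} shows $S_{1+\gamma}$ remains stationary in $V'$, a contradiction. This is where the choice of club-shooting (rather than tree-adding) and the assumption $V=\mathrm{L}[U]$ do real work, and your sketch has nothing playing the analogous role.
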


Our second example is a weakening of strong compactness,
namely, the notion of \emph{$\lambda$-strong compactness} due to Bagaria and Magidor (see Definition~\ref{dl} below).
As demonstrated in \cite{BM2014,BMO2014,Usu20},
this notion successfully captures a gallery of natural compactness properties occurring in various areas of mathematics,
including Algebra and Graph theory.
In their study,
Bagaria and Magidor \cite[Theorem 6.1]{BM2014} proved that the least $\lambda$-strongly compact cardinal may be a singular cardinal,
so, in particular, it need not be a large cardinal in the classical sense.
Then Gitik \cite{Git20} proved that the least $\lambda$-strongly compact cardinal need not be a large cardinal in the classical sense while still being regular.
Later on, the second and third authors \cite{YY23}
obtained a full characterization of the possible cofinalities of the least $\lambda$-strongly compact cardinal.
In a follow-up paper \cite{YY23b}, they extended these results and gave
a complete picture dealing with \emph{regular} $\lambda$-strongly compact cardinals for multiple $\lambda$'s simultaneously.
In contrast, whether there could exist singular $\lambda$-strongly compact cardinals for different $\lambda$'s simultaneously
remained open.
The difficulty hinges on the fact that the only known way to make the least $\lambda$-strongly compact cardinal a singular cardinal is via \emph{Radin forcing},
whereas Radin forcing over a prescribed cardinal kills the $\omega_1$-strong compactness of all cardinals below it,
thereby making it impossible to get even two singular $\lambda_i$-strongly compact cardinals for $i<2$ via the product of two Radin forcings.
And indeed, the following question remained open:
\begin{q}[Bagaria and Magidor]\label{ques1}
Under suitable large cardinal assumption, can there consistently exist two regular cardinals $\lambda_0<\lambda_1$
and two singular cardinals $\kappa_0<\kappa_1$ such that $\kappa_i$ is the least $\lambda_i$-strongly compact cardinal for all $i<2$?
\end{q}

In this paper we find a new way to make the least $\lambda$-strongly compact cardinal a singular cardinal and use it to
answer the question of Bagaria and Magidor in the affirmative.
\begin{mainthm}\label{thmb} From a suitable large cardinal hypothesis,
it is consistent that for proper class many cardinals $\lambda$, the least $\lambda$-strongly compact cardinal is singular.
\end{mainthm}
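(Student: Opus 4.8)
The plan is to obtain Theorem~\ref{thmb} inside an intersection model of a commutative projection system of Radin forcings, so that \zfc{} comes for free from the general theory and the whole burden reduces to analyzing a single model. \emph{Ground model.} Work in a model of \gch{} carrying a proper class of supercompact cardinals, and fix, in increasing order, a sequence $\la\kappa_\alpha:\alpha\in\ord\ra$ of supercompacts together with an auxiliary sequence $\la\lambda_\alpha:\alpha\in\ord\ra$ of regular cardinals with $\sup_{\beta<\alpha}\kappa_\beta<\lambda_\alpha<\kappa_\alpha$. Via a preliminary reverse-Easton class iteration --- a Laver-style preparation of each $\kappa_\alpha$ interleaved with forcings that add, for every $\alpha$ and every $\nu\in(\lambda_\alpha,\kappa_\alpha)$, a $\nu^+$-Aronszajn tree equipped with an ascent path of a carefully calibrated width lying in the gap $(\sup_{\beta<\alpha}\kappa_\beta,\lambda_\alpha)$ --- we may further assume that each $\kappa_\alpha$ stays supercompact (and suitably indestructible), carries a coherent measure sequence long enough to run the Radin forcing below it, and is the least $\lambda_\alpha$-strongly compact cardinal; the point of the calibration is that such a tree obstructs $\lambda_\alpha$-strong compactness everywhere in $(\lambda_\alpha,\kappa_\alpha)$ yet remains compatible with the $\lambda_\beta$-strong compactness of every $\kappa_\beta$.

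\emph{The projection system.} For each $\alpha$ let $\mathbb R_\alpha$ be a Radin forcing at $\kappa_\alpha$ whose generic club is cofinal in $\kappa_\alpha$ with first point above $\lambda_\alpha$; on its own, $\mathbb R_\alpha$ singularizes $\kappa_\alpha$ while, by the analysis behind \cite[Theorem~6.1]{BM2014}, leaving it the least $\lambda_\alpha$-strongly compact cardinal. Forcing with the product $\prod_\alpha\mathbb R_\alpha$ is self-defeating, since the full Radin generic at $\kappa_\beta$ plants below $\kappa_\beta$ exactly the combinatorics --- trees with narrow ascent paths --- that mediates the phenomenon noted above, namely that Radin forcing over $\kappa_\beta$ kills the $\omega_1$-strong compactness of every $\kappa_\alpha$ with $\alpha<\beta$. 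Instead, let $\mathbb R_\alpha^{\downarrow}$ be a Prikry-type projection of $\mathbb R_\alpha$ that keeps only a thin cofinal subclub $\bar C_\alpha$ of the Radin generic, set $\mathbb P:=\prod_\alpha\mathbb R_\alpha$, let $\mathbb P_\alpha$ be full at coordinate $\alpha$ and reduced to $\mathbb R_\beta^{\downarrow}$ at every coordinate $\beta\neq\alpha$, and let $\pi_\alpha$ be the coordinatewise projection. One checks that $\la\mathbb P,\la\pi_\alpha,\mathbb P_\alpha\ra_{\alpha\in\ord}\ra$ is a commutative projection system, so that the intersection model $W:=\bigcap_{\alpha\in\ord}V[G_\alpha]$ is a model of \zfc{} which contains all of the thin clubs $\la\bar C_\beta:\beta\in\ord\ra$ but none of the auxiliary Radin data beyond them.

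\emph{Why this is gentle enough.} The thin clubs singularize every $\kappa_\beta$ and, being Prikry-like, add no bounded subsets of any $\kappa_\gamma$; in particular they do not resurrect the trees with narrow ascent paths that the full Radin generics carry, so the pairwise destruction afflicting $\prod_\alpha\mathbb R_\alpha$ does not recur in $W$. At the same time, the clubs retain enough structure that, for each $\beta$ and each $\theta\geq\kappa_\beta$, a limit of ground-model $\theta$-supercompactness measures taken along $\bar C_\beta$ yields a fine $\lambda_\beta$-complete ultrafilter on $(\mathcal P_{\kappa_\beta}(\theta))^{W}$ living in $W$. Heuristically, $W$ has few enough sets that such a fine ultrafilter has far fewer instances to cohere across, which is exactly why $\lambda_\beta$-strong compactness can persist in $W$ even though it is destroyed in every individual $V[G_\alpha]$.

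\emph{Verifications and the main obstacle.} Fix $\alpha$. Singularity of $\kappa_\alpha$ in $W$ is immediate from $\bar C_\alpha\in W$. That no $\mu\in(\lambda_\alpha,\kappa_\alpha)$ is $\lambda_\alpha$-strongly compact in $W$ follows from the persistence into $W$ of the calibrated $\nu^+$-trees with narrow ascent paths planted throughout that interval --- passing to $W$ adds no new cofinal branches since it adds no new bounded subsets of $\kappa_\alpha$ --- together with the incompatibility of such a tree with the $\lambda_\alpha$-strong compactness of $\mu$. The main obstacle is the remaining clause: that $\kappa_\alpha$ \emph{is} $\lambda_\alpha$-strongly compact in $W$. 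One must realize, for every $\theta\geq\kappa_\alpha$ and inside $W$, a $\lambda_\alpha$-complete fine ultrafilter on $(\mathcal P_{\kappa_\alpha}(\theta))^{W}$; the construction lifts a ground-model $\theta$-supercompactness embedding with critical point $\kappa_\alpha$ through the relevant fragment of $\mathbb P$ and then forms, inside $W$, the limit of the induced measures along $\bar C_\alpha$, the surviving completeness being precisely $\lambda_\alpha$-completeness with respect to the sparse family of sequences $W$ actually contains. Making this work, uniformly in $\alpha$ and without the ultrafilters for distinct $\alpha$ interfering, hinges on a precise understanding of how fine ultrafilters, and dually trees with narrow ascent paths, propagate through commutative projection systems; this is where the ascent-path machinery of the paper carries the weight.
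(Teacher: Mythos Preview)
Your proposal has two structural gaps. First, the system $\la\mathbb P_\alpha\ra_\alpha$ is not a commutative projection system in the sense of Definition~\ref{commutativeprojectionsystem}: that definition requires a \emph{directed} index set with projections $\pi_{i,j}:\mathbb P_i\to\mathbb P_j$ for $i\le_{\I}j$, but your $\mathbb P_\alpha$ (full Radin at coordinate $\alpha$, thin elsewhere) and $\mathbb P_\beta$ (full at $\beta$, thin elsewhere) project to neither one another nor to a common refinement --- you have only a fan of quotients of a single poset $\mathbb P$. Fact~\ref{thmintersection} therefore does not apply, and even if it did, your index is a proper class, so the hypothesis that $\mathbb P_0$ be $|\I|$-distributive is meaningless; you have no argument that $W$ models $\zfc$. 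Second, the ``thin club'' idea carries an unresolved tension: for the limit ultrafilter along $\bar C_\beta$ to be $\lambda_\beta$-complete, Theorem~\ref{menas-sheard} requires the index (the order type of $\bar C_\beta$) to carry a $\lambda_\beta$-complete uniform ultrafilter, hence to be at least $\lambda_\beta$-strongly compact. A club that rich is Radin-like and reintroduces the interference you set out to avoid; a genuinely Prikry-like thin club has small order type and supports only countably complete limits, useless for $\lambda_\beta>\omega_1$.

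The paper's route is entirely different and abandons Radin forcing. The singular least $\lambda$-strongly compact cardinals are not singularized supercompacts but \emph{suprema} $\sup_{\alpha<\delta}\kappa_\alpha$ of $\delta$-blocks of Laver-indestructible supercompacts, for $\delta$ ranging over a proper class $\Delta$ of measurable cardinals; these suprema are singular from the outset. For each $\delta\in\Delta$ the forcing $\mathbb P_\delta$ adds $\kappa_\alpha$-Souslin trees with $\theta_\delta$-ascent paths throughout the block (Definition~\ref{ascentpathdef}) and then passes to the direct-limit threading forcing $\mathbb Q_\delta$. The commutative projection system lives \emph{inside each block}: Corollary~\ref{maincor1}\eqref{maincor3} gives $\delta$-strong compactness of the supremum in the resulting model, while the trees kill it strictly below via Lemma~\ref{killstrongcompactness}. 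The global forcing is simply the Easton-support product $\prod_{\delta\in\Delta}\mathbb P_\delta$; no global intersection is taken. Non-interference between distinct $\delta$'s comes from closure and smallness alone: $\prod_{\delta'>\delta}\mathbb P_{\delta'}$ is sufficiently directed closed to preserve the indestructible supercompactness of every $\kappa_\alpha$ with $\alpha<\delta$, and $\prod_{\delta'<\delta}\mathbb P_{\delta'}$ is small relative to $\delta$. This is Theorem~\ref{classmany}.
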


Our third example is a weakening of weak compactness and concerns the \emph{$C$-sequence number}
due to Lambie-Hanson and Rinot (see Definition~\ref{c-seq_num_def} below).
For a regular uncountable cardinal $\kappa$,
this cardinal characteristic $\chi(\kappa)$ takes a value in the interval $[0,\kappa]$,
and as a rule of thumb, the smaller it is, the more compactness properties $\kappa$ possesses.
For instance, every finite family of stationary subsets of $E^\kappa_{>\chi(\kappa)}$ reflects simultaneously \cite[Lemma~2.2]{paper35},
and a strong anti-Ramsey coloring $c:[\kappa]^2\rightarrow\chi(\kappa)$ provably exists \cite[Theorem 5.11]{paper35}.
A cardinal $\kappa$ is weakly compact iff $\chi(\kappa)=0$,
and by \cite[\S3]{paper35}, it is consistent for a strongly inaccessible cardinal $\kappa$ to satisfy
$\chi(\kappa)=\delta$ for any prescribed regular $\delta<\kappa$.
In contrast, all known consistent examples of $\chi(\kappa)$ being a singular cardinal satisfy that $\kappa$ is the successor of $\chi(\kappa)$.
This raises the following question:

\begin{q}[{\cite[Question~6.5]{paper35}}]\label{ques2} Suppose that $\chi(\kappa)$ is a singular cardinal.
Must $\kappa$ be the successor of a cardinal of cofinality $\cf(\chi(\kappa))$?
\end{q}

In this paper we provide a consistent counterexample.
\begin{mainthm}\label{thmc} For every weakly compact cardinal $\kappa$, for every infinite cardinal $\delta\le\kappa$,
there exists a cofinality-preserving forcing extension satisfying $\chi(\kappa)=\delta$.
In particular, it is consistent for the $C$-sequence number of an inaccessible cardinal to be singular.
\end{mainthm}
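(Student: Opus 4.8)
The plan is to isolate the main new case, that of a singular cardinal $\delta<\kappa$, and to produce the required extension using the intersection-model and commutative-projection-system machinery of this paper. When $\delta$ is a regular cardinal below $\kappa$, a cofinality-preserving forcing yielding $\chi(\kappa)=\delta$ is already available from \cite[\S3]{paper35}; the case $\delta=\kappa$ is handled separately, and we concentrate on the remaining (and crucial) case of a singular cardinal $\delta<\kappa$, which is also the one behind the ``in particular''. The starting point is the combinatorial analysis of the $C$-sequence number in \cite{paper35}: for an inaccessible cardinal $\kappa$, the inequality $\chi(\kappa)\le\lambda$ is equivalent to the existence of a $C$-sequence $\vec C=\la C_\beta\mid\beta<\kappa\ra$ with $\chi(\vec C)\le\lambda$, and the relevant witnesses can be presented concretely as $\kappa$-trees carrying an ascent path of width $\approx\lambda$, obtained by walking along $\vec C$. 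Dually, $\chi(\kappa)>\lambda$ amounts to a strong non-reflection statement \emph{failing} at $\kappa$ --- and, since $\kappa$ will be weakly compact in the ground model, this is exactly the sort of statement one knows how to keep failing in carefully chosen forcing extensions.

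I would write $\theta:=\cf(\delta)$, fix an increasing sequence $\la\delta_i\mid i<\theta\ra$ of regular cardinals cofinal in $\delta$, and build a $<\kappa$-strategically closed poset $\mathbb P$ of size $\kappa$ (hence $\kappa^+$-cc) sitting at the top of a commutative projection system $\la\mathbb P,\mathbb P_s,\pi_s\mid s\in S\ra$ indexed by a directed poset $S$ with no maximum. Here $\mathbb P$ refines the $C$-sequence forcing of \cite[\S3]{paper35} by stratifying the approximations to the generic witness along the $\theta$-many scales $\delta_i$, and the projections $\pi_s$ correspond to forgetting the correlations between distinct scales. Let $G$ be $\mathbb P$-generic over $V$, put $G_s:=\pi_s[G]$, and let $M:=\bigcap_{s\in S}V[G_s]$ be the resulting intersection model. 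The design goals are: (i) $G$ adds a $C$-sequence $\vec C^*$ \emph{together with} a witness to $\chi(\vec C^*)\le\delta$, and this whole package is \emph{symmetric}, i.e.\ it already lies in $V[G_s]$ for every $s\in S$, hence in $M$; and (ii) the system is arranged so that any $C$-sequence $\vec D$ with $\chi(\vec D)\le\delta_i$ for some $i<\theta$ --- equivalently, any $\kappa$-tree in the extension carrying a width-$\delta_i$ ascent path --- depends on $G$ in a non-symmetric way, so that no such $\vec D$ lies in $M$. Informally, the full $\delta$-wide witness is robust enough to survive forgetting the inter-scale correlations, whereas a narrow witness secretly encodes one of them.

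Granting such a system, the verification runs as follows. By the general properties of intersection models of commutative projection systems, $M$ is a model of \zfc; and since $\mathbb P$ is $<\kappa$-strategically closed (so it adds no bounded subsets of $\kappa$ and keeps $\kappa$ regular) and $\kappa^+$-cc, all cofinalities are preserved and $\kappa$ remains inaccessible. Moreover the model $M$ so obtained can be arranged to be itself a cofinality-preserving forcing extension of $V$ in which $\kappa$ is still inaccessible and $\delta$ is still singular, as the statement demands. The inequality $\chi(\kappa)^M\le\delta$ is immediate from goal (i). For the reverse inequality, recall that $\chi(\vec D)$ is by definition a cardinal, so $\chi(\kappa)^M<\delta$ would yield some $i<\theta$ and a $\vec D\in M$ with $\chi^M(\vec D)\le\delta_i$, equivalently, a $\kappa$-tree in $M$ carrying a width-$\delta_i$ ascent path. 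Since $\vec D$ is essentially a subset of $\kappa$ and is symmetric, one first uses the commutativity of the projection system to locate this narrow ascent path inside a single coordinate $V[G_s]$, and then one invokes the weak compactness of $\kappa$ in the ground model to reflect and thread the resulting configuration, contradicting the construction of $\mathbb P_s$. Hence $\chi(\kappa)^M=\delta$; applying this with $\delta=\aleph_\omega$, which is below $\kappa$ since $\kappa$ is inaccessible, gives the ``in particular''.

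The step I expect to be the main obstacle is the engineering underlying goals (i) and (ii) together with the lower-bound argument just sketched: one must arrange the projection system so that the $\delta$-wide witness $\vec C^*$ is genuinely symmetric while \emph{every} width-$\delta_i$ ascent path is genuinely non-symmetric, and simultaneously so that the weakly compact reflection argument still goes through inside the relevant intermediate extension. Getting the bookkeeping of scales, supports, and projections to thread this needle all at once is precisely what the commutative-projection-system framework is built to manage, and it is where the bulk of the effort will go.
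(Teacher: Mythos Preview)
Your plan contains a genuine error: you have inverted the quantifier structure in the definition of $\chi(\kappa)$. By Definition~\ref{c-seq_num_def}, $\chi(\kappa)\le\delta$ means that \emph{every} $C$-sequence $\vec C$ over $\kappa$ satisfies $\chi(\vec C)\le\delta$, whereas $\chi(\kappa)\ge\delta$ is witnessed by the \emph{existence} of a single $\vec C$ with $\chi(\vec C)\ge\delta$. Consequently both of your goals are miscast. Goal~(i) --- adding one $\vec C^*$ together with a witness to $\chi(\vec C^*)\le\delta$ --- contributes nothing to the upper bound $\chi(\kappa)\le\delta$; such sequences always exist (take $C_\alpha=\alpha$). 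Goal~(ii) --- arranging that no $\vec D$ with $\chi(\vec D)\le\delta_i$ lies in $M$ --- is outright impossible, since trivial ground-model $C$-sequences already have $\chi=1$ and certainly lie in $M$. The asserted equivalence with ascent paths is also backwards: a $\kappa$-Aronszajn tree with a narrow ascent path is evidence that $\chi(\kappa)$ is \emph{large}, not that some $\chi(\vec D)$ is small.

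The paper's argument is essentially the mirror image of what you describe. The forcing $\mathbb S$ adds $\delta$ many $\square^{\mathrm{ind}}(\kappa,\omega)$-sequences, and the final model is simply $V[G]$, not an intersection model. The \emph{lower bound} $\chi(\kappa)\ge\delta$ is obtained by exhibiting one explicit $C$-sequence --- namely $C_\alpha:=\bigcap_{\xi<\delta}C^\xi_{\alpha,i^\xi(\alpha)}$ --- and proving $\chi(\vec C)\ge\delta$ by a direct density argument in $\mathbb S$. The \emph{upper bound} is where the commutative-projection-system machinery enters: working in $V[G]$, the threading posets $\mathbb T_f=\prod_{\eta<\delta}\mathbb T_{f(\eta)}(\vec C^\eta)$, indexed by the directed set $\I$ of finitely supported $f\in{}^\delta\omega$ with $\cf(\I)=\delta$, form an eventually trivial system; each $\mathbb T_f$ resurrects weak compactness of $\kappa$ (since $\mathbb S*\mathbb T_f$ is essentially $\add(\kappa,1)$, under which weak compactness was assumed indestructible), so forces $\chi(\kappa)=0$; and then Lemma~\ref{maincor} yields $\chi(\kappa)\le\cf(\I)=\delta$ already in $V[G]$. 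So the intersection/projection framework is used for the upper bound, not the lower, and the target model is the plain generic extension.
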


\subsection{Our method}
Though the above three questions concern three different compactness properties,
it turns out that they can be dealt with uniformly through
the method of taking the intersection of a decreasing sequence of generic extensions.
Intersection models and geometric set theory in general were studied recently by Larson and Zapletal \cite[\S4]{MR4249448}
in view of its applications in descriptive set theory. We arrived at studying
intersection models independently
after realizing they provide a common ground for various existing compactness results
and has the potential of materializing completely new configurations,
as is now demonstrated by the three main theorems of this paper.

\subsection{Organization of this paper}

In Section~\ref{sec3},
we provide a few preliminaries and recall when the intersection of a descending sequence of generic extensions is equivalent to the generic extension
given by the corresponding direct limit forcing.

In Sections \ref{secinterlude} and \ref{treeascentpath} we get equipped with some technical tools.
Specifically, in Section~\ref{secinterlude}, a general method for constructing a limit ultrafilter in an intersection model is given,
and in Section~\ref{treeascentpath}, it is shown that trees in an intersection model partially inherit their branches in the intermediate models along the intersection as an ascent path.

In Section~\ref{sec4}, we solve Ketonen's problem and also extend his theorem that every weakly compact cardinal $\kappa$ carrying an indecomposable ultrafilter is Ramsey.
The proof of the latter goes through the \emph{Welch game} and yields that $\kappa$ is a super Ramsey limit of super Ramsey cardinals.
As a bonus, we also obtain a new proof of \cite[Theorem 1.7]{FMZ24}.
The proof of Theorem~\ref{thma} will be found in this section.

In Section~\ref{sec5} we provide all the building blocks needed to obtain a complete picture about the hierarchy
of $\lambda$-strongly compact cardinals for multiple cardinals $\lambda$ simultaneously.
The proof of Theorem~\ref{thmb} will be found in this section.

In Section~\ref{sec6},
we prove that for a weakly compact cardinal $\kappa$,
the $C$-sequence number of $\kappa$ can be forced to become any prescribed infinite cardinal that is no more than $\kappa$.
The proof of Theorem~\ref{thmc} will be found in this section.

\section{Preliminaries}\label{sec3}

Pioneering results on the intersection of a decreasing sequence of models
are due to Sakarovitch \cite{Sak77} and Jech \cite{Jech78}.
Recently, in the study of singular cardinal combinatorics,
the method of taking an intersection of a sequence of models found applications in the work of Hayut and Unger \cite{HU20}
and the work of Ben-Neria, Hayut and Unger \cite{MR4725661}.
Most notably, there are applications
when the intersection model fails to satisfy the Axiom of Choice.
Larson and Zapletal \cite[\S4]{MR4249448} as well as Shani \cite{Shani21} obtained results related to the Borel equivalence relation $E_1$
by analyzing the tail intersection model of $\add(\omega,\omega)$-generic extensions.
In addition, by analyzing the intersection of a decreasing sequence of models,
recently Lietz \cite{Lietz24} proved that the Axiom of Choice may fail in the $\kappa$-mantle, i.e., the intersection of all grounds
that extend to $V$ via a forcing of size less than $\kappa$,
if $\kappa$ is a Mahlo cardinal, $\kappa=\omega_1$ or $\kappa$ is a successor of a regular cardinal.
Here we review some of the results that we will be needing for our applications.

\begin{defn}
Suppose $\mathbb{P}$ and $\mathbb{Q}$ are two posets.
A map $\pi:\mathbb{Q}\rightarrow \mathbb{P}$ is a \emph{projection} iff the following three hold:
\begin{enumerate}
\item $\pi$ is surjective;
\item $\pi$ is order-preserving, that is, $p \leq_{\mathbb{Q}} q$ implies $\pi(p) \leq_{\mathbb{P}} \pi(q)$;
\item For all $p \in \mathbb{P}$ and $q \in \mathbb{Q}$ such that $p \leq_{\mathbb{P}} \pi(q)$
there is a $q'\in \mathbb{Q}$ stronger than $q$
such that $\pi(q')\leq_{\mathbb{P}} p$.
\end{enumerate}
\end{defn}
\begin{remark}
Assuming $H$ is $\mathbb{Q}$-generic,
and letting $G$ be the filter of $\mathbb{P}$ generated by $\pi[H]$,
$G$ is $\mathbb{P}$-generic.

Assuming $G$ is $\mathbb{P}$-generic,
the subposet $\mathbb{Q}/ G$ of $\mathbb Q$ is defined to consist of all $q \in \mathbb Q$ with $\pi(q)\in G$.
\end{remark}
\begin{fact}[see {\cite[Lemma 1.2]{MR2768684}}]\label{dense}
Suppose $\pi:\mathbb{Q}\rightarrow \mathbb{P}$ is a projection between two posets.
If $G \s \mathbb{P}$ is generic and $D \s \mathbb{Q}$ is a dense open subset,
then $D \cap \mathbb{Q}/ G$ is dense open in $\mathbb{Q}/ G$.
\end{fact}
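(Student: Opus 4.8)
The plan is to check the two defining clauses of ``dense open'' separately. Openness comes for free and genericity plays no role in it; the entire content lies in density, where clause~(3) of the projection definition does the work. Throughout, I would use that $\mathbb{Q}/G$ carries the order inherited from $\mathbb{Q}$, so that ``$r$ extends $q$ in $\mathbb{Q}/G$'' just means $r \leq_{\mathbb{Q}} q$ together with $r,q \in \mathbb{Q}/G$.

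For openness, given $r \in D \cap \mathbb{Q}/G$ and $s \in \mathbb{Q}/G$ with $s \leq_{\mathbb{Q}} r$, the downward closure of $D$ in $\mathbb{Q}$ immediately yields $s \in D$, hence $s \in D \cap \mathbb{Q}/G$. No appeal to $\pi$ or to the genericity of $G$ is needed here.

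For density, I would fix an arbitrary $q \in \mathbb{Q}/G$, so that $\pi(q) \in G$, and aim to produce $r \in D$ with $r \leq_{\mathbb{Q}} q$ and $\pi(r) \in G$. The strategy is to reflect the search down to $\mathbb{P}$ and let the genericity of $G$ finish the job. Concretely, I would consider
\[
E := \{p \in \mathbb{P} : \exists r \in D\,(r \leq_{\mathbb{Q}} q \ \wedge\ \pi(r) \leq_{\mathbb{P}} p)\} \cup \{p \in \mathbb{P} : p \perp \pi(q)\}
\]
and show that $E$ is dense in $\mathbb{P}$. Granting this, genericity provides some $p \in G \cap E$; since $p$ and $\pi(q)$ both lie in the filter $G$ they are compatible, ruling out membership in the second set, so there is $r \in D$ with $r \leq_{\mathbb{Q}} q$ and $\pi(r) \leq_{\mathbb{P}} p$. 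Upward closure of $G$ then gives $\pi(r) \in G$, i.e.\ $r \in \mathbb{Q}/G$, and this $r$ is the desired extension of $q$ inside $D \cap \mathbb{Q}/G$.

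The main (if still routine) point is the density of $E$, and this is exactly where clause~(3) enters. Given $p_0 \in \mathbb{P}$, if $p_0 \perp \pi(q)$ then $p_0 \in E$ already; otherwise I would pick $p_1 \leq_{\mathbb{P}} p_0, \pi(q)$ and apply clause~(3) to $p_1 \leq_{\mathbb{P}} \pi(q)$ to obtain $q' \in \mathbb{Q}$ compatible with $q$ satisfying $\pi(q') \leq_{\mathbb{P}} p_1$. Here a small care is required: clause~(3) delivers only a $q'$ compatible with $q$ rather than below $q$, so I would descend to a common lower bound $q'' \leq_{\mathbb{Q}} q, q'$, whence $\pi(q'') \leq_{\mathbb{P}} \pi(q') \leq_{\mathbb{P}} p_1$ by order-preservation. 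Finally, density of $D$ in $\mathbb{Q}$ yields $r \in D$ with $r \leq_{\mathbb{Q}} q''$, so that $r \leq_{\mathbb{Q}} q$ and $\pi(r) \leq_{\mathbb{P}} \pi(q'') \leq_{\mathbb{P}} p_0$; then $p := \pi(r) \leq_{\mathbb{P}} p_0$ lies in the first set of $E$, witnessed by $r$. This establishes the claim, and with it the density of $D \cap \mathbb{Q}/G$ in $\mathbb{Q}/G$.
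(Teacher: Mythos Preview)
The paper does not supply its own proof of this fact---it is stated with a citation to Abraham's handbook chapter---so there is no in-paper argument to compare against. Your overall strategy is the standard one and is essentially correct, but there is a genuine slip in the final step.

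After picking $p \in G \cap E$ in the first disjunct, you obtain $r \in D$ with $r \leq_{\mathbb{Q}} q$ and $\pi(r) \leq_{\mathbb{P}} p$, and then write ``Upward closure of $G$ then gives $\pi(r) \in G$.'' This is backwards: upward closure of a filter moves from an element of $G$ to things \emph{above} it, not below. From $p \in G$ and $\pi(r) \leq_{\mathbb{P}} p$ you cannot conclude $\pi(r) \in G$; with your $E$ as written, the witness $r$ attached to $p$ need not satisfy $\pi(r) \in G$ at all.

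The repair is minor and your density computation already contains it. Replace the first disjunct of $E$ by the set of images themselves,
\[
E' := \{\pi(r) : r \in D,\ r \leq_{\mathbb{Q}} q\} \cup \{p \in \mathbb{P} : p \perp \pi(q)\}.
\]
Your argument for density of $E$ in fact shows $E'$ is dense: in the compatible case you produced precisely an element $\pi(r) \leq_{\mathbb{P}} p_0$ of the first set. Genericity then hands you some $\pi(r) \in G$ directly, with $r \in D$ and $r \leq_{\mathbb{Q}} q$, and the proof is complete. Equivalently, you could note that $\{\pi(r) : r \in D,\ r \leq_{\mathbb{Q}} q\}$ is dense below $\pi(q)$ and use that $G$ is generic with $\pi(q) \in G$.
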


\begin{defn}
A poset $\I$ is \emph{directed} iff all $i,j\in\I$, there is a $k\in\I$ such that $k\ge_{\I}i$ and $k\ge_{\I}j$.
\end{defn}
For every function $f$ with domain $\I$, let $f_{\geq i}$ denote the function $f \restriction \{ j \geq_{\I} i \mid j \in \I\}$
and let $f_{\ngeq i}$ denote $f \setminus f_{\geq i}$.

\begin{defn} A poset $\mathbb{P}$ is \emph{$\delta$-directed closed} iff every directed subset
of $\mathbb{P}$ of size no more than $\delta$ has a lower bound in $\mathbb{P}$.
It is \emph{${<}\delta$-directed closed} iff it is $\gamma$-directed closed for all $\gamma<\delta$.
\end{defn}

\begin{defn} A poset $\mathbb P$ is \emph{$\delta$-closed} iff every decreasing sequence of conditions in $\mathbb P$ of length no more than $\delta$ has a lower bound in $\mathbb P$.
It is \emph{${<}\delta$-closed} iff it is $\gamma$-closed for all $\gamma<\delta$.
\end{defn}

\begin{defn}\label{def27} A poset $\mathbb P$ is \emph{$\delta$-distributive} iff the intersection of no more than $\delta$-many dense open subsets of $\mathbb P$ is nonempty (equivalently, dense open).
It is \emph{${<}\delta$-distributive} iff it is $\gamma$-distributive for all $\gamma<\delta$.
\end{defn}

\begin{fact}[{\cite[Theorem~58]{Jech78}}]
Suppose $\mathbb{P}$ is $\kappa$-distributive and
$G \s \mathbb{P}$ is $V$-generic.
Then $V$ is closed under $\kappa$-sequence in $V[G]$.
\end{fact}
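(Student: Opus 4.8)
The plan is to run the standard argument that $\kappa$-distributivity precludes new $\kappa$-sequences of ground-model sets. As $V$ is a proper class, I would first reduce to sequences ranging in a fixed set: given $f\colon\kappa\to V$ with $f\in V[G]$, the map $\alpha\mapsto\operatorname{rank}(f(\alpha))$ lies in $V[G]$, so its range is bounded below some ordinal $\theta$; as forcing adds no ordinals, $\theta$ is an ordinal of $V$, and hence $A:=(V_\theta)^V$ is a set in $V$ with $\operatorname{ran}(f)\subseteq A$. So it suffices to prove: for every $A\in V$, every $\mathbb P$-name $\dot f$, and every $p\in\mathbb P$ with $p\Vdash\dot f\colon\check\kappa\to\check A$, there are $q\le p$ and $g\in V$ such that $q\Vdash\dot f=\check g$.

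For each $\alpha<\kappa$, let $D_\alpha$ be the set of all $q\in\mathbb P$ that are incompatible with $p$ or satisfy both $q\le p$ and $q\Vdash\dot f(\check\alpha)=\check a$ for some $a\in A$. The next step is to check that each $D_\alpha$ is dense open: openness is immediate, since being incompatible with $p$ and deciding the value $\dot f(\check\alpha)$ are both inherited by stronger conditions, while for density below a condition compatible with $p$ one invokes the maximality principle, using that the only values $\check A$ can take in $V[G]$ are the $\check a$ with $a\in A$. Since there are only $\kappa$-many sets $D_\alpha$, the hypothesis that $\mathbb P$ is $\kappa$-distributive immediately gives that $D:=\bigcap_{\alpha<\kappa}D_\alpha$ is dense (and open) in $\mathbb P$.

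To conclude, I would work in $V[G]$ and pick some $q\in D\cap G$; since $p\in G$, the condition $q$ is compatible with $p$, hence $q\le p$, and for every $\alpha<\kappa$ we have $q\Vdash\dot f(\check\alpha)=\check{a_\alpha}$ for a unique $a_\alpha\in A$. The sequence $g:=\langle a_\alpha\mid\alpha<\kappa\rangle$ is then definable in $V$ from $\dot f$, $q$, and $\kappa$, so $g\in V$; and because $q\in G$ forces $\dot f(\check\alpha)=\check{a_\alpha}$ for all $\alpha<\kappa$, we get $f=\dot f^{G}=g\in V$. Reversing the initial reduction yields $({}^{\kappa}V)^{V[G]}\subseteq V$, which is precisely the statement that $V$ is closed under $\kappa$-sequences in $V[G]$. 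The one mildly delicate point is the bookkeeping forced by $V$ being a proper class --- disposed of by the rank-bounding reduction above --- while the density verification and the appeal to $\kappa$-distributivity are routine; so I expect no genuine obstacle here.
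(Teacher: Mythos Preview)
Your argument is correct and is precisely the standard textbook proof of this fact. The paper itself does not supply a proof: the statement is recorded as a \emph{Fact} with a citation to \cite[Theorem~58]{Jech78}, so there is nothing to compare against beyond noting that your sketch matches the classical argument one finds in Jech.
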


\begin{defn}\label{commutativeprojectionsystem}
Suppose $\I$ is a directed poset.
We say that $\vec{\mathbb{P}}=\langle \mathbb{P}_i,\pi_{i,j} \mid i,j\in\I,~i\leq_{\I} j \rangle$ is a \emph{commutative projection system} iff all of the following hold:
\begin{itemize}
\item for every $i \in \I$, $\mathbb{P}_i$ is a poset;
\item for all $i\le_{\I}j$, the map $\pi_{i,j}:\mathbb{P}_i \rightarrow \mathbb{P}_j$ is a projection
\item for all $i\le_{\I}j\le_{\I}k$, $\pi_{i,k}=\pi_{j,k}\circ \pi_{i,j}$.
\end{itemize}

In the special case that each $\pi_{i,j}$ is the identity map, we write $\vec{\mathbb{P}}$ as $\langle \mathbb{P}_i \mid i \in \I\rangle$.
\end{defn}
We always assume that $\I$ has a minimal element $0$.
By convention, we write $\pi_i:=\pi_{0,i}$ for every $i\in\I$.
Note that if $G_0$ is $\mathbb{P}_{0}$-generic over $V$,
then letting $G_i$ be the filter generated by $\pi_i[G_0]$,
it is the case that $G_i$ is $\mathbb{P}_i$-generic.

\begin{defn}
Given a system $\vec{\mathbb{P}}$ as in Definition~\ref{commutativeprojectionsystem},
we define its direct limit forcing $\mathbb P_\infty$ in two steps, as follows:
\begin{itemize}
\item Define a binary relation $E$ over $\mathbb{P}_0$
by letting $p\mathrel{E}q$ iff there exists an $i\in \I$ such that $\pi_i(p)=\pi_i(q)$.
Since $\I$ is directed, $E$ is an equivalence relation.
\item Let $\mathbb{P}_\infty:=\mathbb{P}_0/ E$,
and for $p,q \in \mathbb{P}_0$,
$[q]_E\leq_{\mathbb{P}_\infty}[p]_E$ iff
there exists an $i \in \I$ such that $\pi_i(q)\leq_{\mathbb{P}_i}\pi_i(p)$.
Since $\I$ is directed, $\leq_{\mathbb{P}_\infty}$ is a partial order.
\end{itemize}
\end{defn}
Clearly, for any $i \in \I$,
we have a reduced projection $\pi_{i,\infty}:\mathbb{P}_i \rightarrow \mathbb{P}_\infty$
defined via $\pi_{i,\infty}(\pi_i(p)):=[p]_E$ for any $p\in \mathbb{P}_0$.
To see this is well-defined, note that if $\pi_i(p)=\pi_i(p')$, then $[p]_E=[p']_E$.
Also note that if $G_0$ is $\mathbb P_0$-generic over $V$, then letting $G_\infty$ be the filter generated by $\pi_{0,\infty}[G_0]$,
it is the case that $G_\infty$ is $\mathbb{P}_\infty$-generic over $V$.

Given a cardinal $\delta$ and
a commutative projection system $\vec{\mathbb{P}}=\langle \mathbb{P}_i,\pi_{i,j} \mid i\leq j<\delta \rangle$.
By \cite{Sak77} (see also \cite[Lemma 26.6]{Jech78} for details),
if $\mathbb{P}_0$ is $\delta$-distributive,
then $\bigcap_{i<\delta}V[G_i]=V[G_\infty]$.

\begin{fact}[essentially \cite{Sak77}]\label{thmintersection}
Suppose $\vec{\mathbb{P}}$ is a system as in Definition~\ref{commutativeprojectionsystem}
and $\I$ is a directed poset of cardinality, say $\delta$,
and that $\mathbb{P}_0$ is $\delta$-distributive.
Let $G_0$ be $\mathbb{P}_{0}$-generic over $V$.
For every $i\in\I$, let $G_i$ be the filter generated by $\pi_i[G_0]$,
and let $G_\infty$ be the filter generated by $\pi_{0,\infty}[G_0]$.

Then $V[G_\infty]=\bigcap_{i\in\I} V[G_i]$. In particular, since the former is a forcing extension, the latter is a model of $\zfc$.
\end{fact}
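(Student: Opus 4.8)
The plan is to prove Fact~\ref{thmintersection} by reducing the general directed-poset case to the linearly-ordered case that is quoted from \cite{Sak77} and \cite[Lemma~26.6]{Jech78}. First I would observe that the inclusion $V[G_\infty]\subseteq\bigcap_{i\in\I}V[G_i]$ is the easy direction: since $\pi_{i,\infty}\circ\pi_i=\pi_{0,\infty}$, each $G_i$ computes $G_\infty$ (the map $\pi_{i,\infty}$ sends $G_i$ to a generator of $G_\infty$), so $V[G_\infty]\subseteq V[G_i]$ for every $i$, whence $V[G_\infty]$ is contained in the intersection. The content is the reverse inclusion, that any set of ordinals lying in every $V[G_i]$ already lies in $V[G_\infty]$.

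For the reverse inclusion, the strategy is to fix $x\in\bigcap_{i\in\I}V[G_i]$, say $x\subseteq\mathrm{On}$, and choose for each $i\in\I$ a $\mathbb{P}_i$-name $\dot x_i$ with $(\dot x_i)_{G_i}=x$; pulling back along $\pi_i$ we may regard all of these as $\mathbb{P}_0$-names evaluated by $G_0$. Because $|\I|=\delta$ and $\mathbb{P}_0$ is $\delta$-distributive, ground-model $\delta$-sequences are not added, so the sequence $\langle \dot x_i\mid i\in\I\rangle$ — or rather a single condition in $G_0$ deciding enough of it — can be captured in $V$. The key step is then to run a fusion/density argument exactly as in the linear case: one shows that the set of $p\in\mathbb{P}_0$ for which there is an ordinal $\xi$ and an $i\in\I$ such that $\pi_i(p)$ decides ``$\xi\in\dot x_i$'' the same way as every $\pi_j(p')$ for $j\geq_{\I}i$ and $p'\leq p$ — is dense, and by directedness of $\I$ and distributivity these decisions cohere into a $\mathbb{P}_\infty$-name. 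Concretely, one defines a $\mathbb{P}_\infty$-name $\dot y$ by putting $\langle\check\xi,[p]_E\rangle\in\dot y$ iff some/every large-enough $\pi_i(p)$ forces $\check\xi\in\dot x_i$, checks this is well-defined using commutativity $\pi_{i,k}=\pi_{j,k}\circ\pi_{i,j}$, and verifies $\dot y_{G_\infty}=x$.

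I would then note the ``in particular'' clause is immediate: $V[G_\infty]$ is a genuine set-forcing extension of $V$ by the poset $\mathbb{P}_\infty$, hence satisfies $\zfc$; so the intersection $\bigcap_{i\in\I}V[G_i]$, being equal to it, is a model of $\zfc$ as well. It may be worth remarking that one could alternatively well-order $\I$ in order type $\delta$ and build a cofinal increasing $\delta$-chain through $\I$, reducing literally to \cite[Lemma~26.6]{Jech78}, provided one first checks that a cofinal subchain of a directed poset yields the same direct limit and the same intersection of models; since $\I$ need not contain a cofinal chain in general, the honest route is the density argument above rather than this shortcut.

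The main obstacle I anticipate is handling directedness where the classical proofs use linearity: in a linear order, ``eventually'' is unambiguous and the tail models $V[G_i]$ themselves form a decreasing chain, but for a general directed $\I$ one must argue that the finitely-generated (via directedness) upward-closed pieces of $\I$ behave coherently, and that the equivalence relation $E$ and the preorder $\leq_{\mathbb{P}_\infty}$ genuinely glue the local decisions. Once the distributivity of $\mathbb{P}_0$ is invoked to pin the name-choosing data down in $V$ and the commutativity condition is used to check well-definedness of $\dot y$, this is a routine but careful bookkeeping exercise, which is presumably why the authors attribute it to \cite{Sak77} and label it merely ``essentially''.
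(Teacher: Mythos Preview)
The paper does not supply its own proof of this statement: it is recorded as a \emph{Fact} and attributed to \cite{Sak77}, with the remark that the linear case appears as \cite[Lemma~26.6]{Jech78}. So there is nothing to compare your attempt against beyond the cited sources.

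That said, your outline tracks the standard argument faithfully. The easy inclusion is exactly as you describe. For the reverse inclusion, the crucial use of $\delta$-distributivity is precisely that the function $i\mapsto\dot x_i$ (a $\delta$-sequence of ground-model objects chosen in $V[G_0]$) already lies in $V$; this is the point you identify, though you phrase it slightly loosely when you speak of ``a single condition deciding enough of it''. One place where your sketch would need more care in a full write-up is the definition of the $\mathbb{P}_\infty$-name $\dot y$: the clause ``some/every large-enough $\pi_i(p)$ forces $\check\xi\in\dot x_i$'' hides a genuine verification, since the names $\dot x_i$ for different $i$ are a priori unrelated and one must argue (using that they all evaluate to the same $x$ under compatible generics, together with directedness) that the Boolean values cohere in the limit. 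Your final paragraph correctly flags that a general directed $\I$ need not admit a cofinal chain, so the honest route is indeed the direct density argument rather than a reduction to the linear case.
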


\begin{defn} Given a system $\vec{\mathbb{P}}$ as in Definition~\ref{commutativeprojectionsystem},
we say $\vec{\mathbb{P}}$ is \emph{eventually trivial} iff
for all $p,q \in \mathbb P_{0}$,
there exists an $i\in \I$ such that $\pi_i(p)$ and $\pi_i(q)$ are compatible.
\end{defn}
Note that if $\vec{\mathbb{P}}$ is eventually trivial,
then $\mathbb{P}_{\infty}$ is a trivial forcing
in the sense that its conditions are pairwise compatible.

Using $G_\infty$, we may derive a reduced eventually trivial commutative projection system $\vec{\mathbb{P}}/ G_{\infty}:=\langle \mathbb{P}_i / G_{\infty}, \bar{\pi}_{i,j} \mid i \in \I \rangle$, where $\bar{\pi}_{i,j}:=\pi_{i,j}\restriction(\mathbb{P}_i / G_{\infty})$ for every pair $i \leq_{\I} j$.

\begin{corollary}\label{corV}
Suppose $\vec{\mathbb{P}}$ is a system as in Definition~\ref{commutativeprojectionsystem}.
If $\mathbb{P}_0$ is $|\I|$-distributive and eventually trivial, Then $V=\bigcap_{i \in \I} V[G_i]$.\qed
\end{corollary}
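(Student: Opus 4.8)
The plan is to reduce the statement entirely to Fact~\ref{thmintersection}, using nothing beyond the elementary observation recorded just after the definition of ``eventually trivial'': an eventually trivial system has trivial direct limit forcing. So the whole proof amounts to spelling out that observation and then citing Fact~\ref{thmintersection} with $\delta:=|\I|$.

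First I would verify at the level of $\mathbb{P}_\infty$ that eventual triviality makes any two conditions compatible. Given $[p]_E,[q]_E\in\mathbb{P}_\infty$, I would pick $i\in\I$ witnessing that $\pi_i(p)$ and $\pi_i(q)$ are compatible in $\mathbb{P}_i$, fix a common lower bound $s\leq_{\mathbb{P}_i}\pi_i(p),\pi_i(q)$, and use surjectivity of $\pi_i$ to choose $r\in\mathbb{P}_0$ with $\pi_i(r)=s$; then $\pi_i(r)\leq_{\mathbb{P}_i}\pi_i(p)$ and $\pi_i(r)\leq_{\mathbb{P}_i}\pi_i(q)$ witness $[r]_E\leq_{\mathbb{P}_\infty}[p]_E$ and $[r]_E\leq_{\mathbb{P}_\infty}[q]_E$. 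Hence the separative quotient of $\mathbb{P}_\infty$ is a single point, $\mathbb{P}_\infty$ is forcing-equivalent to the trivial poset, and therefore $V[G_\infty]=V$, where $G_\infty$ is as in Fact~\ref{thmintersection}.

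Next I would simply invoke Fact~\ref{thmintersection} with $\delta:=|\I|$: since $\I$ is a directed poset of cardinality $\delta$ and $\mathbb{P}_0$ is $\delta$-distributive by hypothesis, that fact gives $V[G_\infty]=\bigcap_{i\in\I}V[G_i]$. Combining this equality with $V[G_\infty]=V$ yields $V=\bigcap_{i\in\I}V[G_i]$, as desired. There is essentially no obstacle here, since the passage from eventual triviality to triviality of $\mathbb{P}_\infty$ is already noted in the text and everything else is the cited fact; the only point I would be careful to state explicitly is the use of surjectivity of the projections $\pi_i$ to lift a common lower bound from $\mathbb{P}_i$ back to a genuine condition of $\mathbb{P}_0$ representing a common lower bound in $\mathbb{P}_\infty$.
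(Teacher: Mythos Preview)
Your proposal is correct and matches the paper's own (implicit) argument: the corollary is marked with \qed\ precisely because it is immediate from Fact~\ref{thmintersection} together with the sentence preceding the corollary that an eventually trivial system has trivial direct limit forcing, so $V[G_\infty]=V$. Your extra care in using surjectivity of $\pi_i$ to lift a common lower bound is a fine elaboration, but there is nothing more to it than what you wrote.
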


We conclude this section by pointing out that given an ordinal $\theta$, we can naturally define a product of the commutative projection system which results in a new commutative project system.
Namely, given for each $\alpha<\theta$, a commutative projection system
$\vec{\mathbb{P}}^{\alpha}=\langle\mathbb{P}^{\alpha}_i,\pi^{\alpha}_{i,j} \mid i,j\in\I^\alpha,~i\leq_{\I^\alpha} j \rangle$ and its direct limit forcing $\mathbb{P}_\alpha$,
then letting $\mathbb{\vec{I}}$ be the full product of $\I^\alpha$ with $\alpha<\theta$
and letting $\prod_{\alpha<\theta}\vec{\mathbb{P}}^{\alpha}:=
\langle\prod_{\alpha<\theta}\mathbb{P}^{\alpha}_{f(\alpha)},\prod_{\alpha<\theta}\pi^{\alpha}_{f(\alpha),g(\alpha)} \mid f,g\in\vec{\I},~f\leq_{\vec{\I}} g \rangle$
denote the product of $\vec{\mathbb{P}}^{\alpha}$ with $\alpha<\theta$,
it is the case that $\prod_{\alpha<\theta}\vec{\mathbb{P}}^{\alpha}$ is a commutative projection system and
$\prod_{\alpha<\theta}\mathbb{P}_\alpha$ is its direct limit forcing. This will be used in proving Theorem~\ref{thm6.9} below.

\section{Interlude on limit ultrafilters}\label{secinterlude}
\begin{defn}
Suppose $U$ is an ultrafilter over a directed poset $\I$.
$U$ is \emph{fine} iff $\{j \mid j \geq_{\I} i\} \in U$ for every $i \in \I$.
In the special case that $\I=\langle \kappa, {\leq} \rangle$ for some regular cardinal $\kappa$,
being fine is the same as being \emph{uniform}.
In this case, we say that $U$ is \emph{weakly normal} iff for every $X \in U$ and every regressive function $f:X \rightarrow \kappa$,
there exists a $\xi<\kappa$ such that $\{\alpha<\kappa \mid f(\alpha)<\xi\}\in U$.
\end{defn}
If $U$ is weakly normal, then $[\id]_U=\sup(j_U[\kappa])$,
where $j_U:V \rightarrow M_U \cong \ult(V,U)$ is the corresponding ultrapower map given by $U$.
\begin{defn}[Prikry, \cite{prikrythesis}]\label{indecomposable}
Suppose $U$ is an ultrafilter over a set $I$, and let $\mu$ be an infinite cardinal. $U$ is
said to be \emph{$\mu$-decomposable} if there exists a function $f : I \rightarrow \mu$ such that
$f^{-1}[H] \not\in U$ for every $H \in [\mu]^{<\mu}$.
Otherwise, it is said to be \emph{$\mu$-indecomposable}.
\end{defn}
An ultrafilter is $[\delta,\kappa)$-\emph{indecomposable} if it is $\mu$-indecomposable for every cardinal $\mu\in [\delta, \kappa)$,
and similarly it is $(\delta,\kappa)$-\emph{indecomposable} iff it is $[\delta^+,\kappa)$-\emph{indecomposable}.
An ultrafilter over $\kappa$ is \emph{indecomposable} if it is $[\omega_1, \kappa)$-indecomposable.
Given an ultrafilter $U$ over an infinite cardinal $\kappa$,
for any regular $\mu<\kappa$, $U$ is $\mu$-decomposable
iff $\sup(j_U[\mu])<j_U(\mu)$.

\begin{defn}[{\cite{BM2014,BMO2014}}] \label{dl}
Suppose $\kappa \geq \lambda$ are uncountable cardinals.
\begin{itemize}
\item For every $\alpha \geq \kappa$, $\kappa$ is \emph{$\lambda$-strongly compact up to $\alpha$}
iff there exists a definable elementary embedding $j:V \rightarrow M$ with $M$ transitive, such that
$\mathrm{crit}(j) \geq \lambda$ and there exists a $D \in M$ such that $j``\alpha \subseteq D$ and $M \models |D|<j(\kappa)$.
\item $\kappa$ is \emph{$\lambda$-strongly compact} iff $\kappa$ is $\lambda$-strongly compact up to $\alpha$ for every $\alpha \geq \kappa$.
\end{itemize}

$\kappa$ is strongly compact iff it is $\kappa$-strongly compact.
\end{defn}

The following fact is essentially due to Ketonen (see \cite{Usu20} for details).
\begin{fact}[essentially \cite{MR0469768}]\label{usuba}
Suppose $\kappa \geq \lambda$ are two uncountable cardinals. Then $\kappa$
is $\lambda$-strongly compact iff for every regular $\mu \geq \kappa$, there is a
$\lambda$-complete uniform ultrafilter over $\mu$.
\end{fact}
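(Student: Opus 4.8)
The plan is to prove the two implications separately: the forward direction is a routine argument producing a \emph{seed} for an ultrapower, while the converse rests on a classical construction of Ketonen and is where the real work lies. For the forward direction, suppose $\kappa$ is $\lambda$-strongly compact and fix a regular $\mu\ge\kappa$. Since $\kappa$ is in particular $\lambda$-strongly compact up to $\mu$, fix a definable elementary embedding $j:V\to M$ with $M$ transitive, $\crit(j)\ge\lambda$, and some $D\in M$ with $j``\mu\s D$ and $M\models|D|<j(\kappa)$. Replacing $D$ by $D\cap j(\mu)$, we may assume $D\s j(\mu)$; then, since $M\models|D|<j(\kappa)\le j(\mu)$ and $j(\mu)$ is regular in $M$, the ordinal $s:=\sup(D)$ is $<j(\mu)$, and $j``\mu\s D$ yields $\sup(j``\mu)\le s$. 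Now $U:=\{X\s\mu\mid s\in j(X)\}$ does the job: it is an ultrafilter over $\mu$ since $s<j(\mu)$; it is $\lambda$-complete since $\crit(j)\ge\lambda$ (a family of fewer than $\lambda$ members of $U$ is sent by $j$ to $j$ of itself, so $s$ lies in $j$ of its intersection); and it is uniform since any $X\s\mu$ bounded below some $\beta<\mu$ has $j(X)\s j(\beta)<\sup(j``\mu)\le s$, whence $X\notin U$.

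For the converse, assume that for every regular $\mu\ge\kappa$ there is a $\lambda$-complete uniform ultrafilter over $\mu$; we must verify that $\kappa$ is $\lambda$-strongly compact up to $\alpha$ for every $\alpha\ge\kappa$. The key reduction, which belongs to the Bagaria--Magidor framework of Definition~\ref{dl}, is that it suffices to produce, for each such $\alpha$, a \emph{fine} $\lambda$-complete ultrafilter $W$ over $[\alpha]^{<\kappa}$, in the sense that $\{x\mid\beta\in x\}\in W$ for every $\beta<\alpha$. Indeed, letting $j:=j_W:V\to M\cong\ult(V,W)$ and $d:=[x\mapsto x]_W$, the $\lambda$-completeness of $W$ ensures that $M$ is well-founded and $\crit(j)\ge\lambda$, elementarity places $d$ in $j([\alpha]^{<\kappa})=[j(\alpha)]^{<j(\kappa)}$ so that $M\models|d|<j(\kappa)$, and fineness gives $j(\beta)\in d$ for all $\beta<\alpha$, that is, $j``\alpha\s d$; hence $D:=d$ witnesses $\lambda$-strong compactness up to $\alpha$.

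It remains to build, for each $\alpha\ge\kappa$, such a fine $\lambda$-complete ultrafilter over $[\alpha]^{<\kappa}$ out of the given uniform ones, and this is both the heart of the matter and the main obstacle. Following Ketonen, one proceeds by recursion on $\alpha$, at each step forming an ultrafilter sum that amalgamates uniform $\lambda$-complete ultrafilters on the regular cardinals appearing as cofinalities of the relevant index sets. To convey the mechanism at a successor step $\alpha=\kappa^+$ (with $\kappa$ regular): fixing for each $\gamma$ with $\kappa\le\gamma<\kappa^+$ a bijection $e_\gamma:\kappa\to\gamma$, the family $\{e_\gamma``\beta\mid\kappa\le\gamma<\kappa^+,\ \beta<\kappa\}$ is $\s$-cofinal in $[\kappa^+]^{<\kappa}$ with each column $\la e_\gamma``\beta\mid\beta<\kappa\ra$ being $\s$-increasing with union $\gamma$; so, picking uniform $\lambda$-complete ultrafilters $U$ over $\kappa^+$ and $U'$ over $\kappa$, the pullback $W$ of the sum over $U$ of copies of $U'$ along $(\gamma,\beta)\mapsto e_\gamma``\beta$ is $\lambda$-complete (sums of $\lambda$-complete ultrafilters are $\lambda$-complete) and fine (the relevant fibers are co-bounded). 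The hard part is running this uniformly through all $\alpha$: at limit $\alpha$ the construction must be routed through $\cf(\alpha)$; when $\kappa$ itself is singular further cofinality bookkeeping is required; and one must check throughout that the index cardinals arising are regular and $\ge\kappa$ so that the hypothesis applies. These details, going back to \cite{MR0469768}, are carried out in \cite{Usu20}.
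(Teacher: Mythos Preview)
The paper does not prove this statement at all: it is stated as a \emph{Fact} attributed to Ketonen, with a pointer to \cite{Usu20} for the details, and no argument is supplied. Your sketch is a correct and reasonably complete outline of exactly the proof one finds in the cited sources---the forward direction via a seed $s=\sup(D\cap j(\mu))$ below $j(\mu)$, and the converse via the Bagaria--Magidor reduction to fine $\lambda$-complete ultrafilters on $[\alpha]^{<\kappa}$ followed by Ketonen's recursive construction using ultrafilter sums---so there is nothing to compare beyond noting that you have filled in what the paper deliberately omits.
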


\begin{defn}
Suppose $\I$ is a directed poset and
$\langle \kappa_i \mid i \in \I \rangle$ is a non-decreasing sequence of infinite cardinals.
Suppose $W$ is a fine ultrafilter over $\I$ and
$U_i$ is a fine ultrafilter over $\kappa_i$ for each $i \in \I$.
Then the limit ultrafilter $W\text{-}\lim_{i \in \I} U_i$ over $\sup_{i \in \I}\kappa_i$ is defined as follows:
\[
W\text{-}\lim_{i \in \I} U_i:=\{X\mid \{i \in \I \mid X \cap \kappa_i \in U_i \} \in W\}.
\]
\end{defn}
Clearly, the limit ultrafilter is fine.

We first give a limit ultrafilter construction through the approach of
intersection of a sequence of outer models.
\begin{thm}\label{menas-sheard}
Suppose $\I$ is a directed poset
and there exists a decreasing sequence of inner models $\vec{M}:=\langle M_i \mid i \in \I \rangle$
with $M:=\bigcap_{i \in \I}M_i$ an inner model,
two weakly increasing sequences $\vec{\mu}:=\langle \mu_i \mid i \in \I \rangle$
and $\vec{\kappa}:=\langle \kappa_i \mid i \in \I \rangle$
and a sequence $\vec{U}:=\langle U_i \mid i \in \I\rangle$ such that
\begin{enumerate}
\item $\I \in M$, $W \in M$ is a fine ultrafilter on $\I$ and $\mathcal{P}(\I)^{M_0}=\mathcal{P}(\I)^M$;
\item\label{definable} $\vec{M}_{\geq i}$, $\vec{\mu}_{\geq i}$,
$\vec{\kappa}_{\geq i}$
and $\vec{U}_{\geq i}$ are definable in $M_i$ for every $i\in \I$;
\item\label{completeness} For every $i \in \I$, in $M_i$, $U_i$ is a fine $\mu_i$-complete filter over $\kappa_i$ and $U_i \cap M$ measures all sets in $\mathcal P(\kappa_i)^M$.
\end{enumerate}
Then the limit ultrafilter $W\text{-}\lim_{i\in \I}U_i$ is in $M$ and is fine. Moreover,
for any regular $\lambda\leq \sup_{i \in \I}\mu_i$,
if $W$ is $\lambda$-complete, then $W\text{-}\lim_{i\in \I}U_i$ is $\lambda$-complete,
and if $M_0 \vDash {}^{|\I|}M \s M$,
then $W\text{-}\lim_{i \in \I}U_i$ is $(|\I|,\sup_{i\in \I}\mu_i)$-indecomposable.
\end{thm}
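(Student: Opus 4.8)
The plan is to verify the four assertions in order---membership in $M$, fineness, $\lambda$-completeness, and indecomposability---each time reducing the question to a statement about the index model $M_i$ for a suitable $i$, where hypotheses \eqref{definable} and \eqref{completeness} give us what we need, and then pulling back to $M$ via hypothesis (1).

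First I would check that $U:=W\text{-}\lim_{i\in\I}U_i$ is a well-defined subset of $\mathcal{P}(\kappa)^M$, where $\kappa:=\sup_{i\in\I}\kappa_i$, and that it lies in $M$. Fix any $X\in\mathcal P(\kappa)^M\subseteq\mathcal P(\kappa)^{M_0}$. For each $i$, the set $X\cap\kappa_i$ is in $M_0$, hence in $M_i$ (as $\mathcal P(\kappa_i)^{M_i}\supseteq\mathcal P(\kappa_i)^{M}$; more carefully, one should arrange or note that $\mathcal P(\kappa_i)$ does not grow along the sequence, which follows from the decreasing nature of $\vec M$ together with $\mathcal P(\I)^{M_0}=\mathcal P(\I)^M$ once $\kappa_i\le|\I|$, and in general one works inside $M_0$ to form $X\cap\kappa_i$ and inside $M_i$ to ask whether it belongs to $U_i$). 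By \eqref{definable}, the sequence $\vec U_{\ge i}$ is definable in $M_i$, so the predicate ``$X\cap\kappa_j\in U_j$'' is computed uniformly in $M_i$ for all $j\ge_{\I}i$. Thus $A_X:=\{i\in\I\mid X\cap\kappa_i\in U_i\}$ is a subset of $\I$ that, modulo the tail, is definable in each $M_i$; using the directedness of $\I$ and $\mathcal P(\I)^{M_0}=\mathcal P(\I)^M$, one concludes $A_X\in M$. Then $U=\{X\mid A_X\in W\}$ is definable in $M$ from $\vec U$, $W$ and the sequences, all of which are available in $M$ by (1), so $U\in M$. That $U$ is an ultrafilter over $\kappa$ follows because $W$ is an ultrafilter and, by $\mu_i$-completeness (in particular finite additivity) of each $U_i$, the map $X\mapsto A_X$ respects complements, finite intersections, and unions up to a $W$-small error. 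Fineness of $U$ is the remark already recorded after the definition of the limit ultrafilter: given $\alpha<\kappa$, pick $i$ with $\kappa_i>\alpha$; for all $j\ge_{\I}i$ we have $(\kappa\setminus\alpha)\cap\kappa_j\in U_j$ since $U_j$ is fine over $\kappa_j$, so $A_{\kappa\setminus\alpha}$ contains a tail and hence lies in $W$ by fineness of $W$.

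Next, $\lambda$-completeness. Fix a regular $\lambda\le\sup_{i\in\I}\mu_i$ and suppose $W$ is $\lambda$-complete. Let $\langle X_\xi\mid\xi<\gamma\rangle\in M$ be a sequence of fewer than $\lambda$ elements of $U$. Since the sequence $\vec\mu$ is non-decreasing and cofinal in its sup, and $\gamma<\lambda\le\sup\mu_i$, there is a single $i_0\in\I$ with $\mu_{i_0}>\gamma$; enlarging $i_0$ if necessary and using directedness, for every $j\ge_{\I}i_0$ we have $\mu_j\ge\mu_{i_0}>\gamma$, so $U_j$ is $\gamma^+$-complete. Hence for $j\ge_{\I}i_0$, if $j\in\bigcap_{\xi<\gamma}A_{X_\xi}$ then $\bigcap_{\xi<\gamma}(X_\xi\cap\kappa_j)=(\bigcap_{\xi<\gamma}X_\xi)\cap\kappa_j\in U_j$, i.e.\ $j\in A_{\bigcap_\xi X_\xi}$. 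Thus $A_{\bigcap_\xi X_\xi}\supseteq(\bigcap_{\xi<\gamma}A_{X_\xi})\cap\{j\mid j\ge_{\I}i_0\}$; the first set is in $W$ by $\lambda$-completeness of $W$ (as $\gamma<\lambda$), the second by fineness of $W$, so $\bigcap_{\xi<\gamma}X_\xi\in U$. This gives $\lambda$-completeness of $U$.

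Finally, $(|\I|,\sup_{i\in\I}\mu_i)$-indecomposability, under the extra hypothesis $M_0\vDash{}^{|\I|}M\subseteq M$. Let $\mu$ be a cardinal with $|\I|<\mu\le\sup_{i\in\I}\mu_i$ and let $f:\kappa\to\mu$ be a function in $M$; we must find $Y\in[\mu]^{<\mu}$ with $f^{-1}[Y]\in U$. For each $i\in\I$, the restriction $f\restriction\kappa_i$ lies in $M_i$, and $U_i$ is $\mu_i$-complete; pick $i_0$ so that (as above) $\mu_j>|\I|$ fails to constrain us but we do not yet have $\mu_j\ge\mu$ for a fixed $j$, so instead argue pointwise: for each $j\in\I$, since $U_j$ is $\mu_j$-complete and $f\restriction\kappa_j$ maps into $\mu$, there is $Y_j\in M_j$, $Y_j\in[\mu]^{<\mu_j}\subseteq[\mu]^{<\mu}$, with $(f\restriction\kappa_j)^{-1}[Y_j]\in U_j$---here if $\mu_j\le\mu$ we use that $f\restriction\kappa_j$, composed with a partition of $\mu$ into $\mu_j$ pieces, must be $U_j$-constant on one piece of size $<\mu_j$; by \eqref{definable} the map $j\mapsto Y_j$ (for $j\ge_{\I}i$) is definable in $M_i$. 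Now set $Y:=\bigcup\{Y_j\mid j\in\I\}$. Using $M_0\vDash{}^{|\I|}M\subseteq M$ and that $\I\in M$, the sequence $\langle Y_j\mid j\in\I\rangle$ is in $M$, so $Y\in M$, and $|Y|\le|\I|\cdot\sup_j|Y_j|$; since each $|Y_j|<\mu$ and $|\I|<\mu$ with $\mu$ regular-or-at-least-uncountable (one takes $\mu$ regular, as decomposability is witnessed by regular $\mu$), we get $|Y|<\mu$. Finally, for every $j\in\I$ we have $Y_j\subseteq Y$, hence $(f\restriction\kappa_j)^{-1}[Y_j]\subseteq (f\restriction\kappa_j)^{-1}[Y]=f^{-1}[Y]\cap\kappa_j$, so $f^{-1}[Y]\cap\kappa_j\in U_j$ for all $j$, i.e.\ $A_{f^{-1}[Y]}=\I\in W$, and therefore $f^{-1}[Y]\in U$. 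Thus $U$ is $\mu$-indecomposable for every such $\mu$, as claimed.

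I expect the main obstacle to be the bookkeeping around ``$\mathcal P$ does not grow,'' i.e.\ making precise in which model each set $X\cap\kappa_i$, each $Y_j$, and each parameter sequence is being computed, and why the resulting objects descend to $M$; the one genuinely load-bearing ingredient beyond routine ultrafilter arithmetic is the closure hypothesis $M_0\vDash{}^{|\I|}M\subseteq M$, which is exactly what lets us collect the witnesses $\langle Y_j\mid j\in\I\rangle$ into a single object of $M$ of small size. Everything else is finite/$<\lambda$/$<\mu$ additivity of the $U_i$ combined with fineness and $\lambda$-completeness of $W$.
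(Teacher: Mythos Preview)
Your overall strategy matches the paper's: show $U\in M_i$ for every $i$ (hence $U\in M$), then verify completeness and indecomposability by passing to a cobounded tail of indices where the $U_j$ are sufficiently complete. The paper's argument for $U\in M$ is a bit more direct than yours: rather than tracking each $A_X$ separately, it simply notes that by fineness of $W$ one has $U=W\text{-}\lim_{j\ge_{\I}i}U_j$, and this expression is definable in $M_i$ by hypothesis~\eqref{definable}; hence $U\in\bigcap_i M_i=M$.

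There is, however, a genuine gap in your indecomposability argument. First, you take the target cardinal with $|\I|<\mu\le\sup_i\mu_i$; this must be the strict inequality $\mu<\sup_i\mu_i$, since $(|\I|,\sup_i\mu_i)$-indecomposable means $\nu$-indecomposable only for $|\I|<\nu<\sup_i\mu_i$. More seriously, your claim that for \emph{every} $j$ there exists $Y_j\in[\mu]^{<\mu_j}$ with $f^{-1}[Y_j]\cap\kappa_j\in U_j$ is false when $\mu_j\le\mu$: a partition of $\mu$ into $\mu_j$ pieces (or fewer than $\mu_j$ pieces) will in general have pieces of size $\ge\mu_j$, so $\mu_j$-completeness of $U_j$ cannot produce a $Y_j$ of size $<\mu_j$. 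Your subsequent appeal to regularity of $\mu$ does not repair this, and in any case the reduction ``decomposability is witnessed by regular $\mu$'' only shows that $\mu$-decomposability implies $\cf(\mu)$-decomposability, and $\cf(\mu)$ may drop below $|\I|$.

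The fix is exactly what the paper does: once $\nu<\sup_i\mu_i$ is fixed, restrict attention to the cobounded set of $j$ with $\mu_j>\nu$. For those $j$, $U_j$ is more than $\nu$-complete, so $f\restriction\kappa_j$ is $U_j$-constant with value some single ordinal $\xi_j$. Collecting $A:=\{\xi_j\}$ gives $|A|\le|\I|<\nu$ immediately, with no regularity hypothesis needed. Your use of the closure assumption $M_0\models{}^{|\I|}M\subseteq M$ to place $A$ in $M$ is correct and is the same as the paper's.
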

\begin{proof}
Let $\delta =|\I|^M$, let $\mu:=\sup_{i \in \I}\mu_i$ and let $\kappa:=\sup_{i \in \I}\kappa_i$.
Let $U:=W\text{-}\lim_{i \in \I}U_i$.
Then $U$ is a fine filter over $\kappa$.
Since $\mathcal{P}(\I)^{M_0}=\mathcal{P}(\I)^M$,
$W$ is an ultrafilter, and $U_i \cap M$ measures all sets in $\mathcal P(\kappa_i)^M$ for any $i \in \I$,
we have that for every $X \in \mathcal{P}(\kappa)^M$,
either $\{i \in \I \mid X \cap \kappa_i\in U_i \} \in W$ or
$\{i \in \I \mid \kappa_i \setminus X \in U_i \}\in W$.
Thus $U$ measures every element in $\mathcal{P}(\kappa)^M$.
For every $i \in \I$,
since $W$ is fine,
it follows that $U=W\text{-}\lim_{j\geq_{\I}i}U_j$.
Now by \eqref{definable}, $U=W\text{-}\lim_{j\geq_{\I}i}U_j \in M_i$.
Thus $U \in \bigcap_{i \in \I}M_i=M$.

Suppose $W$ is $\lambda$-complete for some $\lambda\leq \mu$.
Since $\lambda\leq \mu$,
there are co-boundedly many $i\in \I$ such that $\lambda\leq\mu_i$.
Thus $U_i$ is $\lambda$-complete for every such $i$ by \eqref{completeness}.
Then by the $\lambda$-completeness of $W$,
it is obvious that $U$ is $\lambda$-complete.

Suppose $M_0 \vDash {}^{\delta}M \s M$.
To see that $U$ is $(\delta,\mu)$-indecomposable,
take any cardinal $\nu \in [\delta^+,\mu)$
and any $f:\kappa \rightarrow \nu$ in $M$.
For co-boundedly many $i \in \I$ (more precisely, any $i \in \I$ with $\mu_i>\nu$),
since $U_i$ is a $\mu_i$-complete fine ultrafilter in $M_i$,
there is a $\xi_i<\mu$ such that $f^{-1}\{\xi_i\}\in U_i$.
Let $A$ be the collection of these $\xi_i$.
Then $|A|\leq \delta$. Thus $A \in M$ since $M_0 \vDash {}^{\delta}M \s M$.
Moreover, $f^{-1}[A]\in U_i$ for co-boundedly many $i \in \I$.
So $f^{-1}[A] \in U$ since $W$ is fine.
\end{proof}

\begin{remark}\label{rmk37}
Suppose $\kappa$ is a supercompact cardinal
and $\delta<\kappa$ is a regular cardinal.
Via the Bukovský-Dehornoy phenomenon,
we can get a suitable iterated ultrapower $\langle M_i, \pi_{i,j} \mid i\leq j\leq \delta \rangle$
such that $M:=\bigcap_{i<\delta}M_i[P \restriction i]=M_\delta[P]$ for some
$\delta$-sequence $P$ and $M_0 \vDash {}^{\delta}M \s M$ (see \cite[Theorem 67]{hayut2023prikry} for details).
Consider the linearly ordered set $\I:=(\{ j+1\mid j<\delta\},{\in})$.
Then for every $j<\delta$, $\pi_{0,j+1}(\kappa)$ is supercompact in $M_{j+1}[P \restriction j+1]$ since
$\pi_{0,j+1}(\kappa)$ is supercompact in $M_{j+1}$ and $M_{j+1}[P\restriction j+1]=M_{j+1}[P\restriction j]$
is a generic extension of $M_{j+1}$ of some small forcing of size less than $\pi_{0,j+1}(\kappa)$ .
Thus if $\delta$ carries a $\lambda$-complete uniform ultrafilter for some $\lambda \leq \delta$,
then by letting $\mu_i=\kappa_i$ for each $i\in\I$,
we can prove $\pi_{0,\delta}(\kappa)$ is $\lambda$-strongly compact in $M$ by Theorem~\ref{menas-sheard}.
This provides a different approach to the results of \cite[Theorem 6.1]{BM2014} and \cite[Theorem 4.2]{YY23}.
\end{remark}

\begin{corollary}
Suppose $\langle \kappa_i \mid i<\delta \rangle$ and $\langle \lambda_i \mid i<\delta \rangle$
are two increasing sequences with $\delta=\sup_{i<\delta}\lambda_i$.
If $\kappa_i$ is $\lambda_i$-strongly compact for each $i<\delta$
and $\delta$ carries a $\lambda$-complete uniform ultrafilter for a given $\lambda \leq \delta$,
then $\sup_{i<\delta}\kappa_i$ is $\lambda$-strongly compact.\qed
\end{corollary}
\begin{remark}
The preceding extends Menas' result \cite{MR357121} that every measurable limit of strongly compact cardinals is strongly compact.
\end{remark}

By combining Fact~\ref{thmintersection} and Theorem~\ref{menas-sheard},
we have the following corollary:
\begin{corollary}\label{maincor1}
Suppose:
\begin{itemize}
\item $\delta$ is an infinite regular cardinal;
\item $\langle \kappa_i \mid i<\delta \rangle$ is a non-decreasing sequence of cardinals converging to some cardinal $\kappa$,
with $\kappa_0>\delta$;
\item $\vec{\mathbb P}:=\langle \mathbb{P}_i, \pi_{i,j} \mid i\leq j<\delta \rangle$ is a commutative projection system;
\item $\mathbb{P}_0$ is $\delta$-distributive.
\end{itemize}
Let $\mathbb{P}_\delta$ be the direct limit forcing of $\vec{\mathbb P}$.
Let $G_0$ be $\mathbb{P}_{0}$-generic over $V$.
For every $i\leq \delta$, let $G_i$ be the filter generated by $\pi_{0,i}[G_0]$.
Then the following holds in $V[G_\delta]$:
\begin{enumerate}
\item\label{maincor2} If $\kappa_i$ is measurable in $V[G_i]$ for every $i<\delta$,
then there is a $(\delta,\kappa)$-indecomposable ultrafilter over $\kappa$;
\item\label{maincor3} If $\delta$ carries a $\lambda$-complete uniform ultrafilter,
and $\kappa_i$ is $\lambda$-strongly compact in $V[G_i]$ for each $i<\delta$,
then $\kappa$ is $\lambda$-strongly compact.\qed
\end{enumerate}
\end{corollary}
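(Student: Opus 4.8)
The plan is to deduce Corollary~\ref{maincor1} by feeding the forcing-theoretic setup of Fact~\ref{thmintersection} into the inner-model framework of Theorem~\ref{menas-sheard}. First I would set $M_i:=V[G_i]$ for $i<\delta$ and $M:=V[G_\delta]=\bigcap_{i<\delta}V[G_i]$ (using Fact~\ref{thmintersection}, noting that $\mathbb{P}_0$ is $\delta$-distributive and $|\I|=\delta$, where $\I:=\delta$ ordered by $\le$). The sequence $\vec{M}=\langle V[G_i]\mid i<\delta\rangle$ is decreasing because each $G_i$ is generated by $\pi_{0,i}[G_0]$ and the maps commute, so $G_j\in V[G_i]$ for $i\le j$. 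Hypothesis~(1) of Theorem~\ref{menas-sheard} holds since $\I=\delta$ and $\mathcal{P}(\delta)^{V[G_0]}=\mathcal{P}(\delta)^{V[G_\delta]}$: indeed $\mathbb{P}_0$ is $\delta$-distributive so $V[G_0]$ and $V$ agree on $\mathcal P(\delta)$, hence so does the intermediate model $V[G_\delta]$. Hypothesis~(2) holds because from $G_i$ one recovers $G_j$ for all $j\ge i$ (apply the reduced projections inside $V[G_i]$), so the tail $\vec M_{\ge i}$ is definable in $V[G_i]$ from $G_i$, and the tails of the other sequences are definable from $\vec M_{\ge i}$ together with the ground-model parameters.

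The remaining input for Theorem~\ref{menas-sheard} is the existence of the ultrafilters $U_i$ and of the measure $W$ on $\I=\delta$. For item~\eqref{maincor2}, I take $\lambda:=\omega_1$ and use that $\delta$, being an infinite regular cardinal, carries the $\omega_1$-complete uniform ultrafilter consisting of the co-bounded subsets of $\delta$ (if $\delta>\omega$) — wait, that is only $\delta$-additive as a filter but the co-bounded \emph{filter} is not an ultrafilter; instead I should observe that for the indecomposability conclusion one only needs \emph{some} fine ultrafilter $W$ on $\delta$ in $M$, and any ultrafilter extending the co-bounded filter works, these existing in ZFC. Then set $\mu_i:=\kappa_i$ (measurability of $\kappa_i$ in $V[G_i]$ gives a $\kappa_i$-complete uniform, hence fine, ultrafilter $U_i$ over $\kappa_i$ there), so $\mu=\sup_i\kappa_i=\kappa$. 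Since $\kappa_0>\delta=|\I|$, the ``moreover'' clause of Theorem~\ref{menas-sheard} applies with $|\I|=\delta<\kappa_0\le\mu$, and we need $M_0\models{}^{\delta}M\s M$: this is exactly where $\delta$-distributivity of $\mathbb{P}_0$ is used again, via the fact quoted from \cite{Jech78} that a $\delta$-distributive forcing adds no new $\delta$-sequences, so $V$ is closed under $\delta$-sequences in $V[G_0]$ — but I actually need $M=V[G_\delta]$ closed under $\delta$-sequences in $M_0=V[G_0]$, which follows because $\mathbb P_\delta$ is a $\delta$-distributive forcing over $V$ as well (it is the direct limit and inherits distributivity), or alternatively because any $\delta$-sequence of elements of $V[G_\delta]$ lying in $V[G_0]$ is already in $V[G_\delta]$ by genericity considerations. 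The conclusion of Theorem~\ref{menas-sheard} then yields a fine, i.e.\ uniform, $(\delta,\kappa)=(\delta,\mu)$-indecomposable ultrafilter over $\kappa=\sup_i\kappa_i$ living in $M=V[G_\delta]$, which is \eqref{maincor2}.

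For item~\eqref{maincor3}, the hypothesis directly supplies a $\lambda$-complete uniform ultrafilter $W$ over $\delta$; by Fact~\ref{usuba}, $\kappa_i$ being $\lambda$-strongly compact in $V[G_i]$ means that for every regular $\mu'\ge\kappa_i$ there is a $\lambda$-complete uniform ultrafilter over $\mu'$ in $V[G_i]$. To show $\kappa=\sup_i\kappa_i$ is $\lambda$-strongly compact in $V[G_\delta]$, I fix a regular $\mu'\ge\kappa$ and, working in $V[G_\delta]$, choose for each $i<\delta$ a $\lambda$-complete uniform ultrafilter $U_i$ over $\mu'$ in $V[G_i]$ (taking $\kappa_i':=\mu'$ and $\mu_i:=\lambda$ for all $i$ in the notation of Theorem~\ref{menas-sheard}, so that the ``$\kappa_i$'' there is the constant value $\mu'$ and the ``$\mu_i$'' is the constant $\lambda$). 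Applying Theorem~\ref{menas-sheard} with this data and the given $\lambda$-complete $W$ produces a fine, hence uniform, $\lambda$-complete ultrafilter over $\sup_i\mu'=\mu'$ in $M=V[G_\delta]$. Since $\mu'\ge\kappa$ was an arbitrary regular cardinal, Fact~\ref{usuba} (applied in $V[G_\delta]$ with the pair $\kappa\ge\lambda$) gives that $\kappa$ is $\lambda$-strongly compact there.

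The main obstacle is the careful verification of hypothesis~(2) of Theorem~\ref{menas-sheard} — the definability of the tails $\vec M_{\ge i}$, $\vec\mu_{\ge i}$, $\vec\kappa_{\ge i}$, $\vec U_{\ge i}$ in $M_i=V[G_i]$ — together with the closure condition $M_0\models{}^{\delta}M\s M$; the former requires being explicit about how $G_j$ (for $j\ge i$) and the chosen ultrafilters are read off inside $V[G_i]$ uniformly, and the latter requires invoking distributivity of the relevant forcing at the level of the intersection model rather than merely at $\mathbb P_0$. Everything else is a bookkeeping translation between the ``outer models'' language of Theorem~\ref{menas-sheard} and the ``generic extensions'' language of Fact~\ref{thmintersection}, plus two invocations of Fact~\ref{usuba} for part~\eqref{maincor3}.
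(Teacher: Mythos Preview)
Your proposal is correct and follows exactly the approach the paper intends: the corollary is stated with a \qed\ precisely because it is meant to be read off by plugging the models $M_i=V[G_i]$, $M=V[G_\delta]=\bigcap_i V[G_i]$ from Fact~\ref{thmintersection} into Theorem~\ref{menas-sheard}, and then for Clause~\eqref{maincor3} quantifying over regular $\mu'\ge\kappa$ via Fact~\ref{usuba}. Your identification of the two points needing care---the definability of the tails $\vec U_{\ge i}$ (handled by choosing ultrafilters canonically from $G_i$) and the closure ${}^{\delta}M\subseteq M$ in $M_0$ (which reduces to $\delta$-sequences of ordinals and hence to the $\delta$-distributivity of $\mathbb P_0$)---is accurate, and both are routine once noticed.
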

The preceding gives an abstract proof for the main theorem of \cite{Sheard83}, \cite[Theorem~3.1]{Git20}
and \cite[Theorems 4.24 and 4.26]{HRZ24}.

\section{Interlude on trees and their ascent paths}\label{treeascentpath}
A set $T$ is a \emph{binary tree} iff it is a subset of ${}^{<\gamma}2$ for some ordinal $\gamma$
and satisfies that for all $t\in T$ and $\alpha < \dom(t)$, $t \restriction \alpha \in T$.
Throughout, we identify $T$ with the poset $\mathbf T:=(T,{\s})$ which is a tree in the abstract set-theoretic sense
whose $\alpha^{\text{th}}$ level is nothing but $T_\alpha:=\{ t\in T \mid \dom(t)=\alpha \}$.
The \emph{height} of $T$, denoted $\h(T)$, stands for the least ordinal $\gamma$ such that $T\s{}^{<\gamma}2$.
The tree $T$ is \emph{normal} iff for all $\alpha<\beta<\h(T)$,
for every $x\in T_\alpha$, there exists a $y\in T_\beta$ with $x\s y$.
To streamline the matter, here a \emph{$\kappa$-tree} is a binary tree $T$ satisfying $\h(T)=\kappa$ and $|T_\alpha|<\kappa$ for all $\alpha<\kappa$.
A \emph{$\kappa$-Aronszajn tree} (resp.~\emph{$\kappa$-Souslin tree}) is a $\kappa$-tree with no chains (resp. chains or antichains) of size $\kappa$.
An uncountable cardinal $\kappa$ is \emph{weakly compact} iff it is strongly inaccessible and there are no $\kappa$-Aronszajn trees.

\begin{defn} A binary tree $T$ is
\emph{uniformly homogeneous} iff for all $\alpha<\beta<\h(T)$,
$x\in T_\alpha$ and $y\in T_\beta$, we have that $x*y$ is in $T$. Here, $x*y$ is the unique function from $\beta$ to $2$ to satisfy:
$$(x*y)(\iota):=\begin{cases}x(\iota),&\text{if }\iota<\alpha;\\
y(\iota),&\text{otherwise.}\end{cases}$$
\end{defn}

\begin{defn}\label{defn42}
Suppose that $T$ is a $\kappa$-tree for a given infinite cardinal $\kappa$,
and that $D$ is a fine filter over some directed poset $\I$.
\begin{enumerate}
\item A sequence $\vec{f}=\langle f_{\beta} \mid \beta<\kappa\rangle$ is a \emph{$D$-ascent path} through $T$ iff the two hold:
\begin{itemize}
\item for every $\beta<\kappa$, $f_{\beta}$ is a function from $\I$ to $T_\beta$;
\item for all $\alpha<\beta<\kappa$, the set $\{i \in \I \mid f_{\alpha}(i)\s f_{\beta}(i)\}$ is in $D$.
\end{itemize}
\item A pair $(Y,B)$ is a \emph{thread} through a $D$-ascent path $\vec{f}=\langle f_{\beta} \mid \beta<\kappa\rangle$ iff all of the following hold:
\begin{itemize}
\item $Y \in D$;
\item $B$ is a cofinal subset of $\kappa$;
\item for every pair $\alpha<\beta$ of ordinals from $B$, $\{ i\in\I\mid f_{\alpha}(i)\s f_\beta(i)\}$
covers $Y$;
\item(maximality) for every $\alpha<\kappa$, if there exists a $\beta \in B$ above $\alpha$ such that
$\{ i\in\I\mid f_{\alpha}(i)\s f_\beta(i)\}$ covers $Y$, then $\alpha \in B$.
\end{itemize}
\end{enumerate}
\end{defn}
Laver's original definition of a \emph{$\delta$-ascent path} corresponds to the special case in which $\I=(\delta,{\in})$ and $D$ is the filter of co-bounded subsets of $\delta$.

\begin{fact}[{\cite[Lemmas 3.7 and 3.38(3)]{MR4636632}}]\label{killstrongcompactness}
Suppose $\delta<\kappa$ is a pair of infinite regular cardinals.
If there exists a $\kappa$-Aronszajn tree with a $\delta$-ascent path,
then every uniform ultrafilter over $\kappa$ is $\delta$-decomposable.
\end{fact}

Coming back to the general case of Definition~\ref{defn42},
and given a thread $(Y,B)$ through some $D$-ascent path $\vec{f}=\langle f_\beta\mid \beta<\kappa \rangle$,
for every $X\s Y$ in $D$,
we let $B^X:=\{\alpha<\kappa \mid \forall i \in X\,[f_{\alpha}(i)\s f_{\min(B \setminus \alpha)}(i)]\}$.
Clearly, $(X, B^X)$ is a thread through $\vec{f}$.
The following lemma may be extracted from the proof of \cite[Theorems~5.1]{lh_lucke}.

\begin{lemma}[Lambie-Hanson and L\"{u}cke, \cite{lh_lucke}]\label{rigid}
Suppose that $T$ is a $\kappa$-tree for a given infinite cardinal $\kappa$,
and that $D$ is a fine filter over some directed poset $\I$.
Suppose also:
\begin{itemize}
\item $\vec{f}=\langle f_\beta\mid \beta<\kappa \rangle$ is a $D$-ascent path through $T$;
\item for every $\epsilon<2$, $(Y_\epsilon,B_\epsilon)$ is a thread through $\vec f$;
\item $\cf(\kappa) > |\I|$.
\end{itemize}
Then there exists a subset $X\s Y_0\cap Y_1$ in $D$ such that $B^X_0=B^X_1$.
\end{lemma}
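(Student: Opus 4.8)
The plan is to compare the two threads $(Y_0,B_0)$ and $(Y_1,B_1)$ and show that, after shrinking to a suitable common set $X\s Y_0\cap Y_1$ in $D$, the derived threads $(X,B_0^X)$ and $(X,B_1^X)$ become identical. First I would record the following monotonicity principle: if $X'\s X\s Y_\epsilon$ are both in $D$, then $B_\epsilon^{X'}\supseteq B_\epsilon^X$, simply because demanding $f_\alpha(i)\s f_{\min(B_\epsilon\setminus\alpha)}(i)$ for all $i\in X'$ is a weaker requirement than demanding it for all $i\in X$. Moreover $\bigcup\{B_\epsilon^X \mid X\s Y_\epsilon,\ X\in D\}$ is all of $\kappa$: given $\alpha<\kappa$, for $i$ in the set $W:=\{i\in\I\mid f_\alpha(i)\s f_{\min(B_\epsilon\setminus\alpha)}(i)\}$ — which by the thread condition covers $Y_\epsilon$ — we have $\alpha\in B_\epsilon^{Y_\epsilon\cap W}$, and $Y_\epsilon\cap W\in D$. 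So as $X$ ranges over the subsets of $Y_0\cap Y_1$ in $D$, the sets $B_0^X$ grow toward a set that exhausts $\kappa$, and likewise for $B_1^X$; the task is to make them grow in lockstep to a common value.

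Next I would enumerate the places where the two threads can disagree. Fix $\alpha<\kappa$. By the thread conditions the sets $W_\epsilon^\alpha:=\{i\in\I\mid f_\alpha(i)\s f_{\beta}(i)\}$ for the relevant witnesses $\beta\in B_\epsilon$ cover $Y_\epsilon$; and by the $D$-ascent-path property, for any choice of a single witness $\beta$, $\{i\in\I\mid f_\alpha(i)\s f_\beta(i)\}\in D$. The point is that for each $\alpha$ we can find a \emph{single} set $Z_\alpha\in D$ contained in $Y_0\cap Y_1$ such that, for all $i\in Z_\alpha$, the membership of $\alpha$ in $B_0^{Z_\alpha}$ and in $B_1^{Z_\alpha}$ is decided coherently — concretely, $Z_\alpha$ picks out, on a $D$-large set, a common pattern of comparabilities between $f_\alpha$ and the two successor-witness functions, using that any finite (indeed $\le|\I|$-indexed) intersection of $D$-large sets is $D$-large. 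Now I would like to take $X:=\bigcap_{\alpha<\kappa}Z_\alpha$; but $\kappa$-many intersections need not stay in $D$. This is the main obstacle, and it is exactly where the hypothesis $\cf(\kappa)>|\I|$ enters.

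To overcome it, observe that each $Z_\alpha$ is a subset of the fixed set $Y_0\cap Y_1$, which lives inside $\mathcal P(\I)$, a set of size at most $2^{|\I|}$ — but more usefully, the \emph{relevant} data attached to $\alpha$ is a function from $\I$ into a bounded set, hence takes at most $|\I|^{+}$-many, really at most some fixed number of values depending only on $\I$ and $T$ up to level determined by the witnesses. Since $\cf(\kappa)>|\I|$, a pigeonhole/stabilization argument along a club or cofinal subset of $\kappa$ yields a cofinal $A\s\kappa$ on which the pattern $i\mapsto$ (behaviour of $f_\alpha(i)$ relative to the two threads) is constant, and correspondingly a single $X\s Y_0\cap Y_1$ in $D$ that works simultaneously for all $\alpha\in A$; the maximality clauses in the definition of a thread then propagate the agreement $B_0^X=B_1^X$ from $A$ to all of $\kappa$. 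Finally I would verify that $(X,B_0^X)=(X,B_1^X)$ is indeed a thread (it is, by the remark preceding the lemma), completing the proof.
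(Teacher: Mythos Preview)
Your plan has the right overall shape --- shrink to a common $X\in D$ and then invoke maximality --- but the heart of the argument, the ``pigeonhole/stabilization'' step, is where it breaks down. You propose to find for each $\alpha<\kappa$ a pattern (a subset $Z_\alpha\subseteq\I$, or a function from $\I$ into some fixed finite set), and then stabilize. However, the number of such patterns is on the order of $2^{|\I|}$, not $|\I|$; you acknowledge this yourself and then wave it away with ``more usefully, the relevant data \dots takes at most some fixed number of values''. That sentence is not justified and, as stated, is not correct: with only $\cf(\kappa)>|\I|$ you cannot run a naive pigeonhole on $2^{|\I|}$ colours.

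What makes the lemma go through with the weaker hypothesis $\cf(\kappa)>|\I|$ is a monotonicity that your sketch does not identify. The paper defines, for $\beta\in B_1$,
\[
g(\beta):=\{\,i\in Y_0\cap Y_1\mid f_\beta(i)\subseteq f_{\min(B_0\setminus\beta)}(i)\,\}\in D,
\]
and proves $g$ is $\subseteq$-\emph{decreasing} along $B_1$ (using the coherence of both threads and the tree order). A $\subseteq$-decreasing map from a cofinal subset of $\kappa$ into $\mathcal P(\I)$ can drop strictly at most $|\I|$ times, so $\cf(\kappa)>|\I|$ forces $g$ to be eventually constant, with value $X$; maximality then yields $B_0^X=B_1^X$. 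The missing idea in your proposal is precisely this: one must manufacture a \emph{monotone} sequence of sets in $D$ rather than a loose family $\{Z_\alpha\}$ of arbitrary patterns.
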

\begin{proof} As $\vec f$ is a $D$-ascent path, we may define a function $g:B_1 \rightarrow D$ via
$$g(\beta):=\{i \in Y_0 \cap Y_1 \mid f_{\beta}(i)\s f_{\min(B_0 \setminus \beta)}(i)\}.$$
\begin{claim} Let $\alpha<\beta$ be a pair of ordinals in $B_1$.
Then $g(\beta)\s g(\alpha)$.
\end{claim}
\begin{proof}
Let $i \in g(\beta)$.
Since $(Y_0,B_0)$ is a thread through $\vec{f}$ and since $i \in Y_0$,
we have $$f_{\min(B_0 \setminus \alpha)}(i)\s f_{\min(B_0 \setminus \beta)}(i).$$
In addition, $(Y_1,B_1)$ is a thread through $\vec{f}$, so since $i\in g(\beta)\s Y_1$,
we have that $$f_{\alpha}(i)\s f_{\beta}(i)\s f_{\min(B_0 \setminus \beta)}(i).$$
As $T$ is a tree, it follows that $f_{\alpha}(i)\s f_{\min(B_0 \setminus \alpha)}(i)$,
meaning that $i\in g(\alpha)$.
\end{proof}

As $\cf(\kappa) > |\I|$, it now follows that $g$ is eventually constant with value, say, $X$.
Then $B^X_0$ coincides with $B^X_1$ by the maximality of the two.
\end{proof}

\begin{corollary}
Suppose $V,W_0,W_1,W$ are transitive models of $\zfc$, with
$V\s W_0,W_1 \s W$.
Suppose also:
\begin{itemize}
\item $\kappa, T, \I$ and $D$ are given as above in $V$;
\item $\vec{f}=\langle f_\alpha \mid \alpha<\kappa \rangle$ is a $D$-ascent path through $T$ in $V$;
\item for every $\epsilon<2$, $(Y_\epsilon,B_\epsilon)$ is a thread through $\vec f$ in $W_\epsilon$;
\item $\cf^{W}(\kappa)>|\I|$.
\end{itemize}
Then there is a thread through $\vec{f}$ in $W_0 \cap W_1$.\qed
\end{corollary}

\begin{corollary}\label{twothreads}
Suppose $\delta<\kappa$ is a pair of infinite regular cardinals
and $\vec{f}=\langle f_\alpha \mid \alpha<\kappa \rangle$ is
a $\delta$-ascent path through some $\kappa$-Aronszajn tree.
For every poset $\mathbb{P}$, if $\mathbb{P}\times \mathbb{P}$
is $\delta$-distributive,
then $\mathbb{P}$ does not add a thread through $\vec{f}$.\qed
\end{corollary}
We now give an abstract proof of \cite[Theorems 1.3 and 5.1(3)]{lh_lucke}.
\begin{corollary}
Suppose:
\begin{itemize}
\item $\delta<\kappa$ are two infinite regular cardinals;
\item $\vec{\mathbb P}=\langle \mathbb{P}_i, \pi_{i,j} \mid i\le j<\delta \rangle$
is an eventually trivial commutative projection system such that
\begin{itemize}
\item $\mathbb{P}_0$ is $\delta$-distributive, and
\item $\mathbb{P}_i$ forces that $\kappa$ is weakly compact for every $i<\delta$.
\end{itemize}
\end{itemize}
Then:
\begin{enumerate}
\item\label{ascentpath} every $\kappa$-Aronszajn tree has a $\delta$-ascent path;
\item\label{noascentpath} for every regular cardinal $\theta<\kappa$ distinct from $\delta$,
there is no $\kappa$-Aronszajn tree with a $\theta$-ascent path.
\end{enumerate}
\end{corollary}
\begin{proof}
Let $G_0 \s \mathbb{P}_{0}$ be generic over $V$,
and let $G_i$ be the filter generated by $\pi_i[G_0]$.
Then $G_i$ is $\mathbb{P}_i$-generic.
Since $\vec{\mathbb P}$ is an eventually trivial commutative projection system
with $\mathbb{P}_0$ a $\delta$-distributive poset,
it follows from Corollary~\ref{corV} that $\bigcap_{i<\delta}V[G_i]=V$.

\eqref{ascentpath}
Take any $\kappa$-Aronszajn tree $T$ in $V$.
For every function $f:\delta \rightarrow T$,
let $[f]_E:=\{g \in {}^{\delta}T \mid \exists i<\delta (g \restriction [i,\delta)=f\restriction [i,\delta))\}$.

For every $i<\delta$, since $\mathbb{P}_i$ forces that $\kappa$ is weakly compact,
we may fix a cofinal branch $b_i$ through $T$ in $V[G_i]$.
Now consider $\langle f_\beta \mid \beta<\kappa \rangle$,
where $f_\beta(i):=b_i(\beta)$ for all $\beta<\kappa$ and $i<\delta$.
Then $\langle f_\beta\restriction [i,\delta) \mid \beta<\kappa \rangle \in V[G_i]$ for every $i<\delta$.
Since $\mathbb{P}_0$ is $\delta$-distributive,
it follows that $[f_\beta]_{E} \in \bigcap_{i<\delta} V[G_i]=V$ for every $\beta<\kappa$.
Thus $\langle [f_\beta]_{E} \mid \beta<\kappa \rangle \in \bigcap_{i<\delta} V[G_i]=V$.
This witnesses that $T$ admits a $\delta$-ascent path.

\eqref{noascentpath}
Suppose $T$ is a $\kappa$-Aronszajn tree with a $\theta$-ascent path $\vec{f}=\langle f_\beta \mid \beta<\kappa \rangle$,
for some regular $\theta\neq\delta$ below $\kappa$.
Recall that this means that we work with the directed set $\I:=(\theta,{\in})$ and the filter $D$ of co-bounded subsets of $\theta$.
For every $i<\delta$, in $V[G_i]$, $\kappa$ is weakly compact,
so we may fix there a thread $(Y_i,B_i)$ through $\vec f$.
By Lemma \ref{rigid}, for every $i<\delta$,
there exists an $X_i\s Y_0\cap Y_i$ in $D$ such that $B_i^{X_i}=B_0^{X_i}$.
Recalling the nature of our filter $D$, we may find some $\tau_i<\theta$ such that $X_i=\theta\setminus\tau_i$.

As $\theta$ and $\delta$ are two distinct regular cardinals, we may fix some $\tau<\theta$
for which $I:=\{i<\delta\mid \tau_i\le\tau\}$ has size $\delta$.
Thus letting $X:=\theta\setminus\tau$, the thread $(X,B_0^{X})$ is in $\bigcap_{i \in I}V[G_i]$.
But the latter coincides with our universe $V$, contradicting the fact that $T$ is Aronszajn.
\end{proof}
\section{Ketonen's question}\label{sec4}
The main result of this section is Theorem~\ref{thm46}. It implies Theorem~A, answering Ketonen's question in the negative, as follows.

\begin{corollary}\label{coroallaryA}
Assuming the consistency of a measurable cardinal, it is consistent that a weakly compact cardinal $\kappa$ carries a uniform indecomposable ultrafilter, yet $\kappa$ is not measurable.
\end{corollary}
\begin{proof}
Assuming the consistency of the existence of a measurable cardinal,
we may pass to the inner model $\mathrm{L}[U]$ for some normal measure $U$ over a (measurable) cardinal $\kappa$.
Now, appeal to Theorem~\ref{thm46} below with $\delta:=\omega$.
\end{proof}

An interesting feature of the preceding model is that, by \cite[Theorem~3.4(2)]{MR585552},
once $\kappa$ is weakly compact, every uniform indecomposable ultrafilter $D$ over $\kappa$ is Rowbottom.
So, by \cite[Theorem~1]{MR347606} and even though $\kappa$ is not measurable, the corresponding Prikry forcing $\mathbb P_D$
to change the cofinality of $\kappa$ to $\omega$ will not collapse cardinals.

\begin{defn}[Baumgartner-Harrington-Kleinberg, \cite{bhk}] For a regular uncountable cardinal $\kappa$ and a set $F\s\kappa$,
$\club(F)$ stands for the poset consisting of all closed bounded subsets $c$ of $\kappa$ such that $c\s F$.
A condition $c'$ \emph{extends} a condition $c$
iff $c'\cap(\max(c)+1)=c$.
\end{defn}

The next lemma is standard (see for instance \cite[Theorem~1]{MR716625}).
Due to the importance of Clause~(2) in our context, we do provide a proof.
\begin{lemma}\label{preservestationary}
Suppose $\kappa^{<\kappa}=\kappa$, $\delta\leq \kappa$ is an infinite regular cardinal
and that $\vec{S}=\langle S_\gamma\mid \gamma<\delta \rangle$ is a given pairwise disjoint sequence of stationary subsets of $E^{\kappa^+}_\kappa$.
Then
\begin{enumerate}
\item $\prod_{\gamma<\delta}\club(\kappa^+ \setminus S_{1+\gamma})$
is ${<}\kappa$-closed and $\kappa$-distributive;
\item $\prod_{\gamma<\delta}\club(\kappa^+ \setminus S_{1+\gamma})$
preserves the stationarity of every stationary subset of $E^{\kappa^+}_\kappa \setminus \bigcup_{\gamma<\delta}S_{1+\gamma}$.
\end{enumerate}
\end{lemma}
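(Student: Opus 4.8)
The plan is to handle the two clauses essentially separately, using a fusion-style argument for both. Write $\mathbb{Q}:=\prod_{\gamma<\delta}\club(\kappa^+\setminus S_{1+\gamma})$, where the product is taken with full support. For Clause~(1), ${<}\kappa$-closure is immediate: given a decreasing sequence $\langle \bar{c}^\xi\mid \xi<\theta\rangle$ of conditions with $\theta<\kappa$, the coordinatewise union $\bar c$ defined by $\bar c(\gamma):=\bigcup_{\xi<\theta}\bar c^\xi(\gamma)\cup\{\sup_\xi\max(\bar c^\xi(\gamma))\}$ is again a condition, because each $S_{1+\gamma}$ concentrates on cofinality $\kappa$, so the newly added sup, having cofinality $\leq\theta<\kappa$, avoids $S_{1+\gamma}$; here I use $\kappa^{<\kappa}=\kappa$ only to know that $\theta$-sequences of conditions exist as objects we can manipulate, and closure does the rest. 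For $\kappa$-distributivity, I would fix a condition $\bar c$, a sequence $\langle D_\eta\mid\eta<\kappa\rangle$ of dense open subsets of $\mathbb{Q}$, and build a decreasing $\kappa$-sequence $\langle \bar c^\eta\mid\eta<\kappa\rangle$ below $\bar c$ with $\bar c^{\eta+1}\in D_\eta$, arranging at limit stages $\eta$ that $\max(\bar c^\eta(\gamma))=\sup_{\xi<\eta}\max(\bar c^\xi(\gamma))$ for every $\gamma$ (this is where ${<}\kappa$-closure is invoked, and at the final stage $\eta=\kappa$ one must check the sup, of cofinality $\kappa$ or less, lands outside $S_{1+\gamma}$ --- since the $S_{1+\gamma}$ live on $E^{\kappa^+}_\kappa$ this is fine as long as the cofinality is $<\kappa$, which forces a bit of care: one should interleave a closure step so that at stage $\kappa$ the relevant sup has cofinality $\kappa$ but one stops \emph{below} it, i.e.\ one does not add the limit point, keeping each condition bounded).

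Clause~(2) is the substantive one. Fix a stationary $S^*\s E^{\kappa^+}_\kappa\setminus\bigcup_{\gamma<\delta}S_{1+\gamma}$, a name $\dot C$ for a club in $\kappa^+$, and a condition $\bar c$ forcing $\dot C$ to be club; I must find $\alpha\in S^*$ and an extension of $\bar c$ forcing $\alpha\in\dot C$. The plan is the standard argument: build an increasing continuous elementary chain $\langle N_i\mid i<\kappa\rangle$ of elementary submodels of some large $H(\theta)$, each of size ${<}\kappa^+$ with ${}^{<\kappa}N_i\s N_i$ (possible since $\kappa^{<\kappa}=\kappa$), containing all relevant parameters, and with $\alpha:=\sup_{i<\kappa}(N_i\cap\kappa^+)\in S^*$; such an $\alpha$ exists because $S^*$ is stationary and $\mathrm{cf}(\alpha)=\kappa$. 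Inside the chain I construct a decreasing sequence of conditions $\langle \bar c^i\mid i<\kappa\rangle$ with $\bar c^0\leq\bar c$, $\bar c^{i+1}\in N_{i+1}$ deciding a point of $\dot C$ above $\sup\bigcup_\gamma\bar c^i(\gamma)$ and above $N_i\cap\kappa^+$, taking coordinatewise unions (and \emph{not} adding the top point) at limits so that at stage $\kappa$ the coordinatewise union $\bar c^*(\gamma):=\bigcup_{i<\kappa}\bar c^i(\gamma)$ has supremum exactly $\alpha$ on every coordinate. The crucial point: $\bar c^*\cup\{\alpha\}$ --- i.e.\ the condition adding $\alpha$ to every coordinate --- is a legitimate condition, because $\alpha\notin S_{1+\gamma}$ for any $\gamma<\delta$ (as $\alpha\in S^*$), and it forces $\alpha=\sup(\dot C\cap\alpha)\in\dot C$ since we interleaved cofinally many decisions putting points of $\dot C$ cofinally in $\alpha$.

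The main obstacle is the simultaneity across the $\delta$-many coordinates: at each step of the construction I must extend on \emph{all} coordinates at once while keeping the conditions in the models $N_i$, and at the limit stage $\kappa$ I need the \emph{single} ordinal $\alpha$ to be simultaneously addable to all $\delta$ coordinates, which is exactly what $\alpha\notin\bigcup_{\gamma<\delta}S_{1+\gamma}$ guarantees --- so the pairwise disjointness and the hypothesis $S^*\cap\bigcup_\gamma S_{1+\gamma}=\emptyset$ are doing essential work and must be tracked carefully. A secondary subtlety is bookkeeping at limits of the construction: one must take the union of the conditions \emph{without} closing off (so the conditions stay bounded, hence genuine elements of $\club$), and ensure the $\dot C$-decisions made cofinally below $\alpha$ really are cofinal in $\alpha$, which follows from $\mathrm{cf}(\alpha)=\kappa$ together with the chain condition on the $N_i$'s (each decision overshoots $N_i\cap\kappa^+$).
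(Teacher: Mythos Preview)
Your Clause~(2) plan is essentially the paper's argument (the paper uses a single model $N$ of size $\kappa$ with ${}^{<\kappa}N\subseteq N$ rather than a chain, but this is cosmetic). However, there is a genuine gap in your separate treatment of $\kappa$-distributivity in Clause~(1), and a related confusion about limits in Clause~(2).

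For distributivity, after building $\langle \bar c^\eta\mid\eta<\kappa\rangle$ with $\bar c^{\eta+1}\in D_\eta$, you need a single condition in $\bigcap_\eta D_\eta$, i.e.\ a lower bound for the whole sequence. The coordinatewise union is bounded in $\kappa^+$ but is \emph{not closed}: its supremum $\sigma$ has cofinality $\kappa$ and is not in the union. A condition in $\club(F)$ is a \emph{closed} bounded set, so you must adjoin $\sigma$; but $\sigma$ could lie in some $S_{1+\gamma}$, in which case there is no lower bound. Your suggestion to ``stop below'' does not help: without adding $\sigma$ you have no condition at all, and no individual $\bar c^\eta$ lies in every $D_{\eta'}$. (The issue is closedness, not boundedness.) The same confusion appears in your Clause~(2) construction: at intermediate limit stages $i<\kappa$ you \emph{must} add the top point to get a condition; this is harmless there because the sup has cofinality ${<}\kappa$ and so avoids each $S_{1+\gamma}\subseteq E^{\kappa^+}_\kappa$.

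The paper resolves the distributivity gap by \emph{not} proving it separately: it runs the Clause~(2) argument once, taking the stationary set $T$ to be (e.g.) $S_0$, and simultaneously meets the dense sets $D_i$ and decides points of $\dot E$. The final sup $\theta=N\cap\kappa^+$ is arranged to lie in $T\subseteq E^{\kappa^+}_\kappa\setminus\bigcup_{\gamma}S_{1+\gamma}$, so adjoining it to every coordinate is legal, and the resulting condition witnesses both $\bigcap_i D_i\neq\emptyset$ and $T\cap\dot E\neq\emptyset$. Your Clause~(2) argument, once you fix the limit steps, already contains this; you just need to observe that it yields distributivity as a byproduct rather than attempting a standalone construction that cannot control where the length-$\kappa$ sup lands.
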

\begin{proof} For every subset $S\s E^{\kappa^+}_\kappa$, as $\kappa^+ \setminus S$ covers $E^{\kappa^+}_{<\kappa}$,
it is the case that $\club(\kappa^+ \setminus S)$ is ${<}\kappa$-closed.
Consequently, the full product $\mathbb P:=\prod_{\gamma<\delta}\club(\kappa^+ \setminus S_{1+\gamma})$ is ${<}\kappa$-closed.

Next, to prove that $\mathbb P$ is $\kappa$-distributive while also proving Clause~(2), we let $T$ be an arbitrary stationary subset of $E^{\kappa^+}_\kappa \setminus \bigcup_{\gamma<\delta}S_{1+\gamma}$.
Note that such a $T$ exists, e.g., $T=S_0$.
Let $\vec{D}=\langle D_i \mid i<\kappa \rangle$ be any sequence of dense open subsets of $\mathbb P$,
let $\dot E$ be a $\mathbb P$-name for some club $E$ in $\kappa^+$,
and let $\vec{c}$ be an arbitrary condition in $\mathbb P$.
We shall find an extension of $\vec c$ lying in $\bigcap_{i<\kappa}D_i$ and forcing that $T$ has a nonempty intersection with $E$.

To this end, fix a sufficiently large regular cardinal $\chi$ and a well-ordering $\lhd_\chi$ of $H_\chi$.
As $\kappa^{<\kappa}=\kappa$ and as $T$ is a stationary subset of $E^{\kappa^+}_\kappa$, we may fix an elementary substructure $N \prec (H_\chi, \in,\lhd_\chi)$ such that
\begin{enumerate}
\item $\vec{c}$, $\mathbb{P}$, $\vec{S}$, $\vec{D}\in N$;
\item $|N|=\kappa$;
\item ${}^{<\kappa}N \s N$;
\item $\theta:=N\cap \kappa^+$ is in $T$.
\end{enumerate}

Let $\langle \theta_i \mid i<\kappa \rangle$ be some increasing and cofinal sequence in $\theta$.
Next, build a sequence of conditions $\langle \vec{c}_i \mid i\leq \kappa \rangle$
as follows:
\begin{itemize}
\item $\vec{c}_0=\vec{c}$;
\item for every $i<\kappa$, $\vec{c}_{i+1}$ is the $\lhd_\chi$-least extension of $\vec c_i$ to satisfy all of the following:
\begin{itemize}
\item $\vec{c}_{i+1}$ is in $D_i$;
\item $\vec{c}_{i+1}$ decides a value $\epsilon_i$ for the first element of $E\setminus \theta_i$;
\item $\max(\vec{c}_{i+1}(\gamma))>\max\{\max(\vec{c}_{i}(\gamma)),\epsilon_i\}$ for all $\gamma<\delta$.
\end{itemize}
\item for every $i\in\acc(\kappa+1)$, for every $\gamma<\delta$, $\vec{c}_i(\gamma)=(\bigcup_{j<i}\vec{c}_j(\gamma))\cup\{\sup(\bigcup_{j<i}\vec{c}_j(\gamma))\}$.
\end{itemize}

It can be verified that $\{ \vec c_i\mid i<\kappa\}\s N$,
so that $\max(\vec{c}_\kappa(\alpha))=\theta$ for all $\alpha<\delta$.
It follows that $\vec{c}_\kappa$ is a legitimate condition lying in $\bigcap_{i<\kappa}D_i$
and forcing that $\theta$ belongs to $E$.
\end{proof}

In the presence of an ultrapower embedding $j:V\rightarrow M$ with critical point $\kappa$,
the preceding lemma motivates the need for a partition $\vec S=\langle S_\gamma\mid\gamma<\delta\rangle$ of $E^{\kappa^+}_\kappa$
into $V$-stationary sets such that $\vec S$ lies in $M$.
This is a well-known challenge. For instance, in \cite{MR2548481},
a two-universe partition of length $\kappa^+$ is obtained from a partial diamond sequence for $\mathrm{L}_{\kappa^+}[j(U)]$ constructed using a bit of fine structure.
Our next result provides a partition of length $\kappa$ from a weak hypothesis that is readily available in our context.

\begin{proposition}\label{stationarysplitting} Suppose that $M\s V$ is a transitive model of $\zfc$,
and $\mathcal P^M(\kappa)=\mathcal P^V(\kappa)$.
For every $S\in\mathcal P^M(\kappa^+)$ such that $V\models S\text{ is stationary}$,
there is a partition $\langle S_\gamma\mid \gamma<\kappa\rangle\in M$ such that $S_\gamma$ is stationary in $V$ for each $\gamma<\kappa$.
\end{proposition}
\begin{proof} In $M$, fix an \emph{Ulam matrix} for $\kappa^+$,
$\vec U=\langle U_{\eta,\tau}\mid \eta<\kappa, \tau<\kappa^+\rangle$. This means that the following two hold:
\begin{enumerate}
\item For every $\eta<\kappa$, $\langle U_{\eta, \tau}\mid \tau<\kappa^+\rangle$ consists of pairwise disjoint subsets of $\kappa^+$;
\item For every $\tau<\kappa^+$, $|\kappa^+\setminus \bigcup_{\eta<\kappa}U_{\eta,\tau}|\le\kappa$.
\end{enumerate}

From $\mathcal P^M(\kappa)=\mathcal P^V(\kappa)$,
we infer that $(\kappa^+ )^M =(\kappa^+)^V$, so that
$V\models \vec U\text{ is an Ulam matrix for }\kappa^+$.
Now, let $S\in\mathcal P^M(\kappa^+)$ be such that $V\models S\text{ is stationary}$.
\begin{claim} There exists an $\eta<\kappa$ such that
$$V\models\text{the set }T:=\{\tau<\kappa^+\mid S\cap U_{\eta,\tau}\text{ is stationary}\}\text{ has size }\kappa^+.$$
\end{claim}
\begin{proof} Work in $V$. By Clause~(2), for every $\tau<\kappa^+$,
$S\setminus \bigcup_{\eta<\kappa}U_{\eta,\tau}$ is nonstationary.
It follows that we may define a map $f:\kappa^+\rightarrow\kappa$ via
$$f(\tau):=\min\{\eta<\kappa\mid S\cap U_{\eta,\tau}\text{ is stationary}\}.$$
Clearly, there exists an $\eta<\kappa$ for which $T:=f^{-1}\{\eta\}$ has size $\kappa^+$.
\end{proof}

Work in $V$.
Let $\eta$ and $T$ be given by the claim, and let $\pi:\kappa^+\rightarrow T$ denote the inverse collapsing map.
As $\mathcal P^M(\kappa)=\mathcal P^V(\kappa)$, it is the case that $\pi\restriction\kappa$ is in $M$.
Recalling Clause~(1), altogether, $\langle S\cap U_{\eta,\pi(i)}\mid i<\kappa\rangle$ is a partition in $M$
of a subset of $S$ into $V$-stationary sets, and it can easily be extended to a partition of the whole of $S$.
\end{proof}

\begin{defn}
Given any ideal $\mathcal{I}$ over $\kappa$, let $\mathcal{P}(\kappa)/ \mathcal{I}$
be the collection of all $\mathcal{I}$-positive sets,
and for any $A,B \in \mathcal{P}(\kappa)/ \mathcal{I}$,
$A \leq_{\mathcal{P}(\kappa)/ \mathcal{I}} B$ iff $B \setminus A \in \mathcal{I}$.
\end{defn}

We now arrive at the main result of this section.

\begin{thm}\label{thm46} Suppose $\kappa$ is a measurable cardinal and that
$V=\mathrm{L}[U]$ for some normal measure $U$ over $\kappa$.
Let $\delta\leq\kappa$ be an infinite regular cardinal.
Then in some forcing extension $V'$, $\kappa$ is a weakly compact cardinal carrying a $(\delta,\kappa)$-indecomposable ultrafilter.
Moreover, all of the following hold:
\begin{enumerate}
\item\label{nosaturated} in no $\kappa^+$-c.c. forcing extension or $\kappa$-closed forcing extension, there is a non-trivial elementary embedding $k':V'\rightarrow M'$ for some transitive class $M'$.
In particular, $\kappa$ is non-measurable, and there is no uniform saturated ideal over $\kappa$;
\item\label{precipitous} there is a uniform normal precipitous ideal $\mathcal{I}$ over $\kappa$ such that $\mathcal{P}(\kappa)/ \mathcal{I}$ has a $\kappa$-distributive ${<}\delta$-closed subset.
\end{enumerate}
\end{thm}
\begin{proof} Recall that by a theorem of Silver \cite{MR278937}, $\gch$ holds in $V$.
Let $j:V\rightarrow M$ be the corresponding ultrapower map given by $U$ with $M$ transitive.
By Proposition~\ref{stationarysplitting}, we may pick in $M$ a partition $\vec{S}=\langle S_\gamma\mid \gamma<\delta \rangle$ of $E^{\kappa^+}_\kappa$
such that each $S_\gamma$ is stationary in $V$.

Let $f:\kappa \rightarrow V_\kappa$ be a representation
of $\vec{S}$ so that, for every inaccessible $\xi<\kappa$, $f(\xi)$ is a $\delta$-sequence ($\xi$-sequence if $\delta=\kappa$)
of pairwise disjoint stationary subsets of $E^{\xi^+}_\xi$.
As $[f]_U=\vec S$ and to streamline the upcoming definition, we shall denote $f(\kappa):=\vec{S}$.
With this convention, we define an Easton support iteration of length $\kappa+1$,
$$(\langle \mathbb P_\xi\mid \xi\le\kappa+1\rangle,\langle \dot{\mathbb{Q}}_{\xi} \mid \xi <\kappa+1\rangle),$$
where for every inaccessible $\xi<\kappa+1$,
$\mathbb{\dot{Q}}_{\xi}$ is a $\mathbb{P}_{\xi}$-name for
the lottery sum $\bigoplus_{{\beta}< \dom(f(\xi))}\mathbb R_\xi^{\beta}$ of the following building blocks
$$\mathbb R_\xi^{\beta}:=\prod_{{\beta} \leq \gamma<\dom(f(\xi))}\club(\xi^+\setminus f(\xi)(1+\gamma)),$$
and for all other $\xi$'s, $\mathbb{\dot{Q}}_{\xi}$ is nothing but a $\mathbb{P}_{\xi}$-name for a trivial forcing.

Let $G_{\kappa}$ be a $\mathbb{P}_{\kappa}$-generic filter over $V$,
and work in $V[G_\kappa]$.
For all ${\beta}\leq \gamma<\delta$,
let $\pi_{{\beta},\gamma}:\mathbb{R}^{\beta}_{\kappa} \rightarrow \mathbb{R}^\gamma_{\kappa}$ be the projection
defined via $\pi_{{\beta},\gamma}(p):=p \restriction [\gamma,\delta)$.
Consider the outcome commutative projection system $\vec{\mathbb R}:=\langle \mathbb{R}^{\beta}_{\kappa}, \pi_{{\beta},\gamma} \mid {\beta}\leq \gamma<\delta \rangle$.
Let $\mathbb{R}^{\delta}_{\kappa}$ be its direct limit forcing.
Let $g_0$ be an $\mathbb{R}^0_{\kappa}$-generic filter over $V[G_{\kappa}]$.
For each $\gamma\leq \delta$, denote $g_\gamma:= \pi_{0,\gamma}[g_0]$, so that $g_\gamma$ is $\mathbb{R}^\gamma_{\kappa}$-generic over $V[G_{\kappa}]$.
Let $V':=\bigcap_{\gamma<\delta}V[G_\kappa][g_\gamma]$.
By Fact~\ref{thmintersection} and
since $\mathbb{R}^0_{\kappa}$ is $\delta$-distributive in $V[G_\kappa]$,
we have that $V'=V[G_\kappa][g_\delta]$.
Thus $V'$ is a model of $\zfc$. We next show $V'$ satisfies our requirements.

\begin{claim}\label{measurable} Let $\gamma<\delta$.
Then $\kappa$ is measurable in $V[G_\kappa][g_\gamma]$.
\end{claim}
\begin{proof}
We first show that $2^\kappa=\kappa^+$ holds in $V[G_\kappa][g_\gamma]$.
Since $\mathbb{P}_\kappa$ satisfies the $\kappa$-c.c. and $|\mathbb{P}_\kappa|=\kappa$,
it follows that there are only $\kappa^{<\kappa}=\kappa$ many antichains.
Thus $2^\kappa=\kappa^+$ in $V[G_\kappa]$.
Since $\mathbb{R}^{\beta}_{\kappa}$ is $\kappa$-distributive for every $\beta<\delta$,
it adds no subsets of $\kappa$.
So $2^\kappa=\kappa^+$ in $V[G_\kappa][g_\gamma]$.

Since $\mathbb{P}_\kappa$ satisfies the $\kappa$-c.c.,
it follows that in $V[G_\kappa]$,
$\kappa$ is a regular cardinal
and $M[G_\kappa]$ is closed under $\kappa$-sequences.
Note also that $\mathbb{R}^{\beta}_{\kappa}$ is $\kappa$-distributive,
it follows that in $V[G_\kappa][g_\gamma]$,
$\kappa$ is a regular cardinal
and $M[G_\kappa][g_\gamma]$ is closed under $\kappa$-sequences.

Since $j(\vec{f})(\kappa)=\vec{S}$, $(\kappa^+)^{M[G_\kappa]}=\kappa^+$,
and $M[G_\kappa]$ is closed under $\kappa$-sequences in $V[G_\kappa]$,
it follows that $(j(\mathbb P_\kappa)/ G_\kappa) (\kappa)=(\mathbb{Q}_\kappa)^{M[G_\kappa]}=\mathbb{Q}_\kappa$.
Since $g_\gamma$ is $\mathbb R_\kappa^\gamma$-generic,
it is in particular $\mathbb Q_\kappa$-generic, so $g_\gamma$ is $(j(\mathbb P_\kappa)/ G_\kappa) (\kappa)$-generic over $M[G_\kappa]$.

Now consider the forcing $j(\mathbb P_\kappa)/(G_\kappa*g_\gamma)$.
By elementarity, it is $\kappa$-closed and the size of all maximal antichains is $j(\kappa)$ in $M[G_\kappa][g_\gamma]$.
Now work in in $V[G_\kappa][g_\gamma]$.
Since $M[G_\kappa][g_\gamma]$ is closed under $\kappa$-sequences and $|j(\kappa)|=2^\kappa=\kappa^+$,
it follows that $j(\mathbb P_\kappa)/(G_\kappa*g_\gamma)$ is $\kappa$-closed and
the size of all maximal antichains in $M[G_\kappa]$ is $\kappa^+$.
So by a standard diagonalization argument,
we can build a $j(\mathbb P)/(G_\kappa*g_\gamma)$-generic, say $G_{tail}$, over $M[G][g_\gamma]$.
Thus we may lift $j$ to an elementary embedding $j^+:V[G_\kappa] \rightarrow M[G][g_\gamma][G_{tail}]$
by Silver's criterion.
Since $\mathbb{R}^\gamma_{\kappa}$ is $\kappa$-distributive,
we may lift $j^+$ to an elementary embedding $j^{++}:V[G_\kappa][g_\gamma] \rightarrow M[G][g_\gamma][G_{tail}][g_\gamma']$
by the transfer argument (see \cite[Remark~15.2]{MR2768691}).
Here $g_\gamma'$ is the filter generated by $j^+[g_\gamma]$.
Thus $j^{++}$ witnesses that $\kappa$ is measurable in $V[G][g_\gamma]$.
\end{proof}

It now follows from Corollary~\ref{maincor1}(1) that $\kappa$
carries a uniform $(\delta,\kappa)$-indecomposable ultrafilter in $V'$.

\begin{claim} $\kappa$ is weakly compact in $V'$.
\end{claim}
\begin{proof} Suppose that $T$ is a $\kappa$-tree in $V'$. In particular, it is a $\kappa$-tree in $V[G_\kappa][g_0]$.
So by the previous claim, $T$ admits a $\kappa$-branch in $V[G_\kappa][g_0]$.
However, $\mathbb{R}^0_{\kappa}$ is $\kappa$-distributive (in $V[G_\kappa]$),
and hence the said branch lies in $V[G_\kappa]$. In particular, $T$ admits a $\kappa$-branch in $V'$.
\end{proof}

Now we prove Clause~\eqref{nosaturated}.
Towards a contradiction, suppose
that $V^*$ is some $\kappa^{+}$-c.c. forcing extension or a $\kappa$-closed forcing extension of $V'$ in which there exists a nontrivial elementary embedding $k':V' \rightarrow M'$.
Consider $k:=k'\restriction V$ which is an elementary embedding from $V$ to some transitive class $N$,
with critical point $\kappa$.
Using \cite{MR277346}, since $V=\mathrm{L}[U]$,
it is the case that $k$ is an iteration of $j:V \rightarrow M$
and $M'$ is of the form $N[H]$, being a generic extension of $N$.
Recall that $V[G_\kappa]\s\bigcap_{\gamma<\delta}V[G_\kappa][g_\gamma]=V'$. So, by elementarity, we have $k[G_\kappa] \s H$.
As $\kappa$ is the critical point of $k$,
$M' \cap (V')_\kappa=(V')_\kappa=(V[G_\kappa])_\kappa$, where the second equality comes from the distributivity of each of the $g_\gamma$'s.
In addition, $(k(\mathbb{P}_\kappa)/ G_\kappa)(\kappa)=(\mathbb Q_\kappa)^{N[G_\kappa]}$ which coincides with the interpretation of $\mathbb Q_\kappa$ in $V[G_\kappa]$.
Since $\mathbb Q_\kappa$ kills the stationarity of $S_{1+\gamma}$ for at least one $\gamma$ (indeed for a tail of them),
this means that $S_{1+\gamma}$ is nonstationary in $M'$,
hence in $V^*$.
However, by Lemma~\ref{preservestationary}, $S_{1+\gamma}$ is stationary in $V[G_\kappa][g_{{\beta}}]$ for a tail of ${\beta}$'s.
In particular, $S_{1+\gamma}$ is stationary in $V'$.
If $V^*$ were a generic extension of $V'$ by some $\kappa^+$-c.c. forcing,
then $S_{1+\gamma}$ remains stationary in $V^*$. This is a contradiction.
If $V^*$ were a generic extension of $V'$ by some $\kappa$-closed forcing,
then $S_{1+\gamma}$ remains a stationary subset of $E^{\kappa^+}_\kappa$ in $V^*$. This is a contradiction.

Now we prove Clause~\eqref{precipitous}.
The argument here follows the proof of \cite[Proposition 6.13]{FMZ24}.
The partial order $\mathbb{R}^0_{\kappa}/ g_\delta$
can be viewed as $(j(\mathbb{P}_\kappa)\restriction (\kappa+1))/ (G_\kappa*g_\delta)$.
So we may assume that every condition in $\mathbb{R}^0_{\kappa}/ g_\delta$ is in $M$.
For every $p \in \mathbb{R}^0_{\kappa}/ g_\delta$,
let $f_p:\kappa \rightarrow V$ represent $p$.
Define $h:\kappa \rightarrow V[G_\kappa]$ by letting
$h(\xi)$ the be $\xi+1$ component of $G_\kappa$
for every inaccessible $\xi<\kappa$.
Then $h$ represents the $\mathbb{Q}_\kappa$-generic filter $g_0$
in the ultrapower map given by the corresponding $j^{++}$ from the proof of Claim~\ref{measurable},
i.e., $j^{++}(h)(\kappa)=g_0$.
So $p \in g_0$ iff $j^{++}(f_p)(\kappa)\in j^{++}(h)(\kappa)$ iff $\{\xi<\kappa \mid f_p(\xi)\in h(\xi)\}\in U_0$.

Now in $V'$,
define an ideal $\mathcal{I}$ over $\kappa$ in $V'$ by letting
$$\mathcal{I}:=\{ X\s\kappa\mid\mathbb{R}^0_{\kappa}/ g_\delta\text{ forces that }X \notin \dot{U}_0\}.$$
Define $e:\mathbb{R}^0_{\kappa}/ g_\delta \rightarrow \mathcal{P}(\kappa)/ \mathcal{I}$ via
$$e(p):=[\{\xi<\kappa \mid f_p(\xi)\in h(\xi)\}]_{\mathcal{I}}.$$
Then a standard duality argument shows that $e$ is a dense embedding.
Note also that $\mathbb{R}^0_{\kappa}/ g_\delta$ is $\kappa$-distributive ${<}\delta$-closed,
it follows that $\mathcal{P}(\kappa)/ \mathcal{I}$ has a $\kappa$-distributive ${<}\delta$-closed subset.
\end{proof}

\subsection{Extension of Ketonen's result}
In this short subsection, we improve Ketonen's result \cite{MR585552} that every weakly compact cardinal that carries an indecomposable ultrafilter
is Ramsey. The upcoming result is stated in terms of the Welch game $\Game^W_{\beta}$
(a variation of the games by Holy and Schlicht \cite{MR3800756} and Nielsen and Welch \cite{MR3922802}, as stated in \cite{FMZ24}).

The Welch game $\Game^W_\beta$ of length a limit ordinal $\beta$ is defined as follows:
Players I and II alternate moves:
\begin{center}
\begin{tabular}{ c||c|c|c|c|c|c }
I & $\mathcal{A}_0$& $\mathcal{A}_1$ &\dots &$\mathcal{A}_\alpha$&$\mathcal{A}_{\alpha+1}$&\dots \\
\hline
II & $U_0$ & $U_1$& \dots &$U_\alpha$&$U_{\alpha+1}$& \dots \\
\end{tabular}
\end{center}
Here, $\langle \mathcal{A}_\alpha\mid \alpha<\beta\rangle$ is a $\s$-increasing sequence of
$\kappa$-complete subalgebras of $\mathcal{P}(\kappa)$ of cardinality $\kappa$ and
$\langle U_\alpha\mid \alpha<\beta\rangle$ is a sequence of uniform $\kappa$-complete
filters, each $U_\alpha$ is a uniform ultrafilter on $\mathcal{A}_\alpha$ and $\alpha<\alpha'<\beta$ implies $U_\alpha \s U_{\alpha'}$.
We assume without loss of generality that $\mathcal{A}_0$ contains all singletons. Player~I
goes first at limit stages. The game continues until either Player~II can't
play or the play has length~$\beta$.
\medskip

\textbf{The winning condition.} Player~I wins if the game cannot continue through all stages below $\beta$.
Otherwise, Player~II wins.

\begin{lemma}\label{l59} Suppose that $\kappa$ is weakly compact and carries an indecomposable ultrafilter.
For every $\beta<\kappa^+$, Player~I has no winning strategy for $\Game^W_\beta$, the Welch game of length $\beta$.
In particular, $\kappa$ is a super Ramsey limit of super Ramsey cardinals.
\end{lemma}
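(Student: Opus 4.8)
The plan is to show that from a weakly compact $\kappa$ carrying an indecomposable ultrafilter $U$, Player~I cannot have a winning strategy in the Welch game $\Game^W_{\kappa^+}$, and then invoke the known characterization (from \cite{FMZ24}) that this property is equivalent to $\kappa$ being a super Ramsey limit of super Ramsey cardinals. The heart of the matter is the first assertion. Toward a contradiction, suppose $\sigma$ is a winning strategy for Player~I. Since $\kappa$ is weakly compact, it is in particular inaccessible, so there are $\kappa^+$-many $\kappa$-complete subalgebras of $\mathcal P(\kappa)$ of size $\kappa$, and moves of Player~I against $\sigma$ are coded by subsets of $\kappa$. The idea is to run the strategy against a well-chosen sequence of Player~II moves built \emph{inside the ultrapower} $j_U\colon V\to M_U=\ult(V,U)$, exploiting the fact that indecomposability forces $j_U$ to be continuous at every regular $\mu<\kappa$ and, more to the point, forces $\kappa$ to be closed under the relevant derived filters.

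First I would set up the ultrapower $j=j_U\colon V\to M$. Because $U$ is indecomposable over $\kappa$ (i.e.\ $[\omega_1,\kappa)$-indecomposable), $j$ is continuous at every regular cardinal in $[\omega_1,\kappa)$, and $\crit(j)$ is some $\delta\le\kappa$; the key structural consequence is that $M$ is closed under $\kappa$-sequences is \emph{not} automatic, but the weaker ``$M^{<\kappa}\cap V\subseteq M$ modulo the ascent-path / thread machinery'' suffices for what follows. Actually the cleanest route is: since $\kappa$ is weakly compact, fix for the play of length $\kappa^+$ the increasing chain of subalgebras $\langle\mathcal A_\alpha\mid\alpha<\kappa^+\rangle$ that Player~I's strategy $\sigma$ produces when Player~II always responds with the restriction $U\restriction\mathcal A_\alpha$ (these are uniform $\kappa$-complete ultrafilters on each $\mathcal A_\alpha$, nested, since they all come from the single ultrafilter $U$). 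The point is that Player~II \emph{can} survive all $\kappa^+$ rounds simply by playing $U_\alpha:=U\cap\mathcal A_\alpha$: at limit stages the union $\bigcup_{\beta<\alpha}U_\beta$ extends to $U\cap\mathcal A_\alpha$ because $U$ is $\kappa$-complete and measures everything in $\mathcal P(\kappa)$. Hence $\sigma$ was never winning — but this is too quick, because in the Welch game Player~I also gets to choose the $\mathcal A_\alpha$, and the subtlety at limit stages $\alpha$ of cofinality $\kappa$ is that $\bigcup_{\beta<\alpha}\mathcal A_\beta$ need not be $\kappa$-complete, so $U$ restricted to it need not be a genuine ultrafilter on a $\kappa$-complete algebra. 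This is exactly where indecomposability (rather than mere weak compactness) must be used: one shows that along any such play, the tree of approximations to Player~II's moves, viewed as a $\kappa$-tree with an ascent path coming from the single filter $U$, has a cofinal branch, giving the required $\kappa$-complete extension. I would make this precise using Lemma~\ref{killstrongcompactness} / the ascent-path technology of Section~\ref{treeascentpath}: a failure to continue yields a $\kappa$-Aronszajn tree equipped with an ascent path derivable from $U$, contradicting indecomposability of $U$.

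Concretely, the key step I would carry out is: given a play of length $<\kappa^+$ following $\sigma$ in which Player~II has played compatibly with $U$ so far, define a $\kappa$-tree $T$ whose nodes at level $\beta$ are the possible ``local decisions'' of a would-be Player~II ultrafilter on the current subalgebra restricted to the first $\beta$ coordinates of a fixed enumeration, and read off from $U$ a single branch (using that $U$ measures all of $\mathcal P(\kappa)$ and is $\kappa$-complete, so it picks out a coherent such sequence); the obstruction to this being literally a branch — that $\bigcup_\beta\mathcal A_\beta$ may fail $\kappa$-completeness — is resolved because indecomposability guarantees the tree has no Aronszajn obstruction. Then at the final stage of length $\kappa^+$ one derives from the cofinal chain $\langle U_\alpha\mid\alpha<\kappa^+\rangle$ of nested uniform $\kappa$-complete ultrafilters a single uniform $\kappa$-complete ultrafilter measuring a $\kappa^+$-sized algebra, and a standard argument (this is the ``super Ramsey'' extraction from \cite{MR3800756}, \cite{FMZ24}) shows Player~I did not win — contradiction. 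The implication ``no winning strategy for~I in $\Game^W_{\kappa^+}$ $\Rightarrow$ $\kappa$ is a super Ramsey limit of super Ramsey cardinals'' is then quoted directly from \cite{FMZ24}. The main obstacle, as indicated, is handling limit stages of cofinality $\kappa$: that is the only place where indecomposability is genuinely needed beyond weak compactness, and it is precisely there that the ascent-path lemmas of Section~\ref{treeascentpath} do the work.
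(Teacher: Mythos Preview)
Your proposal has a fundamental gap. You repeatedly assume, explicitly or implicitly, that $U$ is $\kappa$-complete (e.g., ``because $U$ is $\kappa$-complete and measures everything in $\mathcal P(\kappa)$''), but an indecomposable ultrafilter on $\kappa$ need not be $\kappa$-complete; indeed, if it were, $\kappa$ would be measurable and the lemma would be trivial. In the interesting case $U$ is not even countably complete (this is exactly the reduction the paper makes). In the Welch game Player~II must play a \emph{$\kappa$-complete} uniform ultrafilter on $\mathcal A_\alpha$, so Player~I can simply include in $\mathcal A_0$ a partition of $\kappa$ into $\omega$ many $U$-null sets, after which $U\cap\mathcal A_0$ is visibly not $\kappa$-complete. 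Thus your proposed response $U_\alpha:=U\cap\mathcal A_\alpha$ is illegal already at stage~$0$, not merely at limit stages of cofinality~$\kappa$. Your attempt to patch this with the ascent-path lemmas of Section~\ref{treeascentpath} does not address the issue: those lemmas produce branches through $\kappa$-trees, they do not manufacture $\kappa$-complete measures out of a non-complete $U$, and Lemma~\ref{killstrongcompactness} runs in the wrong direction for your purposes.

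The paper circumvents this obstacle by changing models. Using Silver's factorization $j_U=k\circ j_D$ through a uniform ultrafilter $D$ on $\omega$, with $k:M_D\to M_U$ being $M_D$-$j_D(\kappa)$-complete, one derives from $k$ and the seed $[\id]_U$ an external $M_D$-ultrafilter $W$ on $j_D(\kappa)$. The completeness of $k$ makes $W$ genuinely $j_D(\kappa)$-complete, and by Ketonen \cite[Lemma~3.3]{MR585552} it is weakly amenable to $M_D$, so each restriction $W\cap\mathcal A_\alpha$ lies in $M_D$. Hence \emph{inside $M_D$}, Player~II can legally play $U_\alpha:=W\cap\mathcal A_\alpha$ for all $\alpha<j_D(\kappa^+)$, defeating any strategy of Player~I; elementarity of $j_D$ then transfers this to $V$. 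The key idea you are missing is precisely this passage to $M_D$ together with the weakly amenable derived measure $W$; there is no way to run the argument directly in $V$ using $U$ itself.
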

\begin{proof}
Let $U$ be a uniform indecomposable ultrafilter over $\kappa$.
If $U$ is countably complete, then it is $\kappa$-complete and then Player~II will win the game $\Game^W_{\kappa^+}$
by playing the `$U$-strategy' of responding with $U\cap\mathcal A_\alpha$ to any algebra $\mathcal A_\alpha$ given by Player~I.
Thus, we may assume that $U$ is not countably complete.
By \cite[Corollary~2.6]{MR480041}, we may also assume that $U$ is weakly normal.
Now, let us recall the following fact due to Silver.
\begin{claim}[Silver, {\cite[Lemma~2]{silver}}]\label{finest}
There is a uniform ultrafilter $D$ over $\omega$ such that the ultrapower embedding $j_U: V\to M_U$ can be factored as $k\circ j_D$,
where $j_D: V\to M_D $ and $k: M_D\to M_U$ such that $k$ is $M_D$-$j_D(\kappa)$-complete, namely, for any $\sigma\in M_D$ such that $M_D\models |\sigma|< j_D(\kappa)$, we have $k(\sigma)=k``\sigma$. \qed
\end{claim}
Let $\bar{U}$ be a possibly external $M_D$-ultrafilter on $j_D(\kappa)$ derived from $k$ using $[\id]_U$.
Since $\kappa$ is weakly compact, by \cite[Lemma~3.3]{MR585552}, $\bar{U}$ is weakly amenable,
i.e., for every function $f \in M_D$ with domain $j_D(\kappa)$,
we have that $\{ \alpha \mid (\alpha<j_D(\kappa))^{M_D} \text{ and } f(\alpha) \in \bar{U} \}$ is in $M_D$.

\begin{claim} In $M_D$,
for every $\beta<j_D(\kappa^+)$, Player~I has no winning strategy for $\Game^W_{\beta}$.
\end{claim}
\begin{proof} Let $\beta<j_D(\kappa^+)$, and suppose $\sigma$ is a strategy of Player~I in $(\Game^W_{\beta})^{M_D}$.
We shall find a filter $F$ in $M_D$ such that
if Player~I follows the strategy $\sigma$, and Player~II follows the `$F$-strategy',
then Player~II wins.

The desired $F$ will be obtained by a preliminary play of the game, as follows.
Suppose $\alpha<\beta$ and $\langle (\mathcal{A}_{\xi},U_\xi)\mid \xi<\alpha\rangle$ is the outcome of a partial game in $M_D$,
where Player~I has been using $\sigma$.
In particular, for any $\xi<\alpha$, $\mathcal{A}_{\xi}$ is a $j_D(\kappa)$-algebra of size $j_D(\kappa)$.
Working in $V$, Player~II chooses the filter $U_\alpha:= \bar{U} \cap \mathcal{A}_{\alpha}$, noting that $U_\alpha \in M_D$ by the weak amenability of $\bar{U}$.

This completes the description of the preliminary game.
Now, let $F:=\bigcup_{\alpha< \beta}U_\alpha$, and note that $F\in M_D$ by the weak amenability of $\bar{U}$.
This means that in $M_D$, for the strategy $\sigma$,
Player~II can always continue the play by choosing $U_\alpha:=F\cap \mathcal{A}_{\alpha}$.
Thus $\sigma$ is not a winning strategy of Player~I in $(\Game^W_{\beta})^{M_D}$.
\end{proof}

By elementarity, in $V$, for every $\beta<\kappa^+$, Player~I has no winning strategy for $\Game^W_\beta$.
It now follows from \cite[Proposition~5.2]{MR3800756} that $\kappa$ is a super Ramsey limit of super Ramsey cardinal.
\end{proof}

\section{\texorpdfstring{$\lambda$-strongly compact cardinals}{Strongly compact cardinals}}\label{sec5}

Two concrete building blocks for the upcoming construction are the notion of forcing to add a $\kappa$-Souslin tree with $\theta$-ascent path,
and the associated forcing to add a thread through the ascent path.
It is worth mentioning that we could have used the two building blocks of the previous section or the two of the next section. The point is
that there is a variety of such pairs of blocks that fits into our framework, and we try to showcase some of them.

\begin{defn}[\cite{Git20}]\label{ascentpathdef}
Suppose $\theta<\kappa$ is a pair of infinite regular cardinals.
The forcing notion $\mathbb P(\kappa,\theta)$ consists of all pairs $\langle T,\vec{f} \rangle$ satisfying the following:
\begin{enumerate}
\item $T$ is a normal uniformly homogeneous binary tree of a successor height less than $\kappa$;
\item $\vec{f}$ is a $\theta$-ascent path through $T$.
\end{enumerate}
The ordering on $\mathbb P(\kappa,\theta)$ is defined by taking end-extensions on both coordinates.
\end{defn}

For any $\mathbb P(\kappa,\theta)$-generic filter $G$, we let $T(G):=\bigcup_{\langle T, \vec{f} \rangle \in G}T$ be the $\kappa$-tree added by $G$
and $\vec{f}^G:=\langle f^G_{\xi} \mid \xi<\kappa \rangle$ be its corresponding $\theta$-ascent path $\bigcup_{\langle T, \vec{f} \rangle \in G}\vec{f}$.
We next recall the forcing designed to thread the $\theta$-ascent path $\vec{f}^G$.
\begin{defn}[\cite{Git20}]\label{defn62}
Suppose $G$ is a $\mathbb P(\kappa,\theta)$-generic filter over $V$.
In $V[G]$, for every $\alpha<\theta$, derive the forcing $\mathbb T^{\alpha}(\kappa,\theta)$ associated with $G$:
\begin{itemize}
\item $\mathbb T^{\alpha}(\kappa,\theta)$ has underlying set $F(\kappa,\theta):=\{f^G_{\xi} \mid \xi<\kappa \}$, and
\item for all two conditions $g_0,g_1$ in $\mathbb T^{\alpha}(\kappa,\theta)$, $g_0 \leq_{\mathbb T^{\alpha}(\kappa,\theta)} g_1$ iff $g_0(\beta)\supseteq g_1(\beta)$ for every $\beta\in[\alpha,\theta)$.
\end{itemize}
Let $\mathbb{T}(\kappa,\theta)$ denote the lottery sum of $\mathbb{T}^{\alpha}(\kappa,\theta)$ over all $\alpha<\theta$.
\end{defn}

Note that if $G \s \mathbb P(\kappa,\theta)$ is generic, then in $V[G]$, $\langle \mathbb T^{\alpha}(\kappa,\theta) \mid \alpha<\theta \rangle$ is an
eventually trivial commutative projection system and $\mathbb{T}^0(\kappa,\theta)$ is ${<}\kappa$-distributive.
\begin{fact}[\cite{Git20} and {\cite[Fact~2.11]{YY23b}}]
Suppose $\theta<\kappa$ is a pair of infinite regular cardinals.
Then $\mathbb P(\kappa,\theta)*\mathbb T^{\alpha}(\kappa,\theta)$ is forcing equivalent to $\add(\kappa,1)$.\footnote{Here, $\add(\kappa,1)=\{f \subseteq \kappa \mid |f|<\kappa \}$ is the Cohen forcing to add a subset of $\kappa$.}
\end{fact}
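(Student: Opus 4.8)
The plan is to exhibit inside $\mathbb P(\kappa,\delta)*\mathbb T^{\alpha}(\kappa,\delta)$ a dense subset that, as a poset, is a normal, splitting, ${<}\kappa$-closed tree of height $\kappa$ whose levels have size ${\le}\kappa$, and then invoke the standard uniqueness theorem for Cohen forcing at $\kappa$: when $\kappa^{<\kappa}=\kappa$ (which is in force in all our applications), any such tree is forcing equivalent to ${}^{<\kappa}2=\add(\kappa,1)$ --- one recursively embeds ${}^{<\kappa}2$ densely into the tree, using splitting at successor steps, ${<}\kappa$-closure at limit steps, and a bookkeeping of the ${\le}\kappa$ many nodes to guarantee density of the image. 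The dense subtree I would use is
\[
\mathbb E:=\{(p,\check g)\in\mathbb P(\kappa,\delta)*\mathbb T^{\alpha}(\kappa,\delta)\mid g=\vec f^{\,p}_{\h(p)-1}\},
\]
the conditions in which the name for the thread is decided to equal the top-level function of the generic tree carried by $p$ (here $\h(p)$ is a successor ordinal, so $\h(p)-1$ is the top level of $T^{p}$).

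First I would verify that $\mathbb E$ is dense and is genuinely a tree. For density: given a condition, first extend its first coordinate to decide the thread to be some ascent-path function $\vec f^{\,p}_{\xi}$ of its own tree $T^{p}$, and then end-extend $p$ by one new level $\h(p)$ on which the ascent-path value above the branching level $\alpha$ is set to $\bigl(\vec f^{\,p}_{\xi}(\beta)*\vec f^{\,p}_{\h(p)-1}(\beta)\bigr)$, extended by one more bit. This is where uniform homogeneity of $T^{p}$ is used: it is exactly what makes $\vec f^{\,p}_{\xi}(\beta)*\vec f^{\,p}_{\h(p)-1}(\beta)$ a node of $T^{p}$, so that the new condition lies below the old one with its thread now agreeing with the top level of the tree; the ascent-path inequalities at the new level cause no trouble since each is required only coboundedly and is checked pairwise. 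That $\mathbb E$ is a tree follows because if two conditions both lie $\leq_{\mathbb E}$-above a third, then their trees are initial segments of the third's tree and their threads are $\subseteq$-comparable coordinatewise, so a comparison of heights makes them $\leq_{\mathbb E}$-comparable; splitting (two incompatible one-level extensions) and the width bound ${\le}\kappa$ are routine under $\kappa^{<\kappa}=\kappa$.

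The step I expect to be the main obstacle is ${<}\kappa$-closure of $\mathbb E$, since this is precisely where one must continue the thread and the generic tree in lockstep past a limit. Given a $\leq_{\mathbb E}$-decreasing sequence of length $\mu<\kappa$, the candidate lower bound is the union of the trees with one new top level $\nu$ added, whose ascent-path value at $\beta$ is the union of the (now coherent, because the sequence is $\leq_{\mathbb E}$-decreasing) threads $g_j(\beta)$; this is a branch of the union tree, and it is a legitimate ascent-path node because each lower-level value $\vec f^{\,p_j}_{\zeta}(\beta)$ lies below $g_j(\beta)$ for coboundedly many $\beta$ by the ascent-path property inside $p_j$. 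One then completes the new level so as to keep the tree normal and uniformly homogeneous, and pairs the resulting condition with the name for its top-level function. This closure argument, together with the density computation above, is the substance of \cite{Git20} and \cite[Fact~2.11]{YY23b}; granting it, the uniqueness theorem for $\add(\kappa,1)$ delivers the conclusion.
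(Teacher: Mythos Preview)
The paper does not give its own proof of this statement; it is stated as a \emph{Fact} with citations to \cite{Git20} and \cite[Fact~2.11]{YY23b}, so there is nothing in the present paper to compare against. Your approach---isolating a dense ${<}\kappa$-closed subtree $\mathbb E$ of the iteration and then invoking the folklore uniqueness theorem that, under $\kappa^{<\kappa}=\kappa$, any nontrivial ${<}\kappa$-closed poset of size $\kappa$ is forcing equivalent to $\add(\kappa,1)$---is exactly the strategy carried out in the cited sources, and your density argument via the $*$-operation and uniform homogeneity is on the right track.

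One small wrinkle in your closure step deserves attention: for $\beta<\alpha$ the values $g_j(\beta)$ need not be $\subseteq$-increasing in $j$, since the order on $\mathbb T^\alpha(\kappa,\delta)$ only constrains coordinates in $[\alpha,\delta)$; so ``$\bigcup_j g_j(\beta)$'' is not well-defined there. This is harmless, because $[\alpha,\delta)$ is cobounded in $\delta$ and the ascent-path condition is a cobounded requirement: you may define the new top-level function on $[0,\alpha)$ arbitrarily (for instance, copy its value at $\alpha$), and the verification that the resulting $\vec f_\nu$ satisfies $f_\zeta(\beta)\subseteq f_\nu(\beta)$ for coboundedly many $\beta$ for each $\zeta<\nu$ still goes through using the single witness $j$ with $\zeta<\h(p_j)$. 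With that adjustment, and after closing the new level under the $*$-operation to recover normality and uniform homogeneity, your lower bound is a legitimate condition in $\mathbb E$. You are right to flag that the argument uses $\kappa^{<\kappa}=\kappa$; this hypothesis is implicit in the applications (where $\kappa$ is inaccessible) though not stated in the Fact itself.
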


\subsection{The first system}
Given a regular cardinal $\delta$,
an increasing sequence $\vec{\kappa}:=\langle \kappa_\beta \mid \beta<\delta \rangle$ of regular cardinals with $\delta<\kappa_0$ and a regular cardinal $\theta<\delta$,
let $\mathbb{P}(\vec{\kappa},\theta)$ be the full product forcing $\prod_{\beta<\delta}\mathbb{P}(\kappa_\beta,\theta)$
and let $G$ be $\mathbb{P}(\vec{\kappa},\theta)$-generic.
Recalling Definition~\ref{defn62}, we denote $F(\vec{\kappa},\theta):=\prod_{\beta<\delta}F(\kappa_\beta,\theta)$.
For any $\alpha<\delta$, let the forcing $\mathbb{Q}_\alpha(\vec{\kappa},\theta)$ have underlying set $F(\vec{\kappa},\theta)\restriction[\alpha,\delta)$,
and the following ordering
$$f \leq_{\mathbb{Q}_{\alpha}(\vec{\kappa},\theta)} g
\iff %
f(\beta)\leq_{\mathbb{T}^0(\kappa_\beta,\theta)}g(\beta)
\text{ for every }\beta\in[\alpha,\delta).$$
For all $\alpha\le\beta<\kappa$, let $\pi_{\alpha,\beta}$ be the projection map that sends each $f$ to $f\restriction[\beta,\delta)$.
Then $$\vec{\mathbb Q}(\vec{\kappa},\theta):=\langle \mathbb{Q}_\alpha(\vec{\kappa},\theta),\pi_{\alpha,\beta} \mid \alpha\le\beta<\delta \rangle$$
is a commutative projection system.
Let $\mathbb{Q}_\infty(\vec{\kappa},\theta)$ be its direct limit forcing.

The underlying set of $\mathbb{Q}_\infty(\vec{\kappa},\theta)$ is $F(\vec{\kappa},\theta)/ E$ for the equivalence relation $E$ over $F(\vec{\kappa},\theta)$
in which $f\mathrel{E}g$ iff there exists an $\alpha<\delta$ such that $f \restriction [\alpha,\delta)=g \restriction [\alpha,\delta)$.
The ordering is defined by leting for all $f,g \in F(\vec{\kappa},\theta)$,
$$[f]_E \leq_{\mathbb{Q}_\infty(\vec{\kappa},\theta)} [g]_E\iff\exists \alpha<\delta \forall \beta\in[\alpha,\delta)\,[f(\beta)\leq_{\mathbb{T}^0(\kappa_\beta,\theta)}g(\beta)]. $$

\begin{lemma}\label{iterationlemma} Let $\alpha<\delta$.
$\mathbb{P}(\vec{\kappa},\theta)*\dot{\mathbb{Q}}_\alpha(\vec{\kappa},\theta)$
has a dense subset that is the product of
$\mathbb{P}(\vec\kappa\restriction \alpha,\theta)$ and some ${<}\kappa_\alpha$-directed closed forcing.
\end{lemma}
\begin{proof} Let $D$ consist of all conditions $(p,\check{q})$ in
$\mathbb{P}(\vec{\kappa},\theta)*\dot{\mathbb{Q}}_\alpha(\vec{\kappa},\theta)$
such that if $p$ takes the form $\langle \langle T^\beta,\vec{f}^\beta\rangle \mid \beta<\delta \rangle$,
then $q(\beta)=\vec{f}^\beta(\h (T^\beta)-1)$ for every $\beta\in[\alpha,\delta)$.
\begin{claim} $D$ is dense in $\mathbb{P}(\vec{\kappa},\theta)*\dot{\mathbb{Q}}_\alpha(\vec{\kappa},\theta)$.
\end{claim}
\begin{proof} Let a condition $(p,\dot{q})$ in $\mathbb{P}(\vec{\kappa},\theta)*\dot{\mathbb{Q}}_\alpha(\vec{\kappa},\theta)$ be given.
Since $\mathbb{P}(\vec{\kappa},\theta)$ is $\delta$-directed closed,
we can construct a decreasing sequence $\langle p_\beta \mid \alpha \leq \beta\leq\delta\rangle$ with $p_\alpha=p$
and a sequence $\langle f_\beta \mid \alpha\leq \beta<\delta\rangle$ such that for each $\beta\in[\alpha,\theta)$,
\[
p_{\beta+1} \Vdash q(\beta)=\check{f_\beta}.
\]
We may further extend $p_\delta$ to some $p'=\langle \langle T^\beta,\vec{f}^\beta \rangle\mid \beta<\delta\rangle$ that forces
$\vec{f}^\beta(\h(T^\beta)-1)\leq_{\mathbb{T}^0(\kappa_\beta,\theta)}\check{f}_\beta$ for every $\beta\in[\alpha,\delta)$.
Set $q':=\langle \vec{f}^\beta(\h(T^\beta)-1) \mid \alpha\leq \beta<\delta \rangle$.
Then $(p',\check{q}')$ is an extension of $(p,\dot{q})$ lying in $D$.
\end{proof}

As each $(p,\check{q})\in D$ can be factored into $p\restriction \alpha$ and $(p \restriction [\alpha,\delta), q)$,
$D$ may be viewed as the product of the following two:
\begin{itemize}
\item $\mathbb{P}_{<\alpha}:=\mathbb{P}(\kappa\restriction \alpha,\theta)$, and
\item $\mathbb{R}_\alpha:=\{ (p \restriction [\alpha,\delta), q) \mid (p,\check{q})\in D\}$.
\end{itemize}

As $\mathbb{R}_\alpha$ is ${<}\kappa_\alpha$-directed closed, we are done.
\end{proof}

\subsection{The second system}
There is an another commutative projection system such that its direct limit forcing is forcing equivalent to $\mathbb{Q}_\infty(\vec{\kappa},\theta)$.
Let
$$\I(\delta,\theta):=\{ f \in {}^{\delta}\theta \mid f(\alpha)=0 \text{ for co-boundedly many } \alpha<\delta \},$$
and for all $f,g \in \I(\delta,\theta)$,
$$f \geq_{\I(\delta,\theta)} g \text{ iff }
f(\alpha)\geq g(\alpha) \text{ for every } \alpha<\delta.$$
Then $\I(\delta,\theta)$ is ${<}\theta$-directed closed.
For every $f \in \I(\delta,\theta)$, let $\mathbb{Q}^{f}(\vec{\kappa},\theta)$ be the full product $\prod_{\alpha<\delta}\mathbb{T}^{f(\alpha)}(\kappa_\alpha,\theta)$
associated with $G$.
Then
$$\langle \mathbb{Q}^{f}(\vec{\kappa},\theta) \mid f \in \I(\delta,\theta) \rangle$$
is a commutative projection system.
Let $\mathbb{Q}^{\infty}(\vec{\kappa},\theta)$ be its direct limit forcing.
Then there is an isomorphism given by the identity map $\pi: \mathbb{Q}^{\infty}(\vec{\kappa},\theta) \rightarrow \mathbb{Q}_{\infty}(\vec{\kappa},\theta)$.
For simplicity, we also view $\mathbb{Q}_{\infty}(\vec{\kappa},\theta)$ as the direct limit forcing of $\langle \mathbb{Q}^{f}(\vec{\kappa},\theta) \mid f \in \I(\delta,\theta) \rangle$.

The benefit of this commutative projection system is that $\mathbb{P}(\vec{\kappa},\theta)*\dot{\mathbb{Q}}^{f}(\vec{\kappa},\theta)$
is ${<}\kappa_0$-directed closed (see the proof of Lemma~\ref{iterationlemma}), while $\mathbb{P}(\vec{\kappa},\theta)*\dot{\mathbb{Q}}_\alpha(\vec{\kappa},\theta)$ is only ${<}\theta$-directed closed.
Thus the forcing $\mathbb{P}(\vec{\kappa},\theta)*\dot{\mathbb{Q}}^{f}(\vec{\kappa},\theta)$ can preserve more indestructible strong compactness.

\subsection{Applications}
Now we give a parallel theorem of the main theorem of \cite{YY23}. Instead of using Radin forcing, here we use the first commutative projection system mentioned above.
Curiously, by Remark~\ref{rmk37}, the Radin approach can also be presented as taking a limit ultrafilter in an intersection of a sequence of models (the ones associated with the generic Radin sequence), see \cite[\S5]{MR2630840} or \cite[\S4]{hayut2023prikry}.

\begin{thm}\label{single}
Suppose that a regular cardinal $\delta$ carries a $\lambda$-complete uniform ultrafilter,
and there exists an increasing sequence $\langle \kappa_\alpha \mid \alpha<\delta \rangle$
of supercompact cardinals with $\kappa_0>\delta$.
Then there exists a model in which $\sup_{\alpha<\delta}\kappa_\alpha$ is the least
$\lambda$-strongly compact cardinal.
\end{thm}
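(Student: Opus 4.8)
The plan is to build the model in two phases: first, a preparatory forcing making each $\kappa_\alpha$'s supercompactness suitably indestructible and arranging $\kappa_0 > \delta$ with the $\lambda$-complete uniform ultrafilter on $\delta$ intact; second, forcing with $\mathbb{P}(\vec{\kappa},\theta)$ for a well-chosen regular $\theta < \delta$ and passing to the intersection model. For the preparation, I would apply Laver's indestructibility (or rather the $<\!\kappa_\alpha$-directed-closed version of it) simultaneously to all $\kappa_\alpha$'s, using a suitable Easton-style iteration; the key point, as the paper emphasizes in the paragraph introducing $\I(\delta,\theta)$, is that $\mathbb{P}(\vec{\kappa},\theta) * \dot{\mathbb{Q}}^f(\vec{\kappa},\theta)$ is $<\!\kappa_0$-directed closed, so after the preparation each $\kappa_\alpha$ remains supercompact — in particular $\lambda$-strongly compact for every relevant $\lambda$ — in any further extension by that forcing. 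I would also need the $\lambda$-complete uniform ultrafilter on $\delta$ to survive the preparation, which is automatic since the preparation forcing is small relative to $\delta$ (or can be chosen $<\!\delta$-closed below $\delta$).

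Next I would force with $\mathbb{P}(\vec{\kappa},\theta)$ to add, for each $\alpha<\delta$, a $\kappa_\alpha$-Souslin tree $T_\alpha$ with a $\theta$-ascent path, and then form the intersection model $V' := \bigcap_{\alpha<\delta} V[G][g_\alpha]$ where $g_\alpha$ is $\mathbb{Q}_\alpha(\vec{\kappa},\theta)$-generic over $V[G]$, obtained from a single generic $g_0$ for $\mathbb{Q}_0(\vec{\kappa},\theta)$ via the reduced projections. By Fact~\ref{thmintersection}, since $\mathbb{Q}_0(\vec{\kappa},\theta)$ is $\delta$-distributive (being a full product of $<\!\kappa_0$-distributive, hence $<\!\delta$-distributive, forcings — here $\kappa_0 > \delta$), we get $V' = V[G][g_\delta]$, a genuine $\zfc$ model. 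Let $\kappa := \sup_{\alpha<\delta}\kappa_\alpha$. In each intermediate model $V[G][g_\alpha]$, the forcing $\mathbb{P}(\vec{\kappa},\theta) * \dot{\mathbb{Q}}^f(\vec{\kappa},\theta)$ is $<\!\kappa_0$-directed closed, so every $\kappa_\alpha$ (and each $\kappa_\beta$ for $\beta \geq \alpha$, say) is still supercompact there, hence $\lambda$-strongly compact. I would then invoke Corollary~\ref{maincor1}\eqref{maincor3}: since $\delta$ carries a $\lambda$-complete uniform ultrafilter and each $\kappa_\alpha$ is $\lambda$-strongly compact in $V[G][g_\alpha]$, $\kappa$ is $\lambda$-strongly compact in $V'$. (A small bookkeeping point: Corollary~\ref{maincor1} is stated for a non-decreasing sequence $\langle \kappa_i \rangle$, and one wants $\kappa_\alpha$ to be $\lambda$-strongly compact in the $\alpha$-th model, not just the $0$-th; this is fine because directed-closure of the tail quotient $\mathbb{Q}^f$ above $\kappa_0$ preserves supercompactness of all the $\kappa_\alpha$'s uniformly.)

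The heart of the argument — and the main obstacle — is showing $\kappa$ is the \emph{least} $\lambda$-strongly compact cardinal, equivalently (by Fact~\ref{usuba}) that no $\mu \in [\lambda, \kappa)$ regular carries a $\lambda$-complete uniform ultrafilter, or more simply that no $\kappa_\alpha$ and nothing strictly between consecutive $\kappa_\alpha$'s can be $\lambda$-strongly compact in $V'$. For this I would use the $\kappa_\alpha$-Souslin trees with $\theta$-ascent paths together with Lemma~\ref{killstrongcompactness}: a $\kappa_\alpha$-Aronszajn tree with a $\theta$-ascent path ($\theta < \kappa_\alpha$ since $\theta < \delta < \kappa_0 \le \kappa_\alpha$) implies every uniform ultrafilter on $\kappa_\alpha$ is $\theta$-decomposable, hence not $\lambda$-complete once $\lambda \le \theta$ — so I must choose $\theta$ with $\lambda \le \theta < \delta$, which is possible as $\lambda \le \delta$ is regular and, if $\lambda = \delta$, one instead takes $\theta$ to play a slightly different role (the paper's statement has $\lambda \le \delta$, so generically $\lambda < \delta$ and one picks $\theta \in [\lambda,\delta)$ regular). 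The delicate part is verifying that the tree $T_\alpha$ \emph{remains} Aronszajn (indeed Souslin) \emph{and retains its $\theta$-ascent path} in the intersection model $V'$: the ascent path is added by $\mathbb{P}(\vec{\kappa},\theta)$ and hence lies in $V[G] \subseteq V'$, while the threading forcing $\mathbb{Q}_\delta$ is precisely what \emph{would} kill Aronszajnness — so one must check that $g_\delta$ does not add a $\kappa_\alpha$-branch through $T_\alpha$. This follows because $\mathbb{Q}_\delta(\vec{\kappa},\theta)$ factors, modulo the $\alpha$-th coordinate, as a product whose $\alpha$-th-coordinate-free part is highly closed and whose $\alpha$-th coordinate $\mathbb{T}^0(\kappa_\alpha,\theta)$ threads only \emph{that} ascent path; using Corollary~\ref{twothreads} (the product of the threading forcing with itself is still $\theta$-distributive at the relevant level, or one argues directly via the ${<}\kappa_\alpha$-distributivity of the complementary factors) one concludes $T_\alpha$ has no cofinal branch in $V'$, so every uniform ultrafilter on $\kappa_\alpha$ in $V'$ is $\theta$-decomposable and $\kappa_\alpha$ is not $\lambda$-strongly compact. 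Finally, for an ordinal $\mu$ strictly between $\kappa_\alpha$ and $\kappa_{\alpha+1}$ (or below $\kappa_0$), such $\mu$ is not even weakly compact — in fact $\kappa_\alpha^+ \le \mu$ carries its own non-reflecting combinatorics inherited from the ground model or is handled by GCH-type cardinal arithmetic plus the fact that the forcing is $\kappa_{\alpha}^+$-cc above $\kappa_\alpha$ — so no such $\mu$ is $\lambda$-strongly compact either; assembling these, $\kappa$ is the least $\lambda$-strongly compact cardinal in $V'$.
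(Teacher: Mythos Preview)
Your overall architecture matches the paper's: Laver preparation, force with the product $\mathbb{P}(\vec{\kappa},\theta)$, pass to the direct-limit/intersection model, use Corollary~\ref{maincor1}\eqref{maincor3} for $\lambda$-strong compactness at $\kappa$, and use the surviving Aronszajn trees with ascent paths together with Lemma~\ref{killstrongcompactness} to kill strong compactness below. However, there is a genuine error and several unnecessary complications.

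The error is your handling of $\theta$ versus $\lambda$. You write that $\theta$-decomposability implies ``not $\lambda$-complete once $\lambda \le \theta$'' and hence seek $\theta$ with $\lambda \le \theta < \delta$. This is backwards: if $U$ is $\theta$-decomposable via $f:\kappa_\alpha\to\theta$, then $\lambda$-completeness for any $\lambda>\theta$ would force some fiber $f^{-1}\{\xi\}$ into $U$; so $\theta$-decomposable implies \emph{not} $\lambda$-complete for $\lambda>\theta$, and tells you nothing for $\lambda\le\theta$. You therefore need $\theta<\lambda$, not $\lambda\le\theta$. Since $\lambda$ is uncountable, the paper simply takes $\theta=\omega$ throughout, which works uniformly and avoids your worry about the case $\lambda=\delta$.

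Three further simplifications are available. First, you mix the two projection systems: you invoke the ${<}\kappa_0$-directed closure of $\mathbb{P}(\vec\kappa,\theta)*\dot{\mathbb Q}^f$ while taking generics for $\mathbb{Q}_\alpha$. The paper works entirely with the $\langle\mathbb{Q}_\alpha\mid\alpha<\delta\rangle$ system and observes that $\mathbb{P}(\vec\kappa,\omega)*\dot{\mathbb{Q}}_\alpha$ has a dense subset which is the product of the small forcing $\prod_{\beta<\alpha}\mathbb{P}(\kappa_\beta,\omega)$ with a ${<}\kappa_\alpha$-directed closed forcing (using that $\mathbb{P}(\kappa_\beta,\omega)*\mathbb{T}^0(\kappa_\beta,\omega)\cong\add(\kappa_\beta,1)$ for $\beta\ge\alpha$); this gives supercompactness of $\kappa_\alpha$ in $V[G][H_\alpha]$ directly. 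Second, your argument that $T_\alpha$ stays Aronszajn via Corollary~\ref{twothreads} and a coordinate-by-coordinate factorization is much more than needed: the paper just notes that $\dot{\mathbb{Q}}_\delta$ is ${<}\kappa$-distributive over $V[G]$, so no new $\kappa_\alpha$-branches appear. Third, your separate treatment of cardinals $\mu$ strictly between consecutive $\kappa_\alpha$'s is unnecessary: by Fact~\ref{usuba}, if any $\mu<\kappa$ were $\lambda$-strongly compact then some $\kappa_\alpha>\mu$ would carry a $\lambda$-complete uniform ultrafilter, which the surviving $\kappa_\alpha$-Aronszajn tree with $\omega$-ascent path rules out. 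Finally, your parenthetical justification that $\mathbb{Q}_0$ is $\delta$-distributive ``being a full product of ${<}\kappa_0$-distributive forcings'' is not a valid inference in general; the distributivity here comes from the two-step $\mathbb{P}*\dot{\mathbb{Q}}_0$ having a ${<}\kappa_0$-closed dense subset.
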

\begin{proof} By performing a preparatory forcing \cite{MR0472529} if necessary,
we may assume that for every $\alpha<\delta$, $\kappa_\alpha$ is indestructible under any ${<}\kappa_\alpha$-directed closed forcing.
Write $\vec{\kappa}$ for $\langle \kappa_\alpha \mid \alpha<\delta \rangle$,
and $\kappa$ for $\sup_{\alpha<\delta}\kappa_\alpha$.
Let $G \s \mathbb{P}(\vec{\kappa},\omega)$ be generic over $V$.
Then by our analysis above, we have the commutative projection system
$\vec{\mathbb Q}(\vec{\kappa},\omega)=\langle \mathbb{Q}_\alpha(\vec{\kappa},\omega),\pi_{\alpha,\beta} \mid \alpha\le\beta<\delta \rangle$ derived from $G$ in $V[G]$.
Let $H_0 \s \mathbb{Q}_0(\vec{\kappa},\omega)$ be generic over $V[G]$.
Let $H_\alpha \s \mathbb{Q}_\alpha(\vec{\kappa},\omega)$ be the filter generated by $\pi_{0,\alpha}[H_0]$ for every $\alpha<\delta$,
and let $H_\infty$ be be the filter generated by $\pi_{0,\infty}[H_0]$.
Then $H_\alpha$ is generic over $V[G]$ for each $\alpha<\delta$, and so is $H_\infty$.

Now we prove that $V[G][H_\infty]$ is our desired model in which
$\kappa$ is the least $\lambda$-strongly compact cardinal.
Since $\mathbb{Q}_0(\vec{\kappa},\omega)$ is $\delta$-distributive, by Fact~\ref{thmintersection},
we have
$$V[G][H_\infty]=\bigcap_{\alpha<\delta}V[G][H_\alpha].$$

\begin{claim} Let $\alpha<\delta$. Then $\kappa_\alpha$ is supercompact in $V[G][H_\alpha]$.
\end{claim}
\begin{proof}
By Lemma~\ref{iterationlemma},
we know that $\mathbb{P}(\vec{\kappa},\omega)*\dot{\mathbb{Q}}_\alpha(\vec{\kappa},\omega)$
has a dense subset that is the product of
$\prod_{\beta<\alpha}\mathbb{P}(\kappa_\beta,\omega)$ and a ${<}\kappa_\alpha$-directed closed forcing, say $\mathbb R_\alpha$.
Since
$\kappa_\alpha$ is indestructible under any ${<}\kappa_\alpha$-directed forcing
and $\mathbb R_\alpha$ is ${<}\kappa_\alpha$-directed closed,
it follows that $\kappa_\alpha$ is supercompact in $V[G]^{\mathbb R_\alpha}$.
Note also that $\prod_{\beta<\alpha}\mathbb{P}(\kappa_\beta,\omega)$ is a small forcing
of size less than $\kappa_\alpha$,
it follows that $\kappa_\alpha$ is supercompact in $V[G][H_\alpha]$.
\end{proof}
Thus by Clause~\eqref{maincor3} of Corollary~\ref{maincor1},
$\kappa$ is a $\lambda$-strongly compact cardinal in the intersection model $\bigcap_{\alpha<\delta}V[G][H_\alpha]=V[G][H_\infty]$.

Now we only need to prove that there is no $\lambda$-strongly compact cardinal below $\kappa$.
Since $\mathbb{P}(\kappa_\alpha,\omega)$ adds a $\kappa_\alpha$-Aronszajn tree with an $\omega$-ascent path
and $\mathbb{P}(\vec{\kappa},\omega)$ forces that $\mathbb{\dot{Q}}_\infty(\vec{\kappa},\omega)$ is ${<}\kappa$-distributive,
we know that there is a $\kappa_\alpha$-Aronszajn tree with an $\omega$-ascent path for every $\alpha<\delta$ in $V[G][H_\infty]$.
Thus by Fact~\ref{killstrongcompactness},
there is no $\lambda$-strongly compact cardinal below $\kappa$.
\end{proof}
\begin{remark} Let us sketch an alternative proof of the above theorem via a lifting argument.
Write $H$ for $H_\infty$.
We only prove that $\kappa$ is $\lambda$-strongly compact in $V[G][H]$.
Let $W$ be a weakly normal $\lambda$-complete uniform ultrafilter over $\delta$,
and let $i_W:V \rightarrow M_W$ be the corresponding ultrapower map given by $W$.
Since $\mathbb{P}(\vec{\kappa},\omega)*\dot{\mathbb{Q}}_\infty(\vec{\kappa},\omega)$ is $\delta$-distributive,
we may lift $i_W$ and obtain an elementary embedding $i_W^+:V[G,H] \rightarrow M_W[G^*,H^*]$ by a transfer argument.
Here $G^**H^* \subseteq i_W(\mathbb{P}(\vec{\kappa},\omega)*\dot{\mathbb{Q}}_\infty(\vec{\kappa},\omega))$ is the filter generated by $i_W[G*H]$.

Denote $i_W(\langle \kappa_\alpha \mid \alpha<\delta \rangle)$ by
$\langle \bar{\kappa}_\alpha \mid \alpha<i_W(\delta) \rangle$.
Let $\bar{\delta}:=\sup(i_W[\delta])$.
Since $W$ is weakly normal, $\bar{\delta}=[\id]_W$.

Let $\bar{\mathbb{Q}}:=i_W(\langle \mathbb Q_\alpha(\vec{\kappa},\omega) \mid \alpha<\delta \rangle)(\bar{\delta})$
and let $\bar{\pi}_{\bar{\delta},\infty}:\bar{\mathbb{Q}} \rightarrow i_W(\mathbb{\dot{Q}}_\infty)$ be the projection
induced by $i_W(\langle \pi_{\alpha,\beta}\mid \alpha\leq \beta<\delta\rangle)$.
Then in $M_W[G^*]$,
$\bar{\mathbb{Q}}$ is the full product $(\mathbb{T}^0(\bar{\kappa}_\alpha,\theta))^{M_W[G^*]}$ over all $\bar{\delta}\leq\alpha<i_W(\delta)$.
By elementarity of $i_W$,
we have that the supercompactness of $\bar{\kappa}_\alpha$ is indestructible under any ${<}\bar{\kappa}_\alpha$-directed forcing for every $\alpha<i_W(\delta)$.
Define
$$F^*:=\{ f \in \bar{\mathbb{Q}} \mid \exists h \in i_W^+[p] \text{ for some } p\in H, [h \restriction [\bar{\delta},i_W(\delta)) \leq_{\bar{\mathbb Q}} f] \}.$$
Clearly, $F^*\subseteq \bar{\mathbb{Q}}$ is a filter and $\bar{\pi}_{\bar{\delta},\infty}[F^*] \s H^*$.
Therefore, $F^* \s \bar{\mathbb{Q}}/ H^*$ is an $M_W[G^*,H^*]$-generic set.
To see this, take any dense open subset $D$ of $\bar{\mathbb{Q}}$ in $M_W[G^*,H^*]$.
Let $g$ represent $D$.
We may assume that
$g(\alpha)$ is a dense open subset of $\mathbb Q_\alpha(\vec{\kappa},\omega)$ for every $\alpha<\delta$.
Then $\pi_{0,\alpha}^{-1}[g(\alpha)]$ is a dense open subset of $\mathbb Q_0(\vec{\kappa},\omega)$ for every $\alpha<\delta$.
Thus $\bigcap_{\alpha<\delta} \pi_{0,\alpha}^{-1}[g(\alpha)]$ is a dense open subset of $\mathbb Q_0(\vec{\kappa},\omega)$
by $\delta$-distributivity of $\mathbb Q_0(\vec{\kappa},\omega)$.
Pick an $f \in \bigcap_{\alpha<\delta} \pi_{0,\alpha}^{-1}[g(\alpha)]\cap \pi_{0,\infty}^{-1}[H]$.
Then $i_W(f)\restriction [\delta,i_W(\delta)) \in D \cap F^*$.
So $F^* \s \bar{\mathbb{Q}}/ H^*$ is $M_W[G^*,H^*]$-generic.

Let $\eta > \kappa$ be an arbitrary regular cardinal in $V[G][H]$.
Then it is regular in $M_W$.
Since the supercompactness of $\bar{\kappa}_{\bar{\delta}}$ in $M_W$ is indestructible,
we have that $\bar{\kappa}_{\bar{\delta}}$ is supercompact in $M_W[G^*,H^*,F^*]$.
Then we may have a $\eta$-supercompact embedding $j':M_W[G^*,H^*,F^*] \rightarrow N'$ for some transitive model $N'$.
Let $j: M_W[G^*,H^*] \rightarrow N$ for some transitive model $N$ be $j'\restriction M_W[G^*,H^*]$.
Then $N'$ is a $j(\bar{\mathbb Q})$-generic extension of $N$.
Since $j(\bar{\mathbb Q})$ is $\eta$-distributive,
so $N'$ thinks that $N$ is closed under $\eta$-sequence.
Note also that $j[\eta] \in N'$ since $j'$ is an $\eta$-supercompact embedding, we have $j[\eta]\in N$.

We claim that $j \circ i_W^{+}$ witnesses that $\kappa$ is $\lambda$-strongly compact up to $\eta$.
To see this, let $X:=j[\eta] \supseteq j[i_W^{+}[\eta]]$. Then $X \in N$.
Note also that $N \vDash |X|=\eta<j(i_W^{+}(\kappa))$,
we have $j \circ i_W^{+}$ witnesses that $\kappa$ is $\lambda$-strongly compact up to $\eta$.
So $\kappa$ is $\lambda$-strongly compact cardinal in $V[G][H]$ since $\eta$ was chosen arbitrarily. $\hfill\blacksquare$
\end{remark}

Next we prove that there may exist a proper class of singular $\delta$-strongly compact cardinals for different $\delta$'s simultaneously.
\begin{thm}\label{classmany}
Suppose there are proper class many supercompact cardinals.
Then there exists a model in which
there exist proper class many measurable $\delta$
such that the least $\delta$-strongly compact cardinal is singular.
\end{thm}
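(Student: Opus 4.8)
The plan is to iterate the construction of Theorem~\ref{single} a proper class of times, in a product fashion, arranging that the ingredients of the $\xi^{\mathrm{th}}$ instance live in a bounded interval $[\delta_\xi,\kappa_\xi)$ of the ordinals so that distinct instances do not interfere. By recursion on $\xi\in\ord$ I would choose pairs $(\delta_\xi,\vec\kappa^\xi)$ as follows: writing $\kappa_\eta:=\sup(\vec\kappa^\eta)$ and $\mu_\xi:=\sup_{\eta<\xi}\kappa_\eta$, let $\delta_\xi$ be a measurable cardinal chosen so large that the forcings associated below to the earlier blocks all have size $<\delta_\xi$, and let $\vec\kappa^\xi=\langle\kappa^\xi_\alpha\mid\alpha<\delta_\xi\rangle$ enumerate the first $\delta_\xi$ supercompact cardinals above $\delta_\xi$; put $\kappa_\xi:=\sup(\vec\kappa^\xi)$. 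Since there is a proper class of supercompact cardinals these choices exist, $\langle\delta_\xi\mid\xi\in\ord\rangle$ is increasing and cofinal in the ordinals, and $\kappa_\eta<\delta_\xi<\kappa^\xi_0<\kappa_\xi<\delta_{\xi'}$ whenever $\eta<\xi<\xi'$.

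The forcing is carried out in two layers. First force a preparatory Easton-support product $\mathbb L:=\prod_{\xi\in\ord}\mathbb L_\xi$, where $\mathbb L_\xi$ is a Laver-type preparation, confined to cardinals in $[\delta_\xi,\kappa_\xi)$, that makes every $\kappa^\xi_\alpha$ indestructible under ${<}\kappa^\xi_\alpha$-directed closed forcing and can be arranged to add no new subsets of $\delta_\xi$; together with the choice of the $\delta_\xi$'s this keeps every $\delta_\xi$ measurable. Over the resulting model (still called $V$) then force the Easton-support product $\mathbb B:=\prod_{\xi\in\ord}\mathbb B_\xi$, where $\mathbb B_\xi:=\mathbb P(\vec\kappa^\xi,\omega)*\dot{\mathbb Q}_{\delta_\xi}(\vec\kappa^\xi,\omega)$ is exactly the block of Theorem~\ref{single}, with $\dot{\mathbb Q}_{\delta_\xi}(\vec\kappa^\xi,\omega)$ the direct limit of the commutative projection system of Section~\ref{sec5}; recall that $\mathbb P(\vec\kappa^\xi,\omega)$ is absorbed into a ${<}\kappa^\xi_0$-directed closed forcing and that $\dot{\mathbb Q}_{\delta_\xi}(\vec\kappa^\xi,\omega)$ is forced to be ${<}\kappa_\xi$-distributive. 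Let $W$ be the resulting generic extension. Each $\mathbb B_\xi$ therefore adds no subsets of $\delta_\xi$, while the blocks $\mathbb B_\eta$ with $\eta<\xi$ are small relative to $\delta_\xi$; so $\mathbb B$ preserves the regularity of every $\delta_\xi$ and the cofinality $\delta_\xi$ of every $\kappa_\xi$, whence each $\kappa_\xi$ is singular in $W$.

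Now fix $\xi\in\ord$ and verify the two assertions about $\delta_\xi$ in $W$. Since the support is Easton and the blocks occupy pairwise disjoint intervals, $\mathbb B$ factors as $\mathbb B\cong\mathbb B^{\mathrm{rest}}\times\mathbb B_\xi$, where $\mathbb B^{\mathrm{rest}}$ is the product of $\mathbb L$ with all $\mathbb B_\eta$ for $\eta\ne\xi$. Work in $U:=V^{\mathbb B^{\mathrm{rest}}}$. The part of $\mathbb B^{\mathrm{rest}}$ acting below $\delta_\xi$ has size $<\delta_\xi$, so by L\'evy--Solovay it preserves the measurability of $\delta_\xi$ and the Laver-indestructibility of every $\kappa^\xi_\alpha$; the part acting at or above $\delta_\xi$ is absorbed into a ${<}\kappa^\xi_0$-directed closed (hence $\delta_\xi$-closed) forcing, so it adds no subsets of $\delta_\xi$ (keeping $\delta_\xi$ measurable in $U$) and — after commuting it past the small part and combining it with block $\xi$'s own forcing — is, from the viewpoint of each $\kappa^\xi_\alpha$, a single ${<}\kappa^\xi_\alpha$-directed closed forcing, which by indestructibility leaves $\kappa^\xi_\alpha$ supercompact. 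Hence all the hypotheses used in the proof of Theorem~\ref{single} hold for $\delta_\xi$ and $\vec\kappa^\xi$ over $U$, taking $\lambda:=\delta_\xi$: the set-indexed Fact~\ref{thmintersection} still applies inside $U[\mathbb P(\vec\kappa^\xi,\omega)\text{-generic}]$ because $\mathbb P(\vec\kappa^\xi,\omega)*\dot{\mathbb Q}_0(\vec\kappa^\xi,\omega)$ remains ${<}\kappa^\xi_0$-directed closed (hence $\delta_\xi$-distributive) over $U$, and the argument of Theorem~\ref{single} — that is, Corollary~\ref{maincor1}(2) for the $\delta_\xi$-strong compactness of $\kappa_\xi$, and Lemma~\ref{killstrongcompactness} applied to the $\kappa^\xi_\alpha$-Aronszajn trees with $\omega$-ascent paths that persist in $W$, to rule out any smaller $\delta_\xi$-strongly compact cardinal — then goes through over $U$ to show that $U[\mathbb B_\xi\text{-generic}]=W$ is exactly the model produced there. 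Therefore $\delta_\xi$ is measurable in $W$ and $\kappa_\xi$ is the least $\delta_\xi$-strongly compact cardinal in $W$, and since $\xi$ was arbitrary, $W$ witnesses the theorem.

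The step I expect to be the main obstacle is precisely the non-interference bookkeeping underlying the last two paragraphs: designing the class-sized preparation $\mathbb L$ so that it simultaneously indestructibilizes every sequence $\vec\kappa^\xi$ yet adds no new subsets of any $\delta_\xi$ (which forces one to confine each $\mathbb L_\xi$ to cardinals sufficiently above $\delta_\xi$), and choosing the $\delta_\xi$'s and the Easton supports so that, relative to each $\delta_\xi$, the ``rest'' forcing splits into a piece of size $<\delta_\xi$ and a ${<}\delta_\xi$-closed piece, with the highly closed factors commutable so as to be applied together with block $\xi$ while preserving the indestructibility used in Theorem~\ref{single}. Once this separation is secured, each block is a black-box invocation of Theorem~\ref{single}, and no genuinely new forcing-theoretic idea beyond Section~\ref{sec5} is required.
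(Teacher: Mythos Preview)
Your proposal has a genuine gap: using $\omega$-ascent paths for \emph{every} block $\mathbb B_\xi$ is fatal to the conclusion. Fix $\xi$ and any $\xi'>\xi$. Block $\mathbb B_{\xi'}=\mathbb P(\vec\kappa^{\xi'},\omega)*\dot{\mathbb Q}_{\delta_{\xi'}}(\vec\kappa^{\xi'},\omega)$ adds, for each $\alpha<\delta_{\xi'}$, a $\kappa^{\xi'}_\alpha$-Aronszajn tree with an $\omega$-ascent path, and exactly the argument you invoke (${<}\kappa^{\xi'}_0$-distributivity of the remaining factors) shows these trees persist into the final model $W$. Now apply Lemma~\ref{killstrongcompactness} at $\kappa^{\xi'}_0>\kappa_\xi$: every uniform ultrafilter over $\kappa^{\xi'}_0$ in $W$ is $\omega$-decomposable, hence not $\omega_1$-complete, hence not $\delta_\xi$-complete. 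By Fact~\ref{usuba} (with $\mu=\kappa^{\xi'}_0\ge\kappa_\xi$), $\kappa_\xi$ fails to be $\delta_\xi$-strongly compact in $W$. The same obstruction shows your intermediate claim that $\kappa^\xi_\alpha$ is supercompact in $U[G][H_\alpha]$ is false: the later blocks already live in $U$, and the trees they introduce survive into $U[G][H_\alpha]$. Relatedly, your assertion that the upper part of $\mathbb B^{\mathrm{rest}}$ is ${<}\kappa^\xi_0$-directed closed is not correct for the $\omega$-ascent-path blocks: $\mathbb P(\vec\kappa^{\xi'},\omega)*\dot{\mathbb Q}_{\delta_{\xi'}}$ is not highly closed (only the $\mathbb Q^f$ or $\mathbb Q_\alpha$ versions composed with $\mathbb P$ enjoy the closure you need, and the direct limit does not).

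The paper repairs exactly this point by letting the ascent-path width grow from block to block: for $\delta\in\Delta$ it uses $\theta_\delta:=(\sup_{\gamma\in\Delta\cap\delta}\kappa_\gamma)^+$ and forces with $\mathbb P(\kappa_\alpha,\theta_\delta)$ rather than $\mathbb P(\kappa_\alpha,\omega)$. Then for any later block $\delta'>\delta$ one has $\theta_{\delta'}>\kappa=\sup_{\alpha<\delta}\kappa_\alpha>\delta$, so the trees added by block $\delta'$ only force ultrafilters over the relevant cardinals to be $\theta_{\delta'}$-decomposable, which is fully compatible with $\delta$-completeness. This choice also makes each $\mathbb P(\kappa_\alpha,\theta_{\delta'})$ (and hence $\mathbb P_{>\delta}$) genuinely ${<}\kappa$-directed closed, so the Laver indestructibility of the $\kappa_\alpha$'s survives and the black-box appeal to Theorem~\ref{single} goes through. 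Your overall architecture (Easton product of disjoint blocks, small-below/closed-above factoring, L\'evy--Solovay for the tail) is right; the missing idea is that the ascent-path parameter must increase with the block.
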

\begin{proof}
Suppose $\vec{\kappa}:=\langle\kappa_\alpha \mid \alpha \in \ord \rangle$
is an increasing and discrete sequence of supercompact cardinals.
By performing a preparatory forcing if necessary,
we may assume that the supercompactness of $\kappa_\alpha$ is indestructible
under ${<}\kappa_\alpha$-directed closed forcing for every $\alpha$.

By recursion,
we may take a proper class $\Delta$ consisting of measurable cardinals such that
$\delta$ is the least measurable cardinal above $\sup_{\gamma \in \Delta \cap \delta}\kappa_{\gamma}$
for every $\delta \in \Delta$.
For every $\delta \in \Delta$,
let $\theta_\delta:=(\sup_{\gamma \in \Delta \cap \delta}\kappa_{\gamma})^+$.
Let
$$\mathbb{P}_\delta:=(\prod_{\theta_\delta<\alpha<\delta}\mathbb{P}(\kappa_\alpha,\theta_\delta))*\mathbb{\dot{Q}}_{\infty}(\vec{\kappa}\restriction \delta,\theta_\delta).$$
Let $\mathbb{P}$ be the Easton support product $\prod_{\delta \in \Delta}\mathbb{P}_\delta$.
Though $\mathbb{P}$ is a class forcing,
the standard Easton argument shows $V^{\mathbb{P}}\vDash \zfc$.

We claim that $V^{\mathbb{P}}$ is our desired model in which
$\sup_{\alpha<\delta}\kappa_\alpha$ is the least $\delta$-strongly compact cardinal for every $\delta \in \Delta$.
To see this,
take any $\delta \in \Delta$. Let $\kappa:=\sup_{\alpha<\delta}\kappa_\alpha$.
Note that the Easton support product $\mathbb{P}_{>\delta}:=\prod_{\gamma \in \Delta\setminus (\delta+1)}\mathbb{P}_{\gamma}$
is ${<}\kappa$-directed closed,
it follows that in $V^{\mathbb{P}_{>\delta}}$,
for each $\alpha$ with $\theta_\delta \leq \alpha<\delta$,
$\kappa_\alpha$ is supercompact and indestructible under any ${<}\kappa_\alpha$-direct closed forcing.
Thus $\kappa$ is the least $\delta$-strongly compact cardinal
in $V^{\mathbb{P}_{>\delta}\times \mathbb{P}_\delta}$ by the argument in Theorem \ref{single}.
Note also that $\mathbb{P}_{<\delta}:=\prod_{\gamma \in \Delta \cap \delta}\mathbb{P}_{\gamma}$ is a small forcing of size less than $\delta$,
it follows that $\kappa$ is the least $\delta$-strongly compact cardinal in $V^{\mathbb{P}}=V^{\mathbb{P}_{>\delta}\times \mathbb{P}_\delta \times \mathbb{P}_{<\delta}}$.
\end{proof}

Next we deal with another case
that all $\delta$'s are below $\kappa_0$.
We will use a similar forcing as before.
The difference is that the argument relies also on the second commutative projection system with index set $\I(\delta,\theta)$.

\begin{thm}\label{thm6.9}
Suppose $\Delta$ is a discrete set of measurable cardinals
such that there is a $\delta$-complete uniform ultrafilter over $\prod_{\delta' \in \Delta \setminus (\delta+1)}\I(\delta',(\sup(\Delta \cap \delta'))^+)$ for every $\delta \in \Delta$,
and $\vec{\kappa}:=\langle \kappa_\alpha \mid \alpha<\sup(\Delta) \rangle$ is an increasing sequence of supercompact cardinals with $\kappa_0>\sup(\Delta)$.
Then there exists a model in which the singular cardinal $\sup_{\alpha<\delta}\kappa_{\delta}$ is the least $\delta$-strongly compact cardinal for every $\delta \in \Delta$.
\end{thm}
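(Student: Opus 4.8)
The plan is to amalgamate the constructions behind Theorems~\ref{single} and \ref{classmany}, the decisive new ingredient being the commutative projection systems indexed by the posets $\I(\delta',\theta_{\delta'})$ and their products, as set up at the end of Section~\ref{sec3}. First I would fix the forcing: by Laver's theorem we may assume that the supercompactness of each $\kappa_\alpha$ is indestructible under ${<}\kappa_\alpha$-directed closed forcing; then, setting $\theta_\delta:=(\sup(\Delta\cap\delta))^+$ for $\delta\in\Delta$ — so that $\theta_\delta<\delta<\kappa_0$, since $\Delta$ is discrete and $\delta$ is a limit cardinal — I let
$$\mathbb P_\delta:=\Big(\prod_{\alpha<\delta}\mathbb P(\kappa_\alpha,\theta_\delta)\Big)*\dot{\mathbb Q}_\delta(\vec\kappa\restriction\delta,\theta_\delta),\qquad \mathbb P:=\prod_{\delta\in\Delta}\mathbb P_\delta$$
with full support. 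A routine Easton-style analysis, as in Theorem~\ref{classmany}, shows $V^{\mathbb P}\models\zfc$ with all cardinals and cofinalities preserved. Fixing $\delta\in\Delta$, factoring $\mathbb P=\mathbb P_{<\delta}\times\mathbb P_\delta\times\mathbb P_{>\delta}$ and putting $\kappa:=\sup_{\alpha<\delta}\kappa_\alpha$, I note that $\cf(\kappa)=\cf(\delta)=\delta<\kappa$ (as $\kappa_0>\sup(\Delta)>\delta$ and $\vec\kappa$ is increasing), so $\kappa$ is singular; it remains to show that $\kappa$ is the least $\delta$-strongly compact cardinal of $V^{\mathbb P}$.

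For the lower bound I would argue that $\mathbb P(\kappa_\alpha,\theta_\delta)$ adds a $\kappa_\alpha$-Aronszajn tree carrying a $\theta_\delta$-ascent path which, for all sufficiently large $\alpha<\delta$, survives together with its ascent path into $V^{\mathbb P}$: relative to such a $\kappa_\alpha$, the remainder of $\mathbb P$ is the product of a forcing of size ${<}\kappa_\alpha$ (the coordinates mentioning some $\kappa_\gamma$ with $\gamma<\alpha$, and all of $\mathbb P_{<\delta}$) with a ${<}\kappa_\alpha$-distributive forcing (the coordinates mentioning $\kappa_\gamma$ with $\gamma\ge\alpha$, together with the threading forcings $\mathbb Q_{\delta'}$, which are ${<}\kappa_\alpha$-distributive as $\alpha<\delta\le\delta'$), and neither factor adds a cofinal branch. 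Hence, were some $\mu<\kappa$ to be $\delta$-strongly compact in $V^{\mathbb P}$, one could choose such an $\alpha$ with $\kappa_\alpha\ge\mu$ and obtain, via Fact~\ref{usuba}, a $\delta$-complete — hence, as $\theta_\delta<\delta$, a $\theta_\delta$-indecomposable — uniform ultrafilter over $\kappa_\alpha$, contradicting Lemma~\ref{killstrongcompactness}.

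For the positive direction I would realize $V^{\mathbb P}$ as an intersection model and apply Theorem~\ref{menas-sheard}/Corollary~\ref{maincor1}\eqref{maincor3}. Working over $N:=V^{\mathbb P_{<\delta}\times\mathbb P_{>\delta}\times\prod_{\alpha<\delta}\mathbb P(\kappa_\alpha,\theta_\delta)}$, the $\delta$-indexed system $\langle\mathbb Q_\alpha(\vec\kappa\restriction\delta,\theta_\delta)\mid\alpha<\delta\rangle$ has direct limit $\mathbb Q_\delta$ with $\mathbb Q_0$ being $\delta$-distributive over $N$, so by Fact~\ref{thmintersection} $V^{\mathbb P}=N[\mathbb Q_\delta\text{-generic}]=\bigcap_{\alpha<\delta}N[H_\alpha]$; and $N[H_\alpha]$, obtained from $V^{\mathbb P_{<\delta}\times\mathbb P_{>\delta}}$ by $\mathbb P(\vec\kappa\restriction\delta,\theta_\delta)*\dot{\mathbb Q}_\alpha$ — which, as in Theorem~\ref{single}, has a dense subset of the form $(\text{size }{<}\kappa_\alpha)\times({<}\kappa_\alpha\text{-directed closed})$ — preserves supercompactness. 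The task thus reduces to seeing that each $\kappa_\alpha$ ($\alpha<\delta$) remains supercompact in $V^{\mathbb P_{<\delta}\times\mathbb P_{>\delta}}$. Since $|\mathbb P_{<\delta}|<\kappa$, this factor is harmless for $\alpha$ in a cofinal subset of $\delta$ — which is all one needs, running Theorem~\ref{menas-sheard} along that cofinal set with a $\delta$-complete weakly normal measure on $\delta$ — and the supercompactness of those $\kappa_\alpha$ in $V^{\mathbb P_{>\delta}}$ is where the hypothesis on $\prod_{\delta'\in\Delta\setminus(\delta+1)}\I(\delta',\theta_{\delta'})$ enters.

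The hard part will be precisely this last step: unlike in Theorem~\ref{classmany}, here $\mathbb P_{>\delta}$ is not ${<}\kappa$-directed closed — each $\mathbb Q_{\delta'}$, standing alone, is only ${<}\theta_{\delta'}$-directed closed — so the relevant preservation is not termwise. I would handle it by presenting each $\mathbb Q_{\delta'}$ ($\delta'>\delta$) as the direct limit of the $\I(\delta',\theta_{\delta'})$-indexed system $\langle\mathbb Q^f_{\delta'}\mid f\in\I(\delta',\theta_{\delta'})\rangle$, for which $\mathbb P(\vec\kappa\restriction\delta',\theta_{\delta'})*\dot{\mathbb Q}^f_{\delta'}\cong\prod_{\gamma<\delta'}\add(\kappa_\gamma,1)$ is ${<}\kappa_0$-directed closed, and then forming the product of these systems over $\delta'>\delta$ as in Section~\ref{sec3}: this produces a commutative projection system with directed index poset $\I:=\prod_{\delta'>\delta}\I(\delta',\theta_{\delta'})$ whose direct limit computes the threading part of $\mathbb P_{>\delta}$, and along whose intersection $V^{\mathbb P_{>\delta}}=\bigcap_{F\in\I}M_F$ every coordinate extension $M_F$ arises from a ${<}\kappa_0$-directed closed product of Cohen forcings and hence keeps each $\kappa_\alpha$ supercompact. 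As $|\I|<\kappa_0$, the remaining highly distributive forcing does not move $\mathcal P(\I)$, so the hypothesized $\delta$-complete uniform ultrafilter $W$ on $\I$ plays the role of the fine ultrafilter in Theorem~\ref{menas-sheard}: the $W$-limit, over $F\in\I$, of the $\delta$-complete uniform ultrafilters on a prescribed regular $\nu\ge\kappa_\alpha$ witnessing the $\delta$-strong compactness of $\kappa_\alpha$ in $M_F$ is such an ultrafilter in $\bigcap_F M_F=V^{\mathbb P_{>\delta}}$, so $\kappa_\alpha$ is $\delta$-strongly compact there. Feeding this back through the preceding paragraph then yields that $\kappa$ is $\delta$-strongly compact in $V^{\mathbb P}$, which completes the plan.
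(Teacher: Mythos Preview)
Your plan has all the right ingredients, but the two-stage decomposition you describe does not compose, and this is a genuine gap rather than a matter of exposition.

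Concretely: you first realize $V^{\mathbb P}$ as $\bigcap_{\alpha<\delta}N[H_\alpha]$ and remark that the passage from $V^{\mathbb P_{<\delta}\times\mathbb P_{>\delta}}$ to $N[H_\alpha]$ is by a forcing of the shape $(\text{size }{<}\kappa_\alpha)\times({<}\kappa_\alpha\text{-directed closed})$, which ``preserves supercompactness''. You then reduce to showing that $\kappa_\alpha$ is supercompact in $V^{\mathbb P_{<\delta}\times\mathbb P_{>\delta}}$, but in the last paragraph you only obtain that $\kappa_\alpha$ is \emph{$\delta$-strongly compact} in $V^{\mathbb P_{>\delta}}$, via the $\I'$-indexed intersection and Theorem~\ref{menas-sheard}. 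This is strictly weaker: the Laver indestructibility you need for the passage to $N[H_\alpha]$ is a feature of supercompactness in $V$, and it does not survive $\mathbb P_{>\delta}$ (indeed $\mathbb P_{>\delta}$ already kills the supercompactness of $\kappa_\alpha$, since it introduces Aronszajn trees above it). Nor is there a general preservation of $\delta$-strong compactness of $\kappa_\alpha$ under ${<}\kappa_\alpha$-directed closed forcing. So ``feeding this back through the preceding paragraph'' does not go through as written.

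The paper's fix --- and the natural repair of your argument --- is to take the product of the two projection systems (the end of Section~\ref{sec3}): form the single commutative projection system
\[
\vec{\mathbb R}(\vec\kappa\restriction\delta,\theta_\delta)\times\prod_{\delta'\in\Delta\setminus(\delta+1)}\vec{\mathbb Q}(\delta'),
\]
indexed by $\I=\delta\times\prod_{\delta'>\delta}\I(\delta',\theta_{\delta'})$, whose direct limit is $\mathbb P^{\ge\delta}$. For an index $(\beta,F)\in\I$, the corresponding coordinate forcing $\mathbb P^{<\delta}\times\mathbb P'^{\delta}\times\mathbb P'^{>\delta}$ is now genuinely of the shape $(\text{size }{<}\kappa_\beta)\times({<}\kappa_\beta\text{-directed closed})$ \emph{over $V$}, so Laver indestructibility applies and $\kappa_\beta$ is supercompact there. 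A $\delta$-complete fine ultrafilter on $\I$ is obtained as the product of a normal measure on $\delta$ with the hypothesized $\delta$-complete ultrafilter on $\prod_{\delta'>\delta}\I(\delta',\theta_{\delta'})$; a single application of Theorem~\ref{menas-sheard} then yields that $\kappa$ is $\delta$-strongly compact in the intersection model $V^{\mathbb P}$. Equivalently, you could salvage your two-step picture by running the $\I'$-indexed intersection on $N[H_\alpha]$ itself (rather than on $V^{\mathbb P_{>\delta}}$), so that the coordinate models are directed-closed-over-$V$ extensions where $\kappa_\alpha$ is supercompact; but this is just the one-step product written in nested form.
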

\begin{proof}
By performing a preparatory forcing if necessary,
we may assume that the supercompactness of $\kappa_\alpha$ is indestructible
under any ${<}\kappa_\alpha$-directed closed forcing for every $\alpha<\sup(\Delta)$.
Let $\theta_\delta:=(\sup(\Delta \cap \delta))^+$ for each $\delta \in \Delta$.
For each $\delta \in \Delta$, let
$$\mathbb P_\delta:=(\prod_{\theta_\delta \leq \alpha<\delta}\mathbb{P}(\kappa_\alpha,\theta_\delta))*\mathbb{\dot{Q}}_{\infty}(\vec{\kappa}\restriction \delta,\theta_\delta).$$
Let $\mathbb{P}$ be the full product $\prod_{\delta \in \Delta}\mathbb P_\delta$.
Then for every $\delta \in \Delta$,
$$\mathbb{\vec{Q}}(\delta):=\langle (\prod_{\theta_\delta \leq \alpha<\delta}\mathbb{P}(\kappa_\alpha,\theta_\delta))*\mathbb{\dot{Q}}^{f}(\vec{\kappa}\restriction \delta,\theta_\delta) \mid f \in \I(\delta,\theta_\delta) \rangle$$
is a commutative projection system
such that each forcing $\mathbb{P}(\kappa_\alpha,\theta_\delta))*\mathbb{\dot{Q}}^{f}(\vec{\kappa}\restriction \delta,\theta_\delta)$
in it is ${<}\kappa_{\theta_\delta}$-directed closed
and $\mathbb P_\delta$ is its direct limit forcing.

Now we show $V^{\mathbb{P}}$ is our desired model in which the singular cardinal $\sup_{\alpha<\delta}\kappa_{\delta}$ is the least $\delta$-strongly compact cardinal for every $\delta \in \Delta$.
To see this, take any $\delta \in \Delta$. Let $\kappa=\sup_{\alpha<\delta}\kappa_\alpha$.
Let $\mathbb{P}^{{<}\delta}:=\prod_{\delta' \in \Delta \cap \delta}\mathbb P_{\delta'}$,
let $\mathbb{P}^{{\geq}\delta}:=\prod_{\delta' \in \Delta \setminus \delta}\mathbb P_{\delta'}$ and let
$$\I:=\delta \times \prod_{\delta' \in \Delta \setminus (\delta+1)} \I(\delta',\theta_{\delta'}).$$
Then $|\I|<\kappa_0$ by our assumptions that $\kappa_0>\sup(\Delta)$ is inaccessible.
Let
$$\mathbb{\vec{R}}(\vec{\kappa}\restriction \delta,\theta_\delta):=\langle (\prod_{\theta_\delta \leq \alpha<\delta}\mathbb{P}(\kappa_\alpha,\theta_\delta))*\mathbb{\dot{Q}}_{\alpha}(\vec{\kappa}\restriction \delta,\theta_\delta)\mid \alpha<\delta \rangle.$$
Then its direct limit forcing is $\mathbb P_\delta$.

Now consider the product commutative projection system
$$\mathbb{\vec{R}}(\vec{\kappa}\restriction \delta,\theta_\delta)\times \prod_{\delta' \in \Delta \setminus (\delta+1)}\mathbb{\vec{Q}}(\delta').$$
Its direct limit forcing is $\mathbb{P}^{{\geq}\delta}$.
Take any forcing, say $\mathbb{P}'^\delta\times \mathbb{P}'^{{>}\delta}$, in the product commutative projection system.
Here $\mathbb{P}'^{\delta}$ is the forcing $(\prod_{\theta_\delta \leq \alpha<\delta}\mathbb{P}(\kappa_\alpha,\theta_\delta))*\mathbb{\dot{Q}}_{\beta}(\vec{\kappa}\restriction \delta,\theta_\delta)$
for some $\beta$ with $\theta_\delta\leq \beta<\delta$
and $\mathbb{P}'^{{>}\delta}$ is a forcing in $\prod_{\delta' \in \Delta \setminus (\delta+1)}\mathbb{\vec{Q}}(\delta')$.
Note that forcing the $\mathbb{P}^{{>}\delta}$ is ${<}(2^{\kappa_\delta})^+$-directed closed,
a similar argument as in Theorem \ref{classmany} shows that
$\kappa_{\alpha}$ is supercompact in $V^{\mathbb{P}^{{<}\delta}\times \mathbb{P}'^\delta\times \mathbb{P}'^{{>}\delta}}$.

Since there is a $\delta$-complete uniform ultrafilter over $\I$ by our assumption,
by the $\kappa_0$-distributivity of $\mathbb{P}'^\delta\times \mathbb{P}'^{{>}\delta}$,
it follows by Theorem~\ref{menas-sheard} that $\kappa$ is $\delta$-strongly compact in the direct limit forcing extension $V^{\mathbb{P}}$.
Note also that there is a $\kappa_\alpha$-Souslin tree with a $\theta_\delta$-ascent path for every $\alpha \in [\theta_\delta,\delta)$,
it follows that $\kappa$ is the least $\delta$-strongly compact in $V^{\mathbb{P}}$.
\end{proof}

\section{The $C$-sequence number}\label{sec6}
In \cite{paper35}, Lambie-Hanson and Rinot
introduced a new cardinal characteristic $\chi(\kappa)$ to measure how far a cardinal $\kappa$ is from being weakly compact, as follows.
\begin{defn}[The $C$-sequence number of $\kappa$] \label{c-seq_num_def}
If $\kappa$ is weakly compact, then let $\chi(\kappa):=0$. Otherwise, let
$\chi(\kappa)$ denote the least (finite or infinite) cardinal $\chi\le\kappa$
such that, for every $C$-sequence $\langle C_\beta\mid\beta<\kappa\rangle$,\footnote{Recall that $\langle C_\beta\mid\beta<\kappa\rangle$
is a \emph{$C$-sequence} iff for every $\beta<\kappa$, $C_\beta$ is a closed subset of $\beta$ satisfying $\sup(C_\beta)=\sup(\beta)$.}
there exist $\Delta\in[\kappa]^\kappa$ and $b:\kappa\rightarrow[\kappa]^{\chi}$
with $\Delta\cap\alpha\s\bigcup_{\beta\in b(\alpha)}C_\beta$
for every $\alpha<\kappa$.
\end{defn}

By \cite[Corollary~2.6]{paper35}, if $\kappa$ is an inaccessible cardinal and $\square(\kappa)$ holds, then $\chi(\kappa)=\kappa$.
The main result of this section is Theorem~\ref{thm77} below,
asserting that for every weakly compact cardinal $\kappa$,
for every infinite cardinal $\delta<\kappa$,
there is a cofinality-preserving forcing extension in which $\chi(\kappa)=\delta$.

\subsection{Two building blocks}
Let us recall some content from Lambie-Hanson's \cite{HH17}, namely the indexed variation of $\square(\kappa,\delta)$ and the notion of forcing to add it by initial segments.
\begin{defn}
Let $\delta < \kappa$ be infinite regular cardinals. $\mathcal{C} = \langle C_{\alpha, i} \mid \alpha < \kappa, i(\alpha) \leq i < \delta \rangle$ is a $\square^{\mathrm{ind}}(\kappa, \delta)$-sequence iff all of the following hold:
\begin{enumerate}
\item For all $\alpha < \kappa$, $i(\alpha) < \delta$.
\item For all limit $\alpha < \kappa$ and $i(\alpha) \leq i < \delta$, $C_{\alpha, i}$ is club in $\alpha$.
\item For all limit $\alpha < \kappa$ and $i(\alpha) \leq i < j < \delta$, $C_{\alpha, i} \subseteq C_{\alpha, j}$.
\item For all limit $\alpha < \beta < \kappa$ and $i(\beta) \leq i < \delta$, if $\alpha \in \acc(C_{\beta, i})$, then $i(\alpha) \leq i$ and $C_{\beta, i} \cap \alpha = C_{\alpha, i}$.
\item For all limit $\alpha < \beta < \kappa$, there is $i(\beta) \leq i < \delta$ such that $\alpha \in \acc(C_{\beta, i})$.
\item There is no club $D \subseteq \kappa$ such that, for all $\alpha \in \acc(D)$, there is $i(\alpha) \leq i < \delta$ such that $D \cap \alpha = C_{\alpha, i}$.
\end{enumerate}

The principle $\square^{\mathrm{ind}}(\kappa, \delta)$ asserts that there is a $\square^{\mathrm{ind}}(\kappa, \delta)$-sequence.
\end{defn}

\begin{defn}\label{indexedsquaredef}
Suppose $\delta < \kappa$ are infinite regular cardinals. Let $\mathbb{S}(\kappa, \delta)$ be a forcing poset whose conditions are all $p = \langle C^p_{\alpha, i} \mid \alpha \leq \gamma^p, i(\alpha)^p \leq i < \delta \rangle$ satisfying the following conditions.
\begin{enumerate}
\item $\gamma^p < \kappa$ is a limit ordinal and, for all $\alpha \leq \gamma^p, i(\alpha)^p < \delta$.
\item For all limit $\alpha \leq \gamma^p$ and all $i(\alpha)^p \leq i < \delta$, $C^p_{\alpha, i}$ is a club in $\alpha$.
\item For all limit $\alpha \leq \gamma^p$ and all $i(\alpha)^p \leq i < j < \delta$, $C^p_{\alpha, i} \subseteq C^p_{\alpha, j}$.
\item For all limit $\alpha < \beta \leq \gamma^p$ and all $i(\beta)^p \leq i < \delta$, if $\alpha \in \acc(C^p_{\beta, i})$, then $i(\alpha)^p \leq i$ and $C^p_{\beta, i} \cap \alpha = C^p_{\alpha, i}$.
\item For all limit $\alpha < \beta \leq \gamma^p$, there is $i(\beta)^p \leq i < \delta$ such that $\alpha \in \acc(C^p_{\beta, i})$.
\end{enumerate}

For $p,q \in \mathbb{S}(\kappa, \delta)$, let $q \leq p$ iff $q$ end-extends $p$.
\end{defn}
Suppose $G \s \mathbb S(\kappa,\delta)$ is $V$-generic.
Then $\bigcup G$ is an $\square^{\mathrm{ind}}(\kappa, \delta)$-sequence,
which we shall denote by
$\mathcal{C}(G):=\langle C_{\alpha,i}\mid \alpha<\kappa,i(\alpha)\leq i<\delta \rangle$.

We now recall the forcing that threads $\mathcal{C}(G)$.

\begin{defn}
Let $\delta < \kappa$ a pair of infinite regular cardinals, and let $i < \delta$.
$\mathbb{T}_i(\mathcal{C}(G))$ is the forcing poset whose conditions are all $C_{\alpha, i}$ such that $\alpha < \kappa$ is a limit ordinal and $i(\alpha) \leq i$. $\mathbb{T}_i(\mathcal{C}(G))$ is ordered by end-extension.

Let $\mathbb{T}(\mathcal{C}(G))$ be the lottery sum of all $\mathbb{T}_i(\mathcal{C}(G))$ over all $i<\delta$.
\end{defn}
Clearly, $\langle \mathbb{T}_i(\mathcal{C}(G)),\pi_{i,j} \mid i\leq j<\delta \rangle$ is a commutative projection system
that is $\delta$-distributive and eventually trivial.
Here $\pi_{i,j}:\mathbb{T}_i(\mathcal{C}(G)) \rightarrow \mathbb{T}_j(\mathcal{C}(G))$
is the canonical projection for all $i\leq j<\delta$.

\begin{fact}[{\cite[Theorem~3.4]{paper35}}]\label{fact75}
Suppose that $\kappa$ is a weakly compact cardinal and $\delta$ is an infinite regular cardinal smaller than $\kappa$.
Then there is a cofinality-preserving forcing extension in which $\chi(\kappa)=\delta$.
\end{fact}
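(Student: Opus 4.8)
The plan is to treat this as the special case of Theorem~\ref{thmc} in which one adds an indexed square directly at the weakly compact cardinal $\kappa$ itself, together with a single threading forcing for each of its $\delta$ columns, and then to invoke the intersection-model machinery prepared above. Concretely, I would force with $\mathbb S(\kappa,\delta)$ over $V$ to obtain a generic $G$ and the $\square^{\mathrm{ind}}(\kappa,\delta)$-sequence $\mathcal C(G)=\langle C_{\alpha,i}\mid\alpha<\kappa,\ i(\alpha)\le i<\delta\rangle$, and then show that $\chi(\kappa)=\delta$ holds in $V[G]$. Since $\kappa$ is weakly compact it is inaccessible, so $\mathbb S(\kappa,\delta)$ has only $\kappa$ many conditions and hence is $\kappa^+$-cc, and it is ${<}\kappa$-strategically closed by \cite{HH17}; consequently it adds no bounded subset of $\kappa$, and the extension is cofinality-preserving.

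For the lower bound $\chi(\kappa)\ge\delta$ I would argue purely combinatorially from the principle $\square^{\mathrm{ind}}(\kappa,\delta)$: reading off from $\mathcal C(G)$ the $C$-sequence $\langle C_{\beta,i(\beta)}\mid\beta<\kappa\rangle$, one checks that no pair $(\Delta,b)$ with $\Delta\in[\kappa]^\kappa$ and $b\colon\kappa\to[\kappa]^{<\delta}$ can satisfy $\Delta\cap\alpha\s\bigcup_{\beta\in b(\alpha)}C_{\beta,i(\beta)}$ for every $\alpha<\kappa$, since otherwise the coherence clauses of a $\square^{\mathrm{ind}}(\kappa,\delta)$-sequence, together with a pressing-down argument, would produce a club of $\kappa$ threading $\mathcal C(G)$ at a single fixed index, contradicting the non-threadability clause; this is precisely \cite{paper35}.

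For the upper bound $\chi(\kappa)\le\delta$ I would pass, inside $V[G]$, to the commutative projection system $\langle\mathbb T_i(\mathcal C(G)),\pi_{i,j}\mid i\le j<\delta\rangle$, which is $\delta$-distributive and eventually trivial; fixing an $\mathbb T_0(\mathcal C(G))$-generic $H_0$ and letting $H_i$ be the filter generated by $\pi_{0,i}[H_0]$, Corollary~\ref{corV} yields $\bigcap_{i<\delta}V[G][H_i]=V[G]$. The crucial input is that for every $i<\delta$ the two-step iteration $\mathbb S(\kappa,\delta)*\dot{\mathbb T}_i(\mathcal C(G))$ is forcing equivalent to $\add(\kappa,1)$ --- by the same reasoning that identifies $\mathbb P(\kappa,\delta)*\mathbb T^\alpha(\kappa,\delta)$ with $\add(\kappa,1)$ --- and that $\add(\kappa,1)$ preserves the weak compactness of $\kappa$; hence $\mathbb T_i(\mathcal C(G))$ forces over $V[G]$ that $\kappa$ is weakly compact. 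Thus the hypotheses of the corollary in Section~\ref{treeascentpath} producing $\delta$-ascent paths (item~\eqref{ascentpath} there) are met, so in $V[G]$ every $\kappa$-Aronszajn tree carries a $\delta$-ascent path. Finally, given an arbitrary $C$-sequence $\vec C$ in $V[G]$, its derived tree $T(\vec C)$ --- whose $\alpha^{\text{th}}$ level is coded by the closed cofinal subsets of $\alpha$ all of whose accumulation points are resolved by $\vec C$ --- is a $\kappa$-tree since $\kappa$ is inaccessible; if it has a cofinal branch then $\vec C$ admits a covering with $b\colon\kappa\to[\kappa]^1$, and otherwise it is $\kappa$-Aronszajn and hence carries a $\delta$-ascent path, which by the dictionary of \cite{paper35} furnishes a covering with $b\colon\kappa\to[\kappa]^\delta$. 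Either way $\vec C$ is covered with width $\delta$, so $\chi(\kappa)\le\delta$, and together with the lower bound this gives $\chi(\kappa)=\delta$.

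I expect the real work to lie entirely in the upper bound, and within it in two points: verifying that $\mathbb S(\kappa,\delta)*\dot{\mathbb T}_i(\mathcal C(G))$ is (equivalent to) $\add(\kappa,1)$, equivalently that threading a single column of the indexed square resurrects the weak compactness of $\kappa$; and setting up the derived-tree-to-covering translation of \cite{paper35} carefully enough that a $\delta$-ascent path genuinely produces maps $b\colon\kappa\to[\kappa]^\delta$ matching Definition~\ref{c-seq_num_def}. The lower bound, by contrast, should be a routine consequence of the non-threadability clause.
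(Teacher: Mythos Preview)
The paper does not prove Fact~\ref{fact75}; it is quoted from \cite{paper35} without argument, so there is no proof here to compare against directly. Your outline is, however, close in spirit to the paper's Theorem~\ref{thm77}, which extends Fact~\ref{fact75} to singular $\delta$ using the same ingredients (the forcing $\mathbb S$, the threading system $\langle\mathbb T_i\rangle$, and the intersection-model framework), so that is the natural point of comparison.

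There is one genuine gap. You assert flatly that $\add(\kappa,1)$ preserves the weak compactness of $\kappa$, but this is not true in general: weak compactness can be destroyed by $\add(\kappa,1)$. The paper's proof of Theorem~\ref{thm77} opens with ``We may assume that the weak compactness of $\kappa$ is indestructible under $\add(\kappa,1)$,'' obtained via a standard preparatory iteration. Without that preparation your upper-bound argument collapses, since you cannot conclude that $\kappa$ is weakly compact in $V[G][H_i]$. The preparation is itself cofinality-preserving, so inserting it costs nothing, but it is an essential step you have omitted.

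A secondary point: once $\kappa$ is weakly compact (hence $\chi(\kappa)=0\le\delta$) in each $V[G][H_i]$, Lemma~\ref{maincor} gives $\chi(\kappa)\le\delta$ in $V[G]$ immediately. Your route through a ``derived tree'' and the ascent-path corollary of Section~\ref{treeascentpath} is much longer, and the object you describe is not a binary tree in the sense used there, so those results do not apply to it without additional work. Invoking Lemma~\ref{maincor} avoids this entirely.
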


We now give an abstract argument for the inequality $\chi(\kappa)\leq\delta$, without assuming that $\delta$ is regular.
\begin{lemma}\label{maincor}
Suppose:
\begin{itemize}
\item $\kappa$ is a regular cardinal,
\item $\I$ is a directed poset of some infinite cofinality $\delta<\kappa$,
\item $\vec{\mathbb P}:=\langle \mathbb{P}_i, \pi_{i,j} \mid i\leq_{\I} j \rangle$ is
an eventually trivial commutative projection system and $\mathbb{P}_0$ is $\delta$-distributive,
\item for every $i\in\I$, $\mathbb{P}_i$ forces that $\chi(\kappa)\le\delta$.
\end{itemize}
Then $\chi(\kappa)\leq \delta$.
\end{lemma}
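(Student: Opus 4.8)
First I would reduce to the case $|\I|=\delta$. Since $\I$ has cofinality $\delta$, fix a cofinal subset $\I'\subseteq\I$ of cardinality $\delta$ with $0\in\I'$. As order-preserving maps send compatible conditions to compatible conditions, the restriction $\langle\mathbb P_i,\pi_{i,j}\mid i,j\in\I',\ i\le_{\I'}j\rangle$ is again an eventually trivial commutative projection system, now over a directed poset of size $\delta$, with $\mathbb P_0$ still $\delta$-distributive; by Corollary~\ref{corV}, $V=\bigcap_{i\in\I'}V[G_i]$, and since $V[G_j]\subseteq V[G_i]$ whenever $i\le_{\I}j$ and $\I'$ is cofinal, in fact $V=\bigcap_{i\in\I}V[G_i]$. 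So assume $|\I|=\delta$. Each $\mathbb P_i$ is a projection of $\mathbb P_0$, so $V[G_i]\subseteq V[G_0]$, and since $\mathbb P_0$ is $\delta$-distributive, $V$ is closed under $\delta$-sequences in $V[G_0]$ and hence in every $V[G_i]$. In particular $[\kappa]^{\le\delta}$ — and therefore, for a fixed $C$-sequence $\vec C=\langle C_\beta\mid\beta<\kappa\rangle\in V$, the property of a set $S\subseteq\kappa$ of being contained in $\bigcup_{\beta\in x}C_\beta$ for some $x\in[\kappa]^{\le\delta}$ (call such an $S$ \emph{$\vec C$-thin}) — is computed the same way in $V$ and in each $V[G_i]$.

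Next I would harvest a witness out of each $V[G_i]$ and transport the relevant data into $V$. For every $i\in\I$, since $V[G_i]\models\chi(\kappa)\le\delta$, use the maximality principle to fix, in $V$, a $\mathbb P_i$-name $\langle\dot\Delta^i,\dot b^i\rangle$ forced to be a pair witnessing $\chi(\kappa)\le\delta$ for $\check{\vec C}$ — so in $V[G_i]$ the evaluation $\Delta^i$ lies in $[\kappa]^\kappa$ and $\Delta^i\cap\alpha\subseteq\bigcup_{\beta\in b^i(\alpha)}C_\beta$ with $b^i(\alpha)\in[\kappa]^{\le\delta}$. The sequences $\langle\dot\Delta^i\mid i\in\I\rangle$ and $\langle\dot b^i\mid i\in\I\rangle$ belong to $V$. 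Now work in $V[G_0]$. Since there are only $\delta$ many $i$'s, $b(\alpha):=\bigcup_{i\in\I}b^i(\alpha)$ is still in $[\kappa]^{\le\delta}$, and being a $\le\delta$-sized set of ordinals in the $\delta$-distributive extension $V[G_0]$, it lies in $V$. Moreover, writing $f_\gamma(i):=\Delta^i\cap\gamma$, the fact that $\Delta^i\in V[G_i]\subseteq V[G_j]$ for $i\ge_{\I}j$, together with $\langle\dot\Delta^i\rangle\in V$, shows that $\langle f_\gamma\restriction\{i\mid i\ge_{\I}j\}\mid\gamma<\kappa\rangle\in V[G_j]$ for every $j$; passing to the tail-equivalence classes $[f_\gamma]_E$ — where $f\mathrel E g$ iff $f$ and $g$ agree above some $j\in\I$ — exactly as in the proof (given above, after Corollary~\ref{twothreads}) that a $\kappa$-Aronszajn tree of the intersection model $V$ acquires a $\delta$-ascent path, each $[f_\gamma]_E$ is computable in every $V[G_j]$, whence $\langle[f_\gamma]_E\mid\gamma<\kappa\rangle\in\bigcap_{j}V[G_j]=V$. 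Finally set $\Delta:=\{\beta<\kappa\mid \{i\in\I\mid\beta\in\Delta^i\}\text{ is cofinal in }\I\}$; since ``$\beta\in\Delta$'' is an $E$-invariant property of $[f_{\beta+1}]_E$, we get $\Delta\in V$.

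It then remains to check that $(\Delta,b)$ — after padding $b$ so that each $b(\alpha)$ has cardinality exactly $\delta$ — witnesses $\chi(\kappa)\le\delta$ in $V$. The covering inequality is immediate: if $\beta\in\Delta\cap\alpha$, choose any $i$ with $\beta\in\Delta^i$; then $\beta\in\Delta^i\cap\alpha\subseteq\bigcup_{\gamma\in b^i(\alpha)}C_\gamma\subseteq\bigcup_{\gamma\in b(\alpha)}C_\gamma$. The genuinely delicate point — which I expect to be the main obstacle — is that $\Delta$ must be shown to lie in $[\kappa]^\kappa$, i.e.\ to be unbounded in $\kappa$: a priori the $\Delta^i$'s, chosen independently in different models, could be almost disjoint and make the ``cofinally-often'' set $\Delta$ bounded or even empty. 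Overcoming this requires choosing the names $\dot\Delta^i$ coherently, so that the $\kappa$-branches they determine through the tree of $\vec C$-thin initial segments stabilize along a club; this is exactly where the hypothesis $\cf(\kappa)>\delta=|\I|$ enters, by a stabilization argument in the spirit of Lemma~\ref{rigid}, and the stabilized branch then supplies an unbounded good set in $V$. The first two steps are routine given the machinery already developed; the coherence-and-stabilization bookkeeping of the last step is the heart of the proof.
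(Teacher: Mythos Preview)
Your plan has a genuine gap at exactly the point you flag: you never actually establish that $\Delta$ is unbounded, and the ``stabilization in the spirit of Lemma~\ref{rigid}'' you gesture at does not apply here. Lemma~\ref{rigid} concerns threads through a $D$-ascent path in a $\kappa$-tree, where the levels are small and two threads can be compared node-by-node. Your ``tree of $\vec C$-thin initial segments'' has levels of size $2^{|\alpha|}$, there is no ascent path in sight, and nothing forces two independently chosen $\Delta^i$, $\Delta^{i'}$ to cohere on any tail. Without a further idea, the sets $\Delta^i$ can be almost disjoint and your $\Delta$ can be empty. (There is also a secondary wrinkle: you show each value $b(\alpha)$ lies in $V$, but not that the \emph{function} $\alpha\mapsto b(\alpha)$ does; the tail sequences $\langle b^i\mid i\ge_{\I}j\rangle$ land in $V[G_j]$, but their unions over all $i$ need not agree across $j$.)

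The paper avoids all of this by switching to a different witness for $\chi(\kappa)\le\delta$. By \cite[Lemma~2.1]{paper35}, in each $V[G_i]$ there is a \emph{club} $D_i\subseteq\kappa$ such that every proper initial segment of $D_i\cap E^\kappa_{>\delta}$ is covered by $\le\delta$ many $C_\beta$'s. Clubs behave well under both intersection and union: replacing $D_i$ by $\bigcap_{k\ge_{\I}i}D_k$ (an intersection of $\le\delta<\kappa$ many clubs, hence still club, and computed inside $V[G_i]$) makes $i\mapsto D_i$ $\subseteq$-increasing along $\I$; then $D:=\bigcup_{i\in\I}D_i$ equals $\bigcup_{i\ge_{\I}j}D_i$ for every $j$, so $D\in\bigcap_jV[G_j]=V$. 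Unboundedness of $D$ is now free --- it contains a club --- and each $D\cap\alpha\cap E^\kappa_{>\delta}$ is a union of $\le\delta$ sets each covered by $\le\delta$ many $C_\beta$'s, so is itself covered by $\le\delta$ many (and the $\le\delta$-sized covering set, being a set of ordinals in $V[G_0]$, lies in $V$ by distributivity). The club reformulation is precisely what supplies the coherence you were looking for.
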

\begin{proof} Without loss of generality, we may assume that $|\I|=\delta$.
By \cite[Lemma~2.2(2)]{paper35}, we may also assume that $\kappa$ is not the successor of a regular cardinal, so that $\delta^+<\kappa$.
Take any $C$-sequence $\vec{C}=\langle C_\beta\mid \beta<\kappa\rangle$.
Assuming that for every $i\in\I$, $\mathbb{P}_i$ forces that $\chi(\vec C)\le\delta$,
so by \cite[Lemma~2.1]{paper35}, in $V^{\mathbb P_i}$, there exists a club $D_i\s\kappa$ such that, for every $\alpha<\kappa$,
$D_i\cap (E^\alpha_{>\delta})^V$ is covered by the union of no more than $\delta$ many elements of $\vec{C}$.
Note that we may assume that $D_i \s D_j$ whenever $i<_{\I}j$, because we could replace $D_i$ by $\bigcap_{k\ge_{\I}i}D_k$.
Thus $D:=\bigcup_{i \in \I}D_i$ is in $\bigcap_{i \in \I} V[G_i]=V$
and it is the case that $D\cap (E^\kappa_{>\delta})^V$ is a cofinal subset of $\kappa$ all of whose proper initial segments are covered by the union of no more than $\delta$ many elements of $\vec{C}$.
\end{proof}

\subsection{Application}
Our goal here is to extend Fact~\ref{fact75}, proving that $\chi(\kappa)$ can be an arbitrary infinite cardinal $\delta<\kappa$.
To this end, instead of adding a $\square^{\ind}(\kappa,\delta)$-sequence (as done in the proof of \cite[Theorem~3.4]{paper35}),
we shall be adding $\delta$-many $\square^{\ind}(\kappa,\omega)$-sequences,
and show that $\chi(\kappa)$ may be guaranteed to be $\delta$. This yields Theorem~\ref{thmc}, as follows.

\begin{thm}\label{thm77}
Suppose $\kappa$ is a weakly compact cardinal,
and $\delta<\kappa$ is an infinite cardinal.
Then there exists a forcing extension in which $\chi(\kappa)=\delta$.
\end{thm}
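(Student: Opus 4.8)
The plan is to force over $V$ (where, after a routine preparatory forcing, we may assume $\gch$ holds at and above $\kappa$) with a poset $\mathbb{S}_\delta$ that adds a suitably linked family $\langle\mathcal{C}^\xi\mid\xi<\delta\rangle$ of $\square^{\mathrm{ind}}(\kappa,\omega)$-sequences on $\kappa$, together with the apparatus needed to thread them. Each coordinate of $\mathbb{S}_\delta$ is a variant of Lambie-Hanson's $\mathbb{S}(\kappa,\omega)$, so $\mathbb{S}_\delta$ will be ${<}\kappa$-closed, and it will be of size $\kappa$ (using $\kappa^{<\kappa}=\kappa$ and $\delta<\kappa$), hence $\kappa^+$-cc. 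Consequently $\mathbb{S}_\delta$ adds no new ${<}\kappa$-sequences — so all cardinals ${\le}\kappa$, and the inaccessibility of $\kappa$, are preserved — while it also preserves all cardinals ${\ge}\kappa^+$ and all cofinalities, so the extension is cofinality-preserving. The design requirement that matters is that, for each relevant assignment of threading levels to the coordinates, the product of the corresponding posets $\mathbb{T}_n(\mathcal{C}^\xi)$ composes with $\mathbb{S}_\delta$ into a forcing that, on the threaded coordinates, collapses to an $\add(\kappa,1)$-like forcing (in the spirit of \cite{Git20}, \cite[Fact~2.11]{YY23b} and the proof of \cite[Theorem~3.4]{paper35}); in particular, threading all $\delta$ coordinates at once yields an $\add(\kappa,\delta)$-like extension, which under $\gch$ preserves the weak compactness of $\kappa$.

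Write $V[G]$ for the extension of $V$ by $\mathbb{S}_\delta$. For the lower bound $\chi(\kappa)\ge\delta$ in $V[G]$, I would interleave the clubs $C^\xi_{\alpha,i}$ into a single $C$-sequence $\vec D=\langle D_\beta\mid\beta<\kappa\rangle$ via a fixed bijection $\kappa\leftrightarrow\kappa\times\delta\times\omega$, and then verify that no $\Delta\in[\kappa]^\kappa$ and $b\colon\kappa\to[\kappa]^{<\delta}$ can satisfy $\Delta\cap\alpha\s\bigcup_{\beta\in b(\alpha)}D_\beta$ for every $\alpha<\kappa$. By \cite[Lemma~2.1]{paper35} it is enough to rule out a club $E\s\kappa$ all of whose proper initial segments are covered by fewer than $\delta$ many $D_\beta$'s; a pressing-down and counting argument on the $\xi$- and $i$-components of such a cover then isolates, on a club, a single coordinate $\xi^\ast$ relative to which — once the coherence built into $\mathcal{C}^{\xi^\ast}$ is invoked — $E$ becomes a club violating clause~(6) of $\square^{\mathrm{ind}}(\kappa,\omega)$. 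This parallels the known argument that $\square^{\mathrm{ind}}(\kappa,\delta)$ precludes $\chi(\kappa)<\delta$; the interplay between the $\delta$-indexing of the family and the $\omega$-indexing inside each $\mathcal{C}^\xi$ is the point to be written out in detail.

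For the upper bound $\chi(\kappa)\le\delta$ in $V[G]$, the intersection-model machinery takes over. I would first show, by induction on $|s|$ among finite $s\s\delta$, that $\chi(\kappa)\le\omega$ in the model obtained from $V[G]$ by threading $\mathcal{C}^\xi$ for every $\xi\in\delta\setminus s$. The base case $s=\emptyset$ holds because there $\kappa$ is weakly compact, so $\chi(\kappa)=0$. For the inductive step, fix $\xi_0\in s$ and apply Lemma~\ref{maincor}, over the relevant model, to the eventually trivial, $\omega$-distributive commutative projection system $\langle\mathbb{T}_n(\mathcal{C}^{\xi_0})\mid n<\omega\rangle$; each of its one-step extensions is the model threaded outside $s\setminus\{\xi_0\}$, which satisfies $\chi(\kappa)\le\omega$ by the induction hypothesis. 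With this claim in hand, I would apply Lemma~\ref{maincor} one last time, in $V[G]$, to a commutative projection system $\langle\mathbb{P}_i,\pi_{i,j}\mid i\le_{\I}j\rangle$ whose index poset $\I$ has cofinality exactly $\delta$ and whose nodes record finitely (equivalently, boundedly) many threading levels for the $\delta$ coordinates, with $\mathbb{P}_i$ the associated product of threading posets: its $\mathbb{P}_0$ is $\delta$-distributive by ${<}\kappa$-closure, it is eventually trivial because each single-coordinate threading system is, and each $\mathbb{P}_i$ forces $\chi(\kappa)\le\omega\le\delta$ by the finite-$s$ claim applied in $V[G][G_i]$. Lemma~\ref{maincor} then yields $\chi(\kappa)\le\delta$ in $V[G]$, and with the lower bound we conclude $\chi(\kappa)=\delta$.

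The step I expect to be the main obstacle is making this last commutative projection system legitimate. A plain ${<}\kappa$-support product of $\delta$ copies of $\mathbb{S}(\kappa,\omega)$ will not suffice: threading $\delta$ mutually independent coordinates forces the natural index poset to have cofinality strictly above $\delta$, and equivalently the resulting threading system fails to be eventually trivial over any poset of cofinality $\delta$ — a level-$0$ thread through each separate $\mathcal{C}^\xi$ would otherwise have to exist, contradicting clause~(6). Hence the family $\langle\mathcal{C}^\xi\mid\xi<\delta\rangle$ must be added in a coordinated way, linked tightly enough that only boundedly many coordinates are ever relevant to compatibility in the threading system (so that $\I$ can be taken of cofinality $\delta$), yet loosely enough that the diagonal $C$-sequence $\vec D$ still defeats every cover by fewer than $\delta$ clubs per point. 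Engineering exactly this coherence among the $\delta$-many $\square^{\mathrm{ind}}(\kappa,\omega)$-sequences is the crux of the argument.
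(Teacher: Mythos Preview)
Your framework matches the paper's, and you have correctly located the crux. The paper's coherence condition is this: $\mathbb{S}$ consists of those $p\in\prod_{\xi<\delta}\mathbb{S}(\kappa,\omega)$ with a common top ordinal $\gamma^p:=\gamma^{p(\xi)}$ across all coordinates and such that, for every $\alpha\le\gamma^p$, the tuple $\langle i(\alpha)^{p(\xi)}\mid\xi<\delta\rangle$ has only finitely many nonzero entries. The index poset is then exactly $\I=\{f\in{}^\delta\omega\mid f\text{ has finite support}\}$ under everywhere domination, so that $\cf(\I)=\delta$, and $\mathbb{T}_f:=\prod_{\xi<\delta}\mathbb{T}_{f(\xi)}(\mathcal{C}^\xi)$ for $f\in\I$ gives the eventually trivial system you were looking for --- eventual triviality holding precisely because of the finite-support constraint on the index tuples.

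Two of your surrounding steps differ from the paper and are worth adjusting. For the upper bound, your induction on finite $s$ is superfluous: each $\mathbb{T}_f$ threads \emph{all} coordinates, so under the indestructibility hypothesis $\kappa$ is weakly compact in $V[G]^{\mathbb{T}_f}$, and a single application of Lemma~\ref{maincor} gives $\chi(\kappa)\le\cf(\I)=\delta$. (If instead you intended $\mathbb{P}_i$ to thread only the coordinates in the finite support of $i$, note that $V[G][G_i]$ then has co-finitely many coordinates \emph{un}threaded, which is the opposite of the situation covered by your finite-$s$ claim.) For the lower bound, the paper's witness is not an interleaving but the intersection $C_\alpha:=\bigcap_{\xi<\delta}C^\xi_{\alpha,i^\xi(\alpha)}$. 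This choice is what makes the reduction to a single coordinate work: since $C_\beta\subseteq C^\xi_{\beta,i^\xi(\beta)}$ for every $\xi$, a putative cover of size ${<}\delta$ together with the finite-support constraint yields a fixed $\xi$ with $i^\xi(\beta)=0$ for all $\beta$ ever used in the cover, and then a density argument in $\mathbb{S}$ (building a condition with $i^\xi(\gamma^q)=1$ at the top of a suitable $\theta^+$-tower) produces the contradiction. Your interleaving via a bijection does not obviously support such a reduction, since different points of $\Delta$ could then be covered using clubs drawn from different coordinates $\xi$.
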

\begin{proof}
We may assume that the weak compactness of $\kappa$ is indestructible under $\add(\kappa,1)$ (cf.~\cite{MR0495118}).
Consider the directed poset $\I$ with underlying set $$\{ f \in {}^\delta\omega \mid f \text{ has only finitely many nonzero points}\},$$
ordered by the everywhere-dominance ordering. Clearly, $\cf(\I)=\delta$.

Let $\mathbb{S}\s \prod_{\xi<\delta}\mathbb{S}(\kappa,\omega)$ be the poset consisting of all
$p \in \prod_{\xi<\delta}\mathbb{S}(\kappa,\omega)$ with
$\gamma^p:=\gamma^{p(\xi)}$ for every $\xi<\delta$, and for every $\alpha\leq\gamma^p$,
$\langle i(\alpha)^{p(\xi)} \mid \xi<\delta \rangle\in \I$.

Let $G:=\prod_{\xi<\delta} G_\xi \s \mathbb{S}$
be $V$-generic.
For each $\xi<\delta$, let
$$\vec{C}^\xi:=\langle C^\xi_{\alpha,i}\mid i^\xi(\alpha)\leq i<\omega, \alpha<\kappa\rangle$$
be the $\square^{\ind}(\kappa,\delta)$-sequence given by $G_\xi$.
We claim that in $V[G]$, $\chi(\kappa)=\delta$.

Consider $\vec{C}:=\langle C_\alpha \mid \alpha<\kappa \rangle$, where $C_\alpha:=\bigcap_{\xi<\delta}C^\xi_{\alpha,i^\xi(\alpha)}$.
\begin{claim}
$\chi(\vec{C})\geq \delta$.
\end{claim}
\begin{proof}
Suppose not.
Then there exists a $\Delta \in [\kappa]^\kappa$ and
a $b:\kappa \rightarrow [\kappa]^{\theta}$ for some $\theta<\delta$
such that for every $\alpha<\kappa$, $\Delta \cap \alpha \s \bigcup_{\beta\in b(\alpha)}C_\beta$.
Note that for every $\alpha<\kappa$,
$|b(\alpha)|=\theta$ and for each $C_\beta$, there are only finitely many $\xi<\delta$
such that $i^\xi(\beta)\neq 0$,
it follows that
the set $B_\alpha:=\{ \xi<\delta \mid \exists \beta \in b(\alpha)(i^\xi(\beta)\neq 0) \}$ has size $\leq \theta$.
Note that $\kappa>\theta$ is inaccessible,
we may assume that there is a $B \s \delta$ of size $\leq \theta$
such that $B:=B_\alpha$ for every $\alpha<\kappa$.
Since $\theta<\delta$,
it follows that there exists an $\xi<\delta$ such that $\xi \notin B$.
This means that for every $\alpha<\kappa$,
$i^{\xi}(\beta)=0$ for every $\beta \in b(\alpha)$.

Now work in $V$. Let $\dot{\Delta}$ and $\dot{b}$ be the name of $\Delta$ and $b$, respectively.
Let $p$ force that $\dot{\Delta}$, $\dot{b}$ and $\check{\xi}$ with $\xi<\delta$ witness together the above fact.
We can recursively construct a decreasing sequence $\langle p_\alpha \mid \alpha<\theta^+\rangle$ satisfying the following requirements:
\begin{itemize}
\item $p_0=p$;
\item For all $\alpha<\theta^+$, there is a $\beta_\alpha$ with $\gamma^{p_\alpha}<\beta_{\alpha}<\gamma^{p_{\alpha+1}}$
such that $p_{\alpha+1}$ forces that $\beta_{\alpha} \in \Delta$;
\item For all $\alpha<\theta^+$, $i^{\xi}(\gamma^{p_\alpha})^{p_\alpha(\xi)}=1$,
and $i^{\eta}(\gamma^{p_\alpha})^{p_\alpha(\eta)}=0$ for every $\eta\neq \xi$.
\end{itemize}
Now define a condition $q$ by letting $\gamma^q:=\sup\{\gamma^{p_\alpha}\mid \alpha<\theta^+\}$,
$i^{\xi}(\gamma^{q})^{q(\xi)}=1$,
and $i^{\eta}(\gamma^{q})^{q(\eta)}=0$ for every $\eta \neq \xi$,
and for all $\eta<\delta$ and all $i^\eta(\gamma^q)\leq i<\omega$, $C^\eta_{\gamma^{q(\eta)},i}:=\bigcup_{\alpha<\theta^+}C^{p_\alpha(\eta)}_{\gamma^{p_\alpha(\eta)},i}$.
Then $q$ forces that $\Delta \cap \gamma^q$ is unbounded in $\gamma^q$.
However, for every $\beta \in b(\gamma^q)$, we have $i^{\xi}(\beta)=0$ since $q$ extends $p$,
it follows that
$C_{\beta}=\bigcap_{\eta<\delta}C^\eta_{\beta,i^\eta(\beta)} \cap \gamma^q \s C^\xi_{\beta,0}\cap \gamma^q$.
Since $i^\xi(\beta)=0<1=i^\xi(\gamma^q)$,
we have $\gamma^q \notin C^\xi_{\beta,0}$ by the definition of indexed square.
Thus $C_{\beta}\s C^\xi_{\beta,0}\cap \gamma^q$ is a bounded subset of $\gamma^q$.
So $\Delta \cap \gamma^q \s \bigcup_{\beta \in b(\gamma^q)}C_{\beta} \cap \gamma^q$ is a bounded subset of $\gamma^q$ since $\cf(\gamma^q)=\theta^+>\theta$,
a contradiction.
\end{proof}
For the other inequality, work in $V[G]$.
Let $\mathbb{T}_f:=\prod_{\eta<\delta}\mathbb{T}_{f(\eta)}(\vec{C}^\eta)$ for every $f \in \I$.
Then $\langle \mathbb{T}_f \mid f \in \I\rangle$ is an eventually trivial commutative projection system,
and so by Corollary~\ref{maincor}, $\chi(\kappa)\leq \cf(\I)=\delta$.
Thus $\chi(\kappa)=\delta$.
\end{proof}

We remark that the conclusion of Lemma~\ref{maincor} is optimal
in the sense that an actual equality need not hold. Indeed, a construction demonstrating this will appear in \cite{paper70}.

\section*{Acknowledgments}
The first and second authors were supported by the European Research Council (grant agreement ERC-2018-StG 802756)
and by the Israel Science Foundation (grant agreement 203/22).
The third author was supported by a UKRI Future Leaders Fellowship [MR/T021705/2].

Some of the results of this paper were presented by the third author
during the Set Theory Workshop at the Academy of Mathematics and Systems Science, Chinese Academy of Sciences,
on Set Theory Seminar in Beijing Normal University,
on Nankai Logic Colloquium, April 2024.
The results of this paper were presented by the second author
on Set Theory Seminars of BIU and of HUJI in May 2024,
and in a poster session at the \emph{120 Years of Choice} conference in Leeds, July 2024.
We thank the corresponding organizers for the opportunity to present this
work and the participants for stimulating discussions.

We are grateful to Tom Benhamou for detecting and informing us of an error in the previous version of Lemma~\ref{l59}.

\end{document}